 \newtheorem{thm}{Theorem}[section]
 \newtheorem{cor}[thm]{Corollary}
 \newtheorem{lem}[thm]{Lemma}
 \newtheorem{prop}[thm]{Proposition}
 \newtheorem{defn}[thm]{Definition}
 \newtheorem{ex}[thm]{Example}
 \newtheorem{rem}[thm]{Remark}
 \newcommand{\Hom}{\mathrm{Hom}}
\title{Local cohomology for Gorenstein homologically smooth DG algebras}
\author{X.-F. Mao}
\address{Department of Mathematics, Shanghai University, Shanghai 200444, China}
\email{xuefengmao@shu.edu.cn}
\author{H. Wang}
\address{Department of Mathematics, Shanghai University, Shanghai 200444, China}
\email{happywang97@shu.edu.cn}
\date{}
\subjclass[2010]{Primary 16E45, 16E65, 16W20,16W50}
\keywords{Gorenstein DG algebra, homologically smooth DG algebra, local cohomology, homological determinant}
\begin{document}

\maketitle \def\abstactname{abstact}
\begin{abstract}
In this paper, we introduce the theory of local cohomology and local duality to Notherian connected cochain DG algebras.
 We show that the notion of local
cohomology functor can be used to detect the Gorensteinness of a homologically smooth DG algebra. For any Gorenstein homologically smooth locally finite DG algebra $\mathcal{A}$, we define a group homomorphism $\mathrm{Hdet}: \mathrm{Aut}_{dg}(\mathcal{A})\to k^{\times},$ called the homological determinant.
 As applications, we present a sufficient condition for the invariant DG subalgebra $\mathcal{A}^G$ to be Gorensten, where $\mathcal{A}$ is a homologically smooth DG algebra such that $H(\mathcal{A})$ is a Noetherian AS-Gorenstein graded algebra and $G$ is a finite subgroup of $\mathrm{Aut}_{dg}(\mathcal{A})$. Especially, we can apply this result to  DG down-up algebras and non-trivial DG free algebras generated in two degree-one elements.

\end{abstract}

\maketitle
\section*{introduction}
 Local cohomology is an
indispensable tool in almost all branches of analytic and algebraic
geometry as well as in commutative and combinatorial algebra. Since
the flourishing of non-commutative algebraic geometry in
1990s, people have done a lot to develop a theory of local cohomology in the
context of non-commutative cases.  In \cite{Yek}, Yekutieli
introduced balanced dualizing complexes for non-commutative graded
algebras by using local cohomology functors.  After his work,
J$\o$rgensen \cite{Jor1} generalized the theory of local cohomology
and local duality theorem to non-commutative positively graded Noetherian
algebras which are quotients of Artin-Schelter Gorenstein algebras.
Later, Van den Bergh  gave a more general local duality
formula in \cite{VDB1}. In \cite{JoZ}, local cohomology functor was used to define homological determinants for AS-Gorenstein algebras.
In the invariant theory of Hopf algebra action (and formerly group actions) on non-commutative graded algebras, the notion of homological determinant usually serves as an effective tool in precise analysis (cf. \cite{CWZ, JoZ,JZ,KKZ1, LMZ1}).

In this paper, we are concerned with connected cochain DG $k$-algebras.
 Any connected graded algebra can
be seen as a connected cochain DG algebra with zero differential. The motivation of this paper is to
introduce an analogous local cohomology theory under the background
of DG homological algebra. Let $\mathcal{A}$ be a connected cochain DG algebra with maximal DG ideal $\frak{m}=\mathcal{A}^{\ge 1}$. For any DG $\mathcal{A}$-module $M$, its subset $$\Gamma_{\frak{m}}(M) = \{m\in M\mid \mathcal{A}^{\ge
n}m = 0 ,\,\, \text{for some}\,\, n \ge 1\}$$ is a DG
$\mathcal{A}$-submodule of $M$.  We actually define a left exact covariant $\mathcal{A}$-linear functor $$\Gamma_{\frak{m}}(-)=\varinjlim_{n\ge 1}\Hom_{\mathcal{A}}(\mathcal{A}/\mathcal{A}^{\ge
n},-)$$  from the category $\mathscr{C}(\mathcal{A})$ of left DG $\mathcal{A}$-modules to itself.
It is natural to define its right derived functor $R\Gamma_{\frak{m}}(-)$ as the local
cohomology functor. We emphasize that our definitions of torsion DG modules and the local cohomology functor are quite different from the definitions in \cite{Jor2}.  Due to the differences between DG homological
functors and hyper-homological functors in computation, it seems
difficult to get a complete generalization of the original theorem
of local duality as in \cite[Theorem 2.3]{Jor1}.
 For example, we don't know whether the torsion
functor preserves the homotopically injective property of DG
modules. However, under some mild hypothesis, we prove the following
local duality theorem (see Theorem \ref{localdual}) in DG context.
\\
\begin{bfseries}
Theorem \ A.
\end{bfseries}
Let $M$ be a compact DG module over a locally finite connected cochain DG algebra $\mathcal{A}$.
Then $R\Gamma_{\frak{m}}(M)' \cong
R\Hom_{\mathcal{A}}(M,R\Gamma_{\frak{m}}(\mathcal{A})')$ in $\mathscr{D}(\mathcal{A}\!^{op})$.

Note that we add compact condition on the DG module $M$, and we define the Matlis dual of a graded $k$-vector space $V$ as $$V'=\Hom_k(V,k)=\bigoplus\limits_{i\in \Bbb{Z}}\Hom_{k}(V^i,k).$$
Using the
duality formula of the theorem above, we give characterizations of
Gorensteinness for Noetherian homologically
smooth connected cochain DG algebras. This is the following theorem
(see
Theorem \ref{smoothgoren}).  \\
\begin{bfseries}
Theorem \ B.
\end{bfseries}
Let $\mathcal{A}$ be a Noetherian homologically smooth connected cochain DG
algebra. Then $\mathcal{A}$ is Gorenstein if and only if
$R\Gamma_{\frak{m}}(\mathcal{A})\cong \mathcal{A}'\otimes_{\mathcal{A}} L$ in $\mathscr{D}(\mathcal{A}^e)$,
where $L$ is an invertible DG bi-module over $\mathcal{A}$.

 Applying the characterization of Gorensteinness for Noetherian
homologically smooth connected cochain DG algebras in the theorem
above, we give a proof of the following interesting theorem (see
Theorem \ref{biduality}).
 \\
\begin{bfseries}
Theorem \ C.
\end{bfseries}
Assume that $\mathcal{A}$ is a Noetherian homologically smooth and Gorenstein
 connected cochain DG algebra. Then for any $M\in \mathscr{D}^c(\mathcal{A})$,
$R\Gamma_{\frak{m}}(M)'$ is a compact DG $\mathcal{A}\!^{op}$-module. And
there is a pair of contravariant equivalences of categories,
\begin{align*}
\xymatrix{&\mathscr{D}^{c}(\mathcal{A})\quad\quad\ar@<1ex>[r]^{R\Gamma_{\frak{m}(-)'}}&\quad\quad
\mathscr{D}^{c}(\mathcal{A}\!^{op})\ar@<1ex>[l]^{R\Gamma_{\frak{m}^{op}}(-)'}}.
 \end{align*}

 People have done a lot of work
to extend the rich theory of commutative Gorenstein rings to DG
algebras (cf. \cite{FHT1,AF2,DGI,FJ,Dual, MW2}). In this paper, we study the Gorensteinness of invariant DG subalgebras of some nice Gorenstein DG algebras.
 In the literature, there have been many papers on the behavior of the Gorenstein property under the passage to the subring of invariants (cf. \cite{Wat1,Wat2,Hin, JZ, KKZ1}). However, it is infeasible to mimic the proof there since the techniques and tools used in commutative and non-commutative graded setting are not available in DG context.
 For this,
 we need to develop some tools. Let $\mathcal{A}$ be a Gorenstein, homologically smooth and locally finite connected cochain DG algebra. We define homological determinant for any DG algebra automorphism of $\mathcal{A}$ in Definition \ref{DGhdet}. Suppose $G$ is a finite subgroup of the automorphism group of a connected cochain DG algebra $\mathcal{A}$. The invariant DG subalgebra is defined to be
$$\mathcal{A}^G=\{x\in \mathcal{A}\mid g(x)=x, \,\, \text{for all}\,\, g\in G\}.$$
Then $H(G):\{H(g)| g\in G\}$ is a finite subgroup of $\mathrm{Aut}_{gr}H(\mathcal{A})$ and the graded fixed subalgebra $H(\mathcal{A})^{H(G)}$ of $H(\mathcal{A})$ is isomorphic to $H(\mathcal{A}^{G})$ (see Proposition \ref{equal}).
Moreover, we prove the following theroem (see Theorem \ref{fixedsub}).\\
 \begin{bfseries}
Theorem \ D.
\end{bfseries}
Let $\mathcal{A}$ be a homologically smooth locally finite connected cochain DG algebra such that $H(\mathcal{A})$ is a Noetherian AS-Gorenstein graded algebra.
If $G$ is a finite subgroup of $\mathrm{Aut}_{dg}(\mathcal{A})$ such that the homological determinant of each $\sigma \in G$ is $1$,  then the fixed DG subalgebra $\mathcal{A}^G$ is a Gorenstein DG algebra.

In addition, we find that the result in Theorem D can be successfully applied to DG down-up algebras and some
non-trivial DG free algebras (see  Corollary \ref{dgducase} and Corollary \ref{dgfreecase}).

To close the introduction, let us run through some notation and conventions. Let $W$ be a graded $k$-vector space.  For any  $j\in\Bbb{Z}$, the $j$-th suspension $\Sigma^j W$ of $W$ is a graded vector space defined by $(\Sigma^j W)^i=W^{i+j}$.
We assume that the reader is familiar with basics on DG algebras and DG modules. Most of the notations and conventions on them we take in this paper are same as in \cite{MW1,MW2}.
Let $\mathcal{A}$ be a connected cochain DG algebra. We take the following notations:
\begin{enumerate}
\item $\mathcal{A}^{\#}$ is the underlying graded algebra of $\mathcal{A}$;
\item $\mathrm{Aut}_{dg}(\mathcal{A})$ is the group of all DG algebra automorphisms of $\mathcal{A}$;
\item $\mathcal{A}\!^{op}$ is the opposite DG
algebra of $\mathcal{A}$;
\item $\mathcal{A}^e$ is the enveloping DG algebra of $\mathcal{A}$;
\item $\frak{m}_{\mathcal{A}}$ is the maximal DG ideal $\mathcal{A}^{>0}$ of $\mathcal{A}$;
\item $H(\mathcal{A})$ is the cohomology graded algebra of $\mathcal{A}$;
\item $\mathscr{C}(\mathcal{A})$ is the category of left DG modules over $\mathcal{A}$;
\item $\mathscr{D}(\mathcal{A})$ is the derived category of $\mathscr{C}(\mathcal{A})$;
\item $\mathscr{D^c}(\mathcal{A})$ is the full triangulated subcategory of $\mathscr{D}(\mathcal{A})$ consisting of compact DG $\mathcal{A}$-modules.
\end{enumerate}
Let $z$ be a graded cocycle element in $\mathcal{A}$, we write $\lceil z \rceil$ for the cohomology class in $H(\mathcal{A})$ represented by $z$.
For any $M, N\in \mathscr{C}(\mathcal{A})$ and $\sigma\in \mathrm{Aut}_{dg}(\mathcal{A})$, We denote ${}^{\sigma}M$ by an object in $\mathscr{C}(\mathcal{A})$ such that
${}^{\sigma}M$ has the same underlying complex of $k$-vector spaces as $M$ and its multiplication is defined by $a\cdot m=\sigma(a)m$ for any $a\in \mathcal{A}$ and $m\in M$. A $k$-linear chain map $f:M\to N$ is called a $\sigma$-linear chain map if it gives a morphism of DG $\mathcal{A}$-modules, $f:M\to {}^{\sigma}N$, i.e. if $f(am)=\sigma(a)f(m)$.

Finally, we recall some properties of DG algebras, which will be mentioned in this paper.
 Let $\mathcal{A}$ be a connected cochain DG algebra.
\begin{enumerate}
\item  If $\mathcal{A}^{\#}$ is a Noetherian graded algebra, then $\mathcal{A}$ is called Noetherian;
\item  If $\dim_k \mathcal{A}^i<\infty, \forall i\in \Bbb{Z}
$, then $\mathcal{A}$
is called locally finite;
\item  If $\dim_{k}H(R\Hom_{\mathcal{A}}(k,\mathcal{A}))=1$, then $\mathcal{A}$ is called left Gorenstein (cf. \cite{FHT1}); Similarily, $\mathcal{A}$ is called right Gorenstein, if $\dim_{k}H(R\Hom_{\mathcal{A}\!^{op}}(k,\mathcal{A}))=1$;
\item  If ${}_{\mathcal{A}}k$, or equivalently ${}_{\mathcal{A}^e}\mathcal{A}$, admits a minimal semi-free resolution with a semi-basis concentrated in degree $0$, then $\mathcal{A}$ is called Koszul (cf. \cite{HW});
\item If ${}_{\mathcal{A}}k$, or equivalently the DG $\mathcal{A}^e$-module $\mathcal{A}$ is compact, then we say that $\mathcal{A}$ is a homologically smooth DG algebra(cf. \cite[Corollary 2.7]{MW3}).
\end{enumerate}
Note that each Noetherian connected cochain DG algebra is locally finite. While the converse is generally not true.
In general, a left Gorenstein connected cochain DG algebra  may be not right Gorenstein. However, a homologically smooth DG algebra is left Gorenstein if and only if it is right Gorenstein (see \cite[Remark 6.6]{MW2}). Beside this, we have the following remark for a homologically smooth DG algebra (see \cite{MW2}).
\begin{rem}\label{HSM}
Let $\mathcal{A}$ be a homologically smooth connected cochain DG algebra and $M\in \mathscr{D}(\mathcal{A})$. Then
\begin{enumerate}
\item $R\Hom_{\mathcal{A}}(k,\mathcal{A})\stackrel{\simeq}{\to}\Sigma^i k_{\mathcal{A}}$ if and only if
$R\Hom_{\mathcal{A}\!^{op}}(k,\mathcal{A})\stackrel{\simeq}{\to}\Sigma^i{}_{\mathcal{A}}k$;
\item $M\in \mathscr{D}^c(\mathcal{A})$ if $\dim_kH(M)<\infty$;
\item $H(\mathcal{A})$ is not bounded above if $\mathcal{A}$ is not quasi-isomorphic to $k$.
\end{enumerate}

\end{rem}
\section{some basics on dg modules and dg algebras}
In this section, we give some lemmas on DG modules and DG algebras, which will be used in the later sections.
\begin{lem}\label{semi-inj}
Let $I$ be a semi-injective DG $\mathcal{A}^e$-module. Then $I$ is
semi-injective both as a DG $\mathcal{A}$-module and as a DG $\mathcal{A}^{op}$-module.
\end{lem}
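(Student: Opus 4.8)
The plan is to realize the two module structures on $I$ as restrictions of scalars along the canonical DG algebra maps $\iota\colon \mathcal{A}\to \mathcal{A}^e$, $a\mapsto a\otimes 1$, and $\iota^{op}\colon \mathcal{A}^{op}\to \mathcal{A}^e$, $a\mapsto 1\otimes a$, and then to exploit that restriction along $\iota$ is right adjoint to the induction functor $\mathcal{A}^e\otimes_{\mathcal{A}}-$. Concretely, I would build everything on the natural isomorphism of complexes
\begin{equation*}
\Hom_{\mathcal{A}}(N,\iota_*I)\cong \Hom_{\mathcal{A}^e}(\mathcal{A}^e\otimes_{\mathcal{A}}N,\,I),\qquad N\in\mathscr{C}(\mathcal{A}),
\end{equation*}
which is the DG version of Hom--tensor adjunction; the only care needed here is to track the Koszul signs so that this is an isomorphism of cochain complexes, not merely of graded vector spaces.

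The first key step is to show that induction $\mathcal{A}^e\otimes_{\mathcal{A}}-$ is exact, i.e.\ preserves both acyclicity and quasi-isomorphisms. Here I would observe that, as a right DG $\mathcal{A}$-module via $\iota$, we have $\mathcal{A}^e=\mathcal{A}\otimes_k\mathcal{A}^{op}$ with $\mathcal{A}$ acting on the left tensor factor, so that $\mathcal{A}^e\otimes_{\mathcal{A}}N\cong \mathcal{A}^{op}\otimes_k N$ as complexes of $k$-vector spaces. Since $k$ is a field, the K\"unneth formula gives $H(\mathcal{A}^{op}\otimes_k N)\cong H(\mathcal{A}^{op})\otimes_k H(N)$ with no correction terms; hence $\mathcal{A}^e\otimes_{\mathcal{A}}N$ is acyclic whenever $N$ is, and a quasi-isomorphism $N\to N'$ is sent to a quasi-isomorphism. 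This is exactly the point at which the field hypothesis on $k$ enters.

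With exactness of induction in hand, the conclusion follows formally. Recall that $I$ being semi-injective over $\mathcal{A}^e$ means that $\Hom_{\mathcal{A}^e}(-,I)$ is exact (equivalently, $I^{\#}$ is a graded-injective $(\mathcal{A}^e)^{\#}$-module) and carries acyclic DG modules to acyclic complexes (homotopical injectivity). Reading the displayed adjunction as an equality of functors $\Hom_{\mathcal{A}}(-,\iota_*I)=\Hom_{\mathcal{A}^e}(-,I)\circ(\mathcal{A}^e\otimes_{\mathcal{A}}-)$, I would conclude: it is exact, being a composite of two exact functors; and it sends acyclic DG $\mathcal{A}$-modules to acyclic complexes, because induction lands them in acyclic DG $\mathcal{A}^e$-modules, which $\Hom_{\mathcal{A}^e}(-,I)$ then sends to acyclic complexes. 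Thus $\iota_*I$ is semi-injective over $\mathcal{A}$. Replacing $\iota$ by $\iota^{op}$ and $\mathcal{A}^e\otimes_{\mathcal{A}}-$ by the analogous induction functor $\mathcal{A}^e\otimes_{\mathcal{A}^{op}}-$ (now using that $\mathcal{A}^e$ is graded-free over $\mathcal{A}^{op}$, with $\mathcal{A}^e\otimes_{\mathcal{A}^{op}}N\cong \mathcal{A}\otimes_k N$) yields the statement for $\mathcal{A}^{op}$ verbatim.

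The main obstacle I anticipate is bookkeeping rather than conceptual: making the Hom--tensor adjunction precise as an isomorphism of complexes (the signs), and matching the exactness criterion to whichever of the equivalent definitions of semi-injectivity is in force. In particular, I would confirm separately that the graded-injective part of the definition is preserved, which again reduces to exactness of induction on the underlying graded categories, where $\mathcal{A}^e$ is plainly graded-free over both $\mathcal{A}$ and $\mathcal{A}^{op}$.
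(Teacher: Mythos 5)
Your proof is correct and takes essentially the same route as the paper: the paper's entire argument is the adjunction isomorphism $\Hom_{\mathcal{A}}(-,I)\cong \Hom_{\mathcal{A}}(-,\Hom_{\mathcal{A}^e}(\mathcal{A}^e,I))\cong \Hom_{\mathcal{A}^e}(\mathcal{A}\otimes_k -,I)$, after which it declares the semi-injectivity of ${}_{\mathcal{A}}I$ ``easy to check.'' What you add --- exactness of the induction functor via graded-freeness of $\mathcal{A}^e$ over $\mathcal{A}$ together with the K\"{u}nneth argument over the field $k$, applied separately to the homotopical and graded-injective parts of the definition --- is exactly the verification the paper leaves implicit.
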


\begin{proof}
 Since
$\Hom_{\mathcal{A}}(-,I)\cong \Hom_{\mathcal{A}}(-,\Hom_{\mathcal{A}^e}(\mathcal{A}^e,I))\cong
\Hom_{\mathcal{A}^e}(\mathcal{A}\otimes-, I),$ it is easy to check that  $I$ is a
semi-injective DG $\mathcal{A}$-module. Similarly, we can prove that $I$ is
also a semi-injective DG $\mathcal{A}^{op}$-module.
\end{proof}

\begin{lem}\label{tensorcong}
Let $\mathcal{A}$ and $\mathcal{B}$ be two connected cochain DG algebras. Then for any DG
$\mathcal{A}$-module $P$, DG $\mathcal{B}$-module $Q$ and DG $\mathcal{A}\otimes \mathcal{B}$-module $N$,
the chain map \begin{align*} \phi: \Hom_{\mathcal{A}\otimes \mathcal{B}}(P\otimes Q, N)&
\to \Hom_{\mathcal{A}}(P, \Hom_{\mathcal{B}}(Q,N))\\
& f\mapsto \phi(f): x\to f_x: y\to f(x\otimes y)\\
\end{align*}
is an isomorphism, whose inverse is
\begin{align*}
\psi:\Hom_{\mathcal{A}}(P, \Hom_{\mathcal{B}}(Q,N))&\to  \Hom_{\mathcal{A}\otimes \mathcal{B}}(P\otimes Q, N)\\
                                                  g &\mapsto \psi(g): x\otimes y \mapsto g(x)(y).
\end{align*}
\end{lem}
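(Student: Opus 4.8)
The plan is to recognize this as the Koszul-signed tensor--Hom adjunction in the DG setting and to prove it by checking that the two explicitly given maps $\phi$ and $\psi$ are mutually inverse chain maps of Hom-complexes. First I would verify that $\phi$ is well defined, i.e. that for $f\in\Hom_{\mathcal{A}\otimes\mathcal{B}}(P\otimes Q,N)$ the assignment $x\mapsto f_x$, where $f_x(y)=f(x\otimes y)$, genuinely lands in $\Hom_{\mathcal{A}}(P,\Hom_{\mathcal{B}}(Q,N))$. This splits into two checks. For fixed homogeneous $x$, that $f_x$ is $\mathcal{B}$-linear follows by applying the $\mathcal{A}\otimes\mathcal{B}$-linearity of $f$ to elements of the form $(1\otimes b)(x\otimes y)$ and matching the Koszul sign coming from the module action $(a\otimes b)(x\otimes y)=(-1)^{|b||x|}(ax)\otimes(by)$; that $x\mapsto f_x$ is $\mathcal{A}$-linear follows analogously from elements $(a\otimes 1)(x\otimes y)$, where $\Hom_{\mathcal{B}}(Q,N)$ carries the $\mathcal{A}$-module structure induced from the $\mathcal{A}$-action on $N$ (this structure is consistent precisely because the $\mathcal{A}$- and $\mathcal{B}$-actions on $N$ commute up to the Koszul sign).

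Symmetrically, I would check that $\psi$ is well defined, i.e. that $\psi(g)\colon x\otimes y\mapsto g(x)(y)$ is $\mathcal{A}\otimes\mathcal{B}$-linear. The only care needed here is to reassemble the two separate conditions for $g$ --- that $g$ is $\mathcal{A}$-linear and that each $g(x)$ is $\mathcal{B}$-linear --- into the single $\mathcal{A}\otimes\mathcal{B}$-linearity statement, once more tracking the sign $(-1)^{|b||x|}$. I would then confirm that $\phi$ and $\psi$ are inverse to each other as maps of graded $k$-vector spaces; this is immediate and sign-free, since $\psi(\phi(f))(x\otimes y)=\phi(f)(x)(y)=f(x\otimes y)$ and $\phi(\psi(g))(x)(y)=\psi(g)(x\otimes y)=g(x)(y)$, whence $\psi\circ\phi=\mathrm{id}$ and $\phi\circ\psi=\mathrm{id}$ on the nose.

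It then remains to verify that $\phi$ is a chain map, which upgrades the bijection to an isomorphism of complexes. Recall that on a Hom-complex the differential is $\partial(f)=\partial_N\circ f-(-1)^{|f|}f\circ\partial_{P\otimes Q}$, that the nested complex $\Hom_{\mathcal{A}}(P,\Hom_{\mathcal{B}}(Q,N))$ has its differential built in the same shape degree by degree, and that $\partial_{P\otimes Q}(x\otimes y)=\partial_P x\otimes y+(-1)^{|x|}x\otimes\partial_Q y$. Expanding $\phi(\partial f)$ and $\partial(\phi f)$ and evaluating both at $x$ and then at $y$, the summands coincide once the sign $(-1)^{|x|}$ coming from $\partial_{P\otimes Q}$ is matched against the sign produced by passing the outer differential of $\Hom_{\mathcal{A}}(P,-)$ through the inner differential of $\Hom_{\mathcal{B}}(Q,N)$.

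The main obstacle is purely bookkeeping: keeping every Koszul sign consistent across the three places they occur --- the algebra product on $\mathcal{A}\otimes\mathcal{B}$ together with the module action on $P\otimes Q$, the graded $\mathcal{A}$- and $\mathcal{B}$-linearity conditions, and the internal differentials of the nested Hom-complexes. There is no conceptual difficulty beyond fixing one convention for the Koszul rule at the outset and applying it uniformly; with the sign conventions pinned down, all four verifications are routine.
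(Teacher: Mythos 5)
Your proposal is correct and follows essentially the same route as the paper's proof: direct verification that $f_x$ is $\mathcal{B}$-linear and $x\mapsto f_x$ is $\mathcal{A}$-linear, that $\phi$ is a chain map via explicit expansion of both differentials with the sign $(-1)^{|x|}$ from $\partial_{P\otimes Q}$, and that $\phi$ and $\psi$ are mutually inverse on the nose. Your observation that bijectivity makes the separate chain-map check for $\psi$ redundant is a small economy over the paper, which instead invokes ``similarly'' for that step, but the substance is identical.
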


\begin{proof}

For any $f\in  \Hom_{\mathcal{A}\otimes \mathcal{B}}(P\otimes Q, N)$ and $x\in P$,
we should prove the following statements
\begin{enumerate}
\item $f_x\in  \Hom_{\mathcal{B}}(Q,N)$; \\
\item $\phi(f)\in \Hom_{\mathcal{A}}(P,\Hom_{\mathcal{B}}(Q,N))$;\\
\item $\phi$ is a chain map.
\end{enumerate}
For any $a,b\in B$, $x\in P$ and $y\in Q$, we have  $f_x\in  \Hom_{\mathcal{B}}(Q,N)$ since
\begin{align*}
f_x(by)&=f(x\otimes by)=f[(1\otimes b)(x\otimes y)]\\
       &=(1\otimes b)f(x\otimes y)=(1\otimes b)f_x(y)=bf_x(y).
\end{align*}
Similarly, $\phi(f)(ax)=a\phi(f)(x)$ since
\begin{align*}
&\phi(f)(ax)=f_{ax}:y\mapsto f(ax\otimes y)=(a\otimes 1)f(x\otimes y)\\
&a\phi(f)(x)=af_x: y\mapsto af(x\otimes y)=(a\otimes 1)f(x\otimes y).
\end{align*}
Furthermore,
\begin{align*}
&[\partial_{\Hom}\circ \phi(f)](x)(y)=[\partial_{\Hom_{\mathcal{B}}(Q,N)}\circ f_x-(-1)^{|f|}\phi(f)(\partial_P(x))](y)\\
&=\partial_N\circ f_x(y)-(-1)^{|f_x|}f_x[\partial_Q(y)]-(-1)^{|f|}f(\partial_P(x)\otimes y)\\
&=\partial_N(f(x\otimes y))-(-1)^{|f|+|x|}f[x\otimes \partial_Q(y)]-(-1)^{|f|}f[\partial_P(x)\otimes y]
\end{align*}
and
\begin{align*}
&[\phi\circ \partial_{\Hom}(f)](x)(y)=[\partial_{\Hom}(f)]_x(y)=[\partial_N\circ f-(-1)^{|f|}f\circ\partial_{P\otimes Q}]_x(y)\\
&=\partial_N(f(x\otimes y))-(-1)^{|f|}f[\partial_P(x)\otimes y+(-1)^{|x|}x\otimes \partial_Q(y)]\\
&=\partial_N(f(x\otimes y))-(-1)^{|f|}f[\partial_P(x)\otimes y]-(-1)^{|f|+|x|}f[x\otimes \partial_Q(y)].
\end{align*}
So $\phi$ is a chain map. Similarly, we can show that $\psi$ is chain map. Furthermore,
\begin{align*}
&\psi\circ \phi (f) (x\otimes y)= \phi(f)(x)(y)=f_x(y)=f(x\otimes y),\\
&\phi\circ \psi (g) (x)(y)= \psi (g)_x(y)= \psi (g)(x\otimes y)=g(x)(y),
\end{align*}
for any $f\in \Hom_{\mathcal{A}}(P,\Hom_{\mathcal{B}}(Q,N))$ and $g\in \Hom_{\mathcal{A}}(P, \Hom_{\mathcal{B}}(Q,N))$, $x\in P$ and $y\in Q$.
Thus $\phi\circ \psi=\mathrm{id}$ and $\psi\circ \phi=\mathrm{id}$.
\end{proof}

\begin{lem}\label{tenprop}
Let $\mathcal{A}$ and $\mathcal{B}$ be two homologically smooth connected cochain DG
algebras. Then
\begin{enumerate}
\item  $\mathcal{A}\otimes \mathcal{B}$ is a homologically smooth
connected cochain DG algebra;
\item  $\mathcal{A}\otimes \mathcal{B}$ is Gornstein if both $\mathcal{A}$ and $\mathcal{B}$ are Gorenstein DG algebras.
 \end{enumerate}
\end{lem}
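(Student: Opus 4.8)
The plan is to prove both parts at once by fixing finite semifree resolutions of the residue field over $\mathcal{A}$ and $\mathcal{B}$, tensoring them to resolve $k$ over $\mathcal{A}\otimes\mathcal{B}$, and then feeding this into the Hom--tensor adjunction of Lemma \ref{tensorcong} to obtain a Künneth-type formula. First I would record the preliminaries. Since $\mathcal{A}^0=\mathcal{B}^0=k$ and both algebras are non-negatively graded, $(\mathcal{A}\otimes\mathcal{B})^0=k$ and $\mathcal{A}\otimes\mathcal{B}$ is again a connected cochain DG algebra with $k_{\mathcal{A}\otimes\mathcal{B}}=k_{\mathcal{A}}\otimes_k k_{\mathcal{B}}$. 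Because $\mathcal{A}$ and $\mathcal{B}$ are homologically smooth, $k_{\mathcal{A}}$ and $k_{\mathcal{B}}$ are compact, so each admits a minimal semifree resolution $P\xrightarrow{\simeq}k_{\mathcal{A}}$ over $\mathcal{A}$ and $Q\xrightarrow{\simeq}k_{\mathcal{B}}$ over $\mathcal{B}$ with a \emph{finite} semibasis (minimality makes the differential of $\Hom_{\mathcal{A}}(P,k)$ vanish, so finiteness of the semibasis is equivalent to $\dim_k H(R\Hom_{\mathcal{A}}(k,k))<\infty$, which is forced by compactness of $k_{\mathcal{A}}$; similarly for $\mathcal{B}$).

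For part (1), since $k$ is a field, $P\otimes Q$ is a semifree DG $\mathcal{A}\otimes\mathcal{B}$-module whose semibasis is the product of those of $P$ and $Q$, hence finite, and the Künneth theorem shows $P\otimes Q\xrightarrow{\simeq}k_{\mathcal{A}}\otimes_k k_{\mathcal{B}}=k_{\mathcal{A}\otimes\mathcal{B}}$ is a quasi-isomorphism. Thus $k_{\mathcal{A}\otimes\mathcal{B}}$ admits a semifree resolution with finite semibasis, so it is compact; by definition this says precisely that $\mathcal{A}\otimes\mathcal{B}$ is homologically smooth.

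For part (2), assume in addition that $\mathcal{A}$ and $\mathcal{B}$ are Gorenstein. By part (1) the algebra $\mathcal{A}\otimes\mathcal{B}$ is homologically smooth, so it suffices to show $\dim_k H(R\Hom_{\mathcal{A}\otimes\mathcal{B}}(k,\mathcal{A}\otimes\mathcal{B}))=1$. Using the resolution $P\otimes Q$ and Lemma \ref{tensorcong} with $N=\mathcal{A}\otimes\mathcal{B}$, I would write
\begin{align*}
R\Hom_{\mathcal{A}\otimes\mathcal{B}}(k,\mathcal{A}\otimes\mathcal{B})&\cong\Hom_{\mathcal{A}\otimes\mathcal{B}}(P\otimes Q,\mathcal{A}\otimes\mathcal{B})\\
&\cong\Hom_{\mathcal{A}}\bigl(P,\Hom_{\mathcal{B}}(Q,\mathcal{A}\otimes\mathcal{B})\bigr).
\end{align*}
Because $Q$ has a finite semibasis, evaluation gives a DG $\mathcal{A}$-module isomorphism $\Hom_{\mathcal{B}}(Q,\mathcal{A}\otimes\mathcal{B})\cong\mathcal{A}\otimes_k\Hom_{\mathcal{B}}(Q,\mathcal{B})$, and because $P$ has a finite semibasis, $\Hom_{\mathcal{A}}(P,\mathcal{A}\otimes_k W)\cong\Hom_{\mathcal{A}}(P,\mathcal{A})\otimes_k W$ for any $k$-complex $W$. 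Taking $W=\Hom_{\mathcal{B}}(Q,\mathcal{B})$ yields
\begin{align*}
R\Hom_{\mathcal{A}\otimes\mathcal{B}}(k,\mathcal{A}\otimes\mathcal{B})\cong R\Hom_{\mathcal{A}}(k,\mathcal{A})\otimes_k R\Hom_{\mathcal{B}}(k,\mathcal{B}).
\end{align*}
Applying the Künneth theorem over $k$ then gives $\dim_k H(R\Hom_{\mathcal{A}\otimes\mathcal{B}}(k,\mathcal{A}\otimes\mathcal{B}))=\dim_k H(R\Hom_{\mathcal{A}}(k,\mathcal{A}))\cdot\dim_k H(R\Hom_{\mathcal{B}}(k,\mathcal{B}))=1$, so $\mathcal{A}\otimes\mathcal{B}$ is Gorenstein.

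The step I expect to be the main obstacle is verifying the two evaluation isomorphisms as genuine DG-module isomorphisms, i.e. pulling the tensor factors $\mathcal{A}$ and $\mathcal{B}$ out of the internal Hom. This is where the finiteness of the semibases is essential (the natural map $\mathcal{A}\otimes_k\Hom_{\mathcal{B}}(Q,\mathcal{B})\to\Hom_{\mathcal{B}}(Q,\mathcal{A}\otimes\mathcal{B})$ is an isomorphism only when $Q$ is finitely generated), and it requires careful bookkeeping of the differentials and the Koszul signs carried through the adjunction of Lemma \ref{tensorcong}; once the identity $R\Hom_{\mathcal{A}\otimes\mathcal{B}}(k,\mathcal{A}\otimes\mathcal{B})\cong R\Hom_{\mathcal{A}}(k,\mathcal{A})\otimes_k R\Hom_{\mathcal{B}}(k,\mathcal{B})$ is established, the dimension count is immediate.
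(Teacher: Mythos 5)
Your proof is correct and follows essentially the same route as the paper's: both arguments tensor the finite minimal semi-free resolutions $P$ and $Q$ to resolve $k$ over $\mathcal{A}\otimes\mathcal{B}$, deduce smoothness from the finiteness of the resulting semi-basis, and then establish Gorensteinness via the adjunction of Lemma \ref{tensorcong}, the two evaluation isomorphisms justified by the finite semi-bases, and the K\"{u}nneth formula. The only difference is cosmetic: the paper phrases the smoothness conclusion through $\dim_k H(\Hom_{\mathcal{A}\otimes\mathcal{B}}(P\otimes Q,k))<\infty$, while you invoke compactness of $k_{\mathcal{A}\otimes\mathcal{B}}$ directly from the finite semi-basis.
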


\begin{proof}
(1)
Let $P$ and $Q$ be the minimal semi-free resolutions of ${}_{\mathcal{A}}k$ and
${}_{\mathcal{B}}k$ respectively. Then $P\otimes Q$ is a minimal semi-free
resolution of ${}_{\mathcal{A}\otimes \mathcal{B}}k$. By Lemma \ref{tensorcong}, we have
$$H(\Hom_{\mathcal{A}\otimes \mathcal{B}}(P\otimes Q, k)) \cong H(\Hom_{\mathcal{A}}(P,
\Hom_{\mathcal{B}}(Q,k))).$$ Since ${}_{\mathcal{A}}k$ and ${}_{\mathcal{B}}k$ are compact by the
assumption, both $P$ and $Q$ have a finite semi-basis. This implies
that $\dim_kH(\Hom_{\mathcal{A}\otimes \mathcal{B}}(P\otimes Q,k))<\infty.$ Hence
$\mathcal{A}\otimes \mathcal{B}$ is also a homologically smooth connected cochain DG
algebra.

(2)By (1), $\mathcal{A}\otimes \mathcal{B}$ is homologically
smooth. Let $P$ and $Q$ be the minimal
semi-free resolutions of ${}_{\mathcal{A}}k$ and ${}_{\mathcal{B}}k$ respectively. Then
$P\otimes Q$ is a minimal semi-free resolution of ${}_{\mathcal{A}\otimes
\mathcal{B}}k$. We have
\begin{align*}
H(\Hom_{\mathcal{A}\otimes
\mathcal{B}}(P\otimes Q, \mathcal{A}\otimes \mathcal{B})) & \stackrel{(a)}{\cong} H(\Hom_{\mathcal{A}}(P, \Hom_{\mathcal{B}}(Q,\mathcal{A}\otimes
\mathcal{B})))\\
&\stackrel{(b)}{\cong} H(\Hom_{\mathcal{A}}(P, \mathcal{A}\otimes \Hom_{\mathcal{B}}(Q,\mathcal{B})) \\
&\stackrel{(c)}{\cong} H(\Hom_{\mathcal{A}}(P,\mathcal{A})\otimes \Hom_{\mathcal{B}}(Q,\mathcal{B}))\\
&\stackrel{(d)}{\cong} H(\Hom_{\mathcal{A}}(P,\mathcal{A}))\otimes H(\Hom_{\mathcal{B}}(Q,\mathcal{B})),
\end{align*}
where $(a)$ is by  Lemma \ref{tensorcong}, $(b)$ and $(c)$ are obtained by the fact that both $P$ and $Q$ have a finite semi-basis, and $(d)$ is by the well known K\"{u}nneth formula.
On the other hand, $\dim_k H(\Hom_{\mathcal{A}}(P,\mathcal{A})) = \dim_k H(\Hom_{\mathcal{B}}(Q,\mathcal{B}))=
1$ since $\mathcal{A}$ and $\mathcal{B}$ are Gorenstein. Thus $\dim_kH(\Hom_{\mathcal{A}\otimes
\mathcal{B}}(P\otimes Q, \mathcal{A}\otimes \mathcal{B}))=1$. This implies that $\mathcal{A}\otimes \mathcal{B}$ is also Gorenstein.
\end{proof}

\begin{defn}\label{dginvert}
{\rm A DG $\mathcal{A}^e$-module $L$ is called an invertible DG $\mathcal{A}$-bimodule if
there is some DG $\mathcal{A}^e$-module $L^{\vee}$ such that $L\otimes_{\mathcal{A}}
L^{\vee}\cong L^{\vee}\otimes_{\mathcal{A}} L \cong \mathcal{A}$  over $\mathcal{A}^e$.}
\end{defn}

For any $\phi\in \mathrm{Aut}_{dg}(\mathcal{A})$ and $i\in \Bbb{Z}$, we difine an invertible DG
bimodule $\Sigma^i \mathcal{A}(\phi)$ as follows. As a left DG $\mathcal{A}$-module,
$\Sigma^i\mathcal{A}(\phi)$ is the DG $\mathcal{A}$-module $\Sigma^i\mathcal{A}$ with a
generator $e=1$ in degree $-i$. The right multiplication is
given by the rule $ea =(-1)^{i\cdot|a|}
 \phi(a)e, a\in \mathcal{A}$.
 One sees that
 $\Sigma^i\mathcal{A}(\phi)\otimes_{\mathcal{A}}
 \Sigma^u \mathcal{A}(\psi) \cong \Sigma^{i+u}\mathcal{A}(\phi\circ \psi)$, for any $\psi \in \mathrm{Aut}_{dg}(\mathcal{A})$.
\begin{lem}\label{hom}
Let $L\cong \Sigma^i\mathcal{A}(\phi)$ be an invertible DG bimodule over a connected cochain DG algebra $\mathcal{A}$, for some
$i\in \Bbb{Z}$ and $
\phi\in \mathrm{Aut}_{dg}(\mathcal{A})$.
 Then $\Hom_{\mathcal{A}}(L,L)\cong \mathcal{A}$
as a DG $\mathcal{A}^e$-module.
\end{lem}
\begin{proof}
We have the following isomorphisms of DG $\mathcal{A}^e$-modules:
$$\Hom_{\mathcal{A}}(L,L)\cong \Hom_{\mathcal{A}}(L,\mathcal{A})\otimes_{\mathcal{A}}L \cong
\mathcal{A}(\phi^{-1})\otimes_{\mathcal{A}} \mathcal{A}(\phi) \cong \mathcal{A}. $$
\end{proof}

By \cite[Proposition 1.10]{Yek}, any invertible graded bimodule over a connected graded algebra $R$ can be denoted by $\Sigma^iR(\phi)$, for some $i\in \Bbb{Z}$ and $\phi\in \mathrm{Aut}_{gr}(R)$. We attempt to obtain a similar result in DG context. However, the following counter example
indicates that
not all invertible DG $\mathcal{A}$-bimodules have the form $\Sigma^i \mathcal{A}(\psi)$, $i\in \Bbb{Z}$ and $\psi\in \mathrm{Aut}_{dg}(\mathcal{A})$.
\begin{ex}
Let $\mathcal{A}=k[x]/(x^2)$ with $\partial_{\mathcal{A}}=0$ and $|x|=1$. Assume that  $L$ and $L^{\vee}$ are DG $\mathcal{A}$-bimodules such that
\begin{align*}
&L^{\#}=\mathcal{A}^{\#}e \,\,\text{with}\,\, e\cdot \bar{x}=\bar{x}e, \partial_{L}(e)=\bar{x}e\\
&L^{\vee\#}=\mathcal{A}^{\#}f \,\,\text{with}\,\, f\cdot \bar{x}=\bar{x}f, \partial_{L^{\vee}}(f)=-\bar{x}f.
\end{align*}
Then $\partial_{\otimes}(e\otimes f)=\bar{x}e\otimes f-e\otimes \bar{x}f=e\cdot\bar{x}\otimes f-e\otimes \bar{x}f=0$ in $L\otimes_{\mathcal{A}}L^{\vee}$. One see that
the morphisms
\begin{align*}
\theta: & L\otimes_{\mathcal{A}}L^{\vee}\to \mathcal{A}\\
        &e\otimes f\mapsto  1_{\mathcal{A}}\\
        \end{align*}
and
\begin{align*}
\omega: & L^{\vee}\otimes_{\mathcal{A}}L\to \mathcal{A}\\
        &f\otimes e\mapsto  1_{\mathcal{A}}\\
        \end{align*}
 are both isomorphisms of DG $\mathcal{A}^e$-modules. Therefore, $L$ and $L^{\vee}$ are invertible DG $\mathcal{A}$-bimodules.
 However, $L$ is not isomorphic to $\mathcal{A}(\phi)$ for any $\phi\in \mathrm{Aut}_{dg}(\mathcal{A})$, since $\partial_L(e)=\bar{x}e\neq 0$.
\end{ex}

\begin{lem}\label{onedim}
Let $M$ a DG $\mathcal{A}$-module such that $\dim_kH(M)=1$. Then
$M\cong\Sigma^i {}_{\mathcal{A}}k$ in $\mathscr{D}(\mathcal{A})$, for some integers
$i\in \Bbb{Z}$.
\end{lem}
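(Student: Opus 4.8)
The plan is to produce a single quasi-isomorphism between $M$ and an appropriate shift of ${}_{\mathcal A}k$, exploiting that the cohomology sits in one degree. Since $\dim_kH(M)=1$, there is a unique integer $n_0$ with $H^{n_0}(M)\cong k$ and $H^{j}(M)=0$ for $j\ne n_0$; set $i=-n_0$, so that $\Sigma^{i}{}_{\mathcal A}k$ is concentrated in degree $n_0$. The observation that drives everything is a rigidity principle: if $N_1,N_2$ are DG $\mathcal A$-modules whose cohomologies are both concentrated in degree $n_0$ and one-dimensional there, then any chain map $N_1\to N_2$ that is nonzero on $H^{n_0}$ is automatically a quasi-isomorphism, being an isomorphism of one-dimensional spaces in degree $n_0$ and the zero map between zero spaces elsewhere. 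Hence it suffices to exhibit one chain map $M\to\Sigma^{i}{}_{\mathcal A}k$ that does not vanish on $H^{n_0}$.

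To build such a map I would first replace $M$ by a minimal semifree resolution $\pi\colon F\xrightarrow{\simeq}M$, which exists over any connected cochain DG algebra (cf. \cite{MW1}); then $M\cong F$ in $\mathscr D(\mathcal A)$, minimality gives $\partial_F(F)\subseteq\frak m F$, and each semibasis element has differential lying in the $\mathcal A$-span of strictly earlier basis elements. Next I would analyze the bottom degree. Ordering the semibasis by non-decreasing degree, let $n_1$ be the lowest degree occurring; since $\mathcal A^{<0}=0$ we have $F^{n_1-1}=0$ and $F^{n_1}=\bigoplus_{|e_\lambda|=n_1}k e_\lambda$. The first semibasis element $e$ is a cocycle (its differential lies in the span of strictly earlier elements, of which there are none in degree $\le n_1$), and it is not a coboundary because $F^{n_1-1}=0$; hence $H^{n_1}(F)\ne 0$. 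Comparing with $H(F)=H(M)$ forces $n_1=n_0$, and then $H^{n_0}(F)=Z^{n_0}(F)$ is exactly the one-dimensional space of cocycles inside $\bigoplus_{|e_\lambda|=n_0}k e_\lambda$. Fix a generating cocycle $w=\sum_\lambda c_\lambda e_\lambda$ of $H^{n_0}(F)$ and an index $\lambda_0$ with $c_{\lambda_0}\ne 0$.

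Then I would define an $\mathcal A$-linear map $\mu\colon F\to\Sigma^{i}{}_{\mathcal A}k$ by sending $e_{\lambda_0}$ to the canonical generator of $\Sigma^{i}k$ (degree $n_0$) and all other semibasis elements to $0$, extended $\mathcal A$-linearly over the free graded module $F^{\#}$. The crucial check is that $\mu$ is a chain map, which reduces to $\mu(\partial_F e_\lambda)=0$ for every $\lambda$: this holds because minimality gives $\partial_F e_\lambda\in\frak m F$, so the coefficient of $e_{\lambda_0}$ in $\partial_F e_\lambda$ lies in $\frak m$ and is annihilated by the augmentation $\mathcal A\to k$. On cohomology $\mu$ carries $[w]$ to $c_{\lambda_0}\ne 0$, so it is nonzero on $H^{n_0}$; by the rigidity principle $\mu$ is a quasi-isomorphism, yielding $M\cong F\cong\Sigma^{i}{}_{\mathcal A}k$ in $\mathscr D(\mathcal A)$ with $i=-n_0$.

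I expect the main obstacle to be the bottom-degree bookkeeping in the minimal semifree resolution: one must use both features simultaneously, namely semifreeness (differentials of semibasis elements involve only strictly earlier basis elements, so the lowest one is a cocycle) and minimality ($\partial_F\subseteq\frak m F$, so the projection $\mu$ respects differentials), in order to pin down that the lowest semibasis degree equals $n_0$, that $F^{n_0}$ is $k$-spanned by semibasis elements, and that $\mu$ is a genuine chain map. Once these are in place, the degree argument forces the rest.
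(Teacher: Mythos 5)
Your argument is correct, but it takes a genuinely different route from the paper's. You pass to a minimal semi-free resolution $F\xrightarrow{\simeq}M$ and project onto a bottom semibasis element, producing the roof $M\xleftarrow{\simeq}F\xrightarrow{\ \mu\ }\Sigma^{i}{}_{\mathcal{A}}k$. The paper never resolves $M$ at all: it picks vector-space decompositions $M^j=B^j\oplus H^j\oplus C^j$ with $B^j=\mathrm{Im}\,\partial^{j-1}$ and $\mathrm{Ker}\,\partial^j=B^j\oplus H^j$, forms the truncation DG submodule $\tau^{\ge i}(M)$ (zero below the degree where the cohomology sits, equal to $C^i\oplus H^i$ there, and all of $M^j$ above), notes that the inclusion $\tau^{\ge i}(M)\hookrightarrow M$ is a quasi-isomorphism, and then divides out the quasi-trivial DG submodule $o(\tau^{\ge i}(M))$ obtained by shrinking $C^i\oplus H^i$ to $C^i$; the quotient is literally $\Sigma^i{}_{\mathcal{A}}k$, giving the roof $M\xleftarrow{\simeq}\tau^{\ge i}(M)\xrightarrow{\simeq}\Sigma^i{}_{\mathcal{A}}k$. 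The paper's construction is more elementary and hypothesis-free (pure linear algebra inside $M$, no existence theorem invoked), whereas yours, once the resolution is granted, is equally short and in addition isolates a reusable fact that the paper's proof does not: the lowest semibasis degree of a minimal semi-free resolution equals $\inf\{j\mid H^j(M)\neq0\}$.

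Two points in your write-up should be firmed up, both easily repairable. First, minimal semi-free resolutions do not exist ``over any connected cochain DG algebra'' unconditionally: the hypothesis in \cite[Proposition 2.4]{MW1} is that $H(M)$ is bounded below (automatic here since $\dim_kH(M)=1$), and it is that construction which also yields a semibasis bounded below in degree, justifying your phrase ``the lowest degree occurring.'' Second, a degree-nondecreasing ordering of the semibasis need not satisfy the semifreeness requirement that $\partial_F e_\lambda$ involve only strictly earlier basis elements: over a connected cochain algebra, minimality forces the basis elements appearing in $\partial_Fe_\lambda$ to have degree $\le|e_\lambda|$, but terms of the \emph{same} degree (with coefficients in $\mathcal{A}^1\subseteq\mathfrak{m}$) can occur, so being first in degree does not by itself make $e$ a cocycle. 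The repair is immediate: among the basis elements of lowest degree $n_1$, take one of lowest semi-free filtration level; all elements strictly below it in the filtration then have degree $\ge n_1+1$, so its differential lies in $\bigl(\bigoplus_{|e_\mu|=n_1+1}ke_\mu\bigr)\cap\mathfrak{m}F=0$. (Equivalently, refine the degree order within each degree by the semi-free order; minimality makes this lexicographic order again semi-free compatible.) With these two points supplied, your projection argument, and hence the whole proof, is sound.
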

\begin{proof}
Since $\dim_kH(M)=1$, we have $H(M)=\Sigma^i k$ for some integers
$i\in \Bbb{Z}$.
For each $j\in \Bbb{Z}$,  there is a
decomposition of $k$-vector spaces $M^j= B^j \oplus H^j \oplus
C^j$ with $B^j = \mathrm {Im} \,
\partial^{j-1}$ and $\mathrm {Ker}\,
\partial^j = B^j \oplus H^j$. Define $\tau^{\ge
i}(M)$ as the DG $\mathcal{A}$-submodule of $M$ such that
\begin{align*}
[\tau^{\ge
i}(M)]^j=\begin{cases}
        0, \quad \text{if}\,\, j\le i-1,\\
        C^i\oplus H^i, \,\,\text{if}\,\, j=i,\\
        M^j, \,\,\text{if}\,\, j>i.
\end{cases}
\end{align*}
The inclusion map $\iota: \tau^{\ge
i}(M)\to M$ is a quasi-isomorphism of DG $\mathcal{A}$-modules.
Let $o(\tau^{\ge
i}(M))$ be the DG $\mathcal{A}$-submodule of $\tau^{\ge i}(M)$ such that
\begin{equation*}
o(\tau^{\ge i}(M))^m =
\begin{cases} C^i,\quad\text{if}\,\, m=i,\\
 [\tau^{\ge
i}(M)]^m\quad\text{if}\,\, m\neq i.
\end{cases}
\end{equation*}
Obviously, $o(\tau^{\ge i}(M))$ is quasi-trivial. Hence the
canonical morphism $$\tau^{\ge i}(M) \to \tau^{\ge i}(M)/o(\tau^{\ge
i}(M))= \Sigma^i {}_{\mathcal{A}}k $$ is a quasi-isomorphism. Therefore,
$M\cong\Sigma^i {}_{\mathcal{A}}k$ in $\mathscr{D}(\mathcal{A})$.
\end{proof}
\begin{lem}\label{lrgor}
Let $M$ be a DG $\mathcal{A}^e$-module such that ${}_{\mathcal{A}}M\cong \Sigma^i(\mathcal{A}_{\mathcal{A}})'$ and $M_{\mathcal{A}}\cong \Sigma^i({}_{\mathcal{A}}\mathcal{A})'$.
Then there is an invertible DG $\mathcal{A}$-bimodule $L=\Sigma^i\mathcal{A}(\psi)$ for some $\psi\in \mathrm{Aut}_{dg}(\mathcal{A})$ such that  $M=L\otimes_{\mathcal{A}}\mathcal{A}'$.
\end{lem}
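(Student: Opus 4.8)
The plan is to reformulate the desired conclusion as a one-sided twisting statement and then produce the automorphism $\psi$ by transporting the bimodule structure of $M$ onto $\mathcal{A}'$. First I recall that $\mathcal{A}'=\Hom_k(\mathcal{A},k)$ is itself a DG $\mathcal{A}^e$-module whose left structure is $(\mathcal{A}_{\mathcal{A}})'$ and whose right structure is $({}_{\mathcal{A}}\mathcal{A})'$, so the hypotheses say precisely that $M$ and $\Sigma^i\mathcal{A}'$ agree after forgetting to one side. Using the convention for $\mathcal{A}(\phi)$ fixed before the lemma, a direct check of the balancing in the tensor product gives $\Sigma^i\mathcal{A}(\phi)\otimes_{\mathcal{A}}\mathcal{A}'\cong\Sigma^i\,{}^{\phi^{-1}}(\mathcal{A}')$ as DG $\mathcal{A}^e$-modules, where ${}^{\phi^{-1}}(\mathcal{A}')$ uses the twisting notation ${}^{\sigma}(-)$ introduced above. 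Hence it suffices to find $\chi\in\mathrm{Aut}_{dg}(\mathcal{A})$ with $M\cong\Sigma^i\,{}^{\chi}(\mathcal{A}')$, and then take $\psi=\chi^{-1}$ and $L=\Sigma^i\mathcal{A}(\psi)$.

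To produce $\chi$, I would fix an isomorphism $\beta\colon \Sigma^i\mathcal{A}'\xrightarrow{\ \cong\ }M$ of right DG $\mathcal{A}$-modules, coming from $M_{\mathcal{A}}\cong\Sigma^i({}_{\mathcal{A}}\mathcal{A})'$, and transport the left $\mathcal{A}$-action of $M$ along $\beta$. This yields a new left action $\bullet$ on $\Sigma^i\mathcal{A}'$ that commutes with the standard right action, i.e.\ a homomorphism of DG algebras $\rho\colon\mathcal{A}\to\Hom_{\mathcal{A}^{op}}(\mathcal{A}',\mathcal{A}')$, $a\mapsto(f\mapsto a\bullet f)$ (up to the usual Koszul signs). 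The key input is the identification $\Hom_{\mathcal{A}^{op}}(\mathcal{A}',\mathcal{A}')\cong\mathcal{A}$ of DG algebras, realized by the natural left-multiplication map $a\mapsto(f\mapsto a\cdot f)$; granting this, $\rho$ becomes a DG algebra endomorphism $\chi$ of $\mathcal{A}$, and the bimodule $(\Sigma^i\mathcal{A}',\bullet,\cdot)$ is by construction $\Sigma^i\,{}^{\chi}(\mathcal{A}')$ and isomorphic to $M$. Running the symmetric transport using the left-module isomorphism ${}_{\mathcal{A}}M\cong\Sigma^i(\mathcal{A}_{\mathcal{A}})'$ produces a two-sided inverse for $\chi$, so $\chi\in\mathrm{Aut}_{dg}(\mathcal{A})$; one checks along the way that $\chi$ has degree $0$ and commutes with $\partial_{\mathcal{A}}$, since $\beta$ and all structure maps are morphisms of DG modules.

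I expect the main obstacle to be the identification $\Hom_{\mathcal{A}^{op}}(\mathcal{A}',\mathcal{A}')\cong\mathcal{A}$ as DG algebras. I would establish it from the adjunction $\Hom_{\mathcal{A}^{op}}(X,Y')\cong(X\otimes_{\mathcal{A}}Y)'$ (for $X$ a right and $Y$ a left DG module) specialized to $X=\mathcal{A}'$, $Y={}_{\mathcal{A}}\mathcal{A}$, together with Matlis double duality $(\mathcal{A}')'\cong\mathcal{A}$; this is exactly the point where local finiteness of $\mathcal{A}$ is needed, so I would invoke it here (as in the locally finite setting governing the rest of the paper). The injectivity of $a\mapsto(f\mapsto a\cdot f)$ holds for any connected $\mathcal{A}$, but surjectivity is precisely the double-duality statement. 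The remaining verifications—the balancing computation giving $\Sigma^i\mathcal{A}(\phi)\otimes_{\mathcal{A}}\mathcal{A}'\cong\Sigma^i\,{}^{\phi^{-1}}(\mathcal{A}')$, the chain-map and degree bookkeeping for $\chi$, and the fact that the shift is automatically $\Sigma^i$ because both one-sided structures carry the same shift—are routine. Equivalently, one may phrase the whole argument functorially by setting $L:=\Hom_{\mathcal{A}^{op}}(\mathcal{A}',M)$ and checking that the evaluation map $L\otimes_{\mathcal{A}}\mathcal{A}'\to M$ is an isomorphism of DG $\mathcal{A}^e$-modules with $L\cong\Sigma^i\mathcal{A}(\psi)$; this packages the same content but still rests on the same Matlis identification.
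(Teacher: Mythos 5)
Your proof is correct, but it runs the paper's argument in the Matlis-dual direction, so the two routes are worth comparing. The paper dualizes first: from the hypotheses (plus local finiteness) it deduces that $\Sigma^i M'$ is free of rank one both as a left and as a right DG $\mathcal{A}$-module, reads the automorphism off a degree-zero cycle generator $e$ of ${}_{\mathcal{A}}(\Sigma^iM')$ via $e\cdot a=\phi(a)e$ (uniqueness of the coefficient from left-freeness; bijectivity and the chain-map identity $\phi\circ\partial_{\mathcal{A}}=\partial_{\mathcal{A}}\circ\phi$ from right-freeness and $\partial(e)=0$), so that $\Sigma^iM'\cong\mathcal{A}(\phi)$ over $\mathcal{A}^e$, and then returns to $M$ through the adjunction chain $M\cong\Sigma^i(\mathcal{A}(\phi))'\cong\Sigma^i\Hom_{\mathcal{A}}(\mathcal{A}(\phi),\mathcal{A})\otimes_{\mathcal{A}}\mathcal{A}'\cong\Sigma^i\mathcal{A}(\phi^{-1})\otimes_{\mathcal{A}}\mathcal{A}'$, which plays exactly the role of your balancing isomorphism $\Sigma^i\mathcal{A}(\psi)\otimes_{\mathcal{A}}\mathcal{A}'\cong\Sigma^i\,{}^{\psi^{-1}}(\mathcal{A}')$. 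You instead stay on $M$ itself, transport the left action through a right-module identification $M\cong\Sigma^i\mathcal{A}'$, and then need the extra identification $\Hom_{\mathcal{A}^{op}}(\mathcal{A}',\mathcal{A}')\cong\mathcal{A}$, which the paper's route avoids; but that identification rests on the same two ingredients (Hom-tensor adjunction and double duality) that the paper uses silently when passing between $M$ and $M''$, and both proofs genuinely need local finiteness of $\mathcal{A}$, which you rightly flag even though the lemma's statement omits it. Your version buys functoriality (no choice of generator); the paper's buys concreteness, and in particular invertibility of the twisting map comes essentially for free from the two freeness statements, whereas your appeal to ``the symmetric transport produces a two-sided inverse for $\chi$'' is the one step you should expand. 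A clean fix: dualize your bimodule isomorphism $M\cong\Sigma^i\,{}^{\chi}(\mathcal{A}')$ against the hypothesis ${}_{\mathcal{A}}M\cong\Sigma^i(\mathcal{A}_{\mathcal{A}})'$ to obtain a right DG $\mathcal{A}$-module isomorphism $\theta$ from $\left(\mathcal{A},\,x\cdot a=x\chi(a)\right)$ onto $\mathcal{A}_{\mathcal{A}}$; then $\theta(\chi(a))=\theta(1)a$ with $\theta(1)\in k^{\times}$ by connectedness, so $\theta\circ\chi$ is bijective and hence so is $\chi$. With that point made explicit, your argument is a complete and valid proof of the lemma.
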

\begin{proof}
By the assumption, we have ${}_{\mathcal{A}}(\Sigma^iM')\cong {}_{\mathcal{A}}\mathcal{A}$ and $(\Sigma^iM')_{\mathcal{A}}\cong \mathcal{A}_{\mathcal{A}}$. Let $e$ be the generator of
${}_{\mathcal{A}}(\Sigma^iM')$. Then for any $a\in \mathcal{A}$, there exists unique $l_a\in \mathcal{A}$ such that
$e\cdot a=l_ae.$ This induces a map $\phi:\mathcal{A}\to \mathcal{A}$ such that $\phi(a)=l_a$. It is straightforward to check that $\phi$ is an isomorphism of the graded algebra $\mathcal{A}$.
Furthermore, $\phi$ is a chain map since
\begin{align*}
\partial_{\mathcal{A}}(\phi(a))e&=\partial_{\mathcal{A}}(l_a)e\\
&=\partial_{\Sigma^iM'}(l_ae)\\
&=\partial_{\Sigma^iM'}(e\cdot a)\\
&=e\cdot \partial_{\mathcal{A}}(a)\\
&=l_{\partial_{\mathcal{A}}(a)}e =\phi(\partial_{\mathcal{A}}(a))e,
\end{align*}
for any $a\in \mathcal{A}$. So $\phi\in \mathrm{Aut}_{dg}\mathcal{A}$ and $\Sigma^iM'\cong \mathcal{A}(\phi)$ as a DG $\mathcal{A}^e$-module.
Therefore, the DG $\mathcal{A}$-bimodule \begin{align*}
M\cong \Sigma^i(\mathcal{A}(\phi))'&\cong \Sigma^i\Hom_k(\mathcal{A}\otimes_{\mathcal{A}}\mathcal{A}(\phi),k)\\
                                   &\cong \Sigma^i\Hom_{\mathcal{A}}(\mathcal{A}(\phi),\Hom_k(\mathcal{A},k))\\
                                   &\cong \Sigma^i\Hom_{\mathcal{A}}(\mathcal{A}(\phi),\mathcal{A}\otimes_{\mathcal{A}} \mathcal{A}')\\
                                   &\cong \Sigma^i\Hom_{\mathcal{A}}(\mathcal{A}(\phi), \mathcal{A})\otimes_{\mathcal{A}} \mathcal{A}'\\
                                   &\cong \Sigma^i\mathcal{A}(\phi^{-1})\otimes_{\mathcal{A}} \mathcal{A}'.
\end{align*}
Let $\psi=\phi^{-1}$ and $L=\Sigma^i\mathcal{A}(\psi)$. Then the DG $\mathcal{A}$-bimodule $L$ is invertible and $M\cong L\otimes_{\mathcal{A}}\mathcal{A}'$.
\end{proof}

\section {local cohomology functor in dg setting}
Assume that $M$ is a DG module over a connected cochain DG algebra $\mathcal{A}$. Its
 subset $$\Gamma_{\frak{m}}(M) = \{m\in M|\mathcal{A}^{\ge
n}m = 0 ,\,\, \text{for some}\,\, n \ge 1\}$$ is actually a DG
$\mathcal{A}$-submodule. Indeed, for any $m\in \Gamma_{\frak{m}}(M)$
and $a\in \mathcal{A}$, there exists an integer $n\ge 1$ such that $\mathcal{A}^{\ge n}m = 0$ and hence
 $\mathcal{A}^{\ge n}(am)\subseteq \mathcal{A}^{\ge n}m = 0$. Then $am\in
\Gamma_{\frak{m}}(M)$. Moreover, we have $$0=\partial_M(\mathcal{A}^{\ge
n}m)=\partial_{\mathcal{A}}(\mathcal{A}^{\ge n})m + \mathcal{A}^{\ge n}\partial_M(m)$$ and
$\partial_{\mathcal{A}}(\mathcal{A}^{\ge n})m\subseteq \mathcal{A}^{\ge n}m =0$.  So
$\partial_M(m)\in \Gamma_{\frak{m}}(M)$. We denote $\Gamma_{\frak{m}}(M)$ by the torsion submodule of $M$.
Let $f: M\to N$ be a morphism of DG $\mathcal{A}$-modules.
Then $f(\Gamma_{\frak{m}}(M)) \subseteq
\Gamma_{\frak{m}}(N)$ since $f$ is $\mathcal{A}$-linear. Hence there exists a
morphism of DG $\mathcal{A}$-modules $\Gamma_{\frak{m}}(f):\Gamma_{\frak{m}}M
\to \Gamma_{\frak{m}}N$ which agrees with $f$ on each element of
$\Gamma_{\mathrm{m}}(M)$. Furthermore,
 if $g: M\to N, h: N\to L$ are morphisms of DG
$\mathcal{A}$-modules and $a\in \mathcal{A}$, then \begin{align*}
\begin{cases}
\Gamma_{\frak{m}}(h\circ f) =
\Gamma_{\frak{m}}(h)\circ \Gamma_{\frak{m}}(f),\\
\Gamma_{\frak{m}}(f + g) = \Gamma_{\frak{m}}(f) +
\Gamma_{\frak{m}}(g),\\
\Gamma_{\frak{m}}(\mathrm{id}_M) =
\mathrm{id}_{\Gamma_{\frak{m}}(M)}.
\end{cases}
\end{align*}
Thus $\Gamma_{\frak{m}}(-)$ is
a covariant  functor from $\mathscr{C}(\mathcal{A})$ to itself. Since the torsion functor
$$\Gamma_{\frak{m}}(-) = \varinjlim_{n\ge 1}\Hom_{\mathcal{A}}(\mathcal{A}/\mathcal{A}^{\ge
n},-),$$ it is left exact. By the definition of torsion functor above, it is clear that the
functors $\Hom_{\mathcal{A}}(M,-)$ and $\Hom_{\mathcal{A}}(M,\Gamma_{\frak{m}}(-))$ are
naturally isomorphic, if $M$ is a finite length DG $\mathcal{A}$-module.
 We write $R\Gamma_{\frak{m}}(-)$ as
the right derived functor of
$\Gamma_{\frak{m}}(-)$.

For
any object $M$ in $\mathscr{C}(\mathcal{A})$, its local cohomology $H_{\frak{m}}(M)$ is defined to be the
graded $H(\mathcal{A})$-module $H(R\Gamma_{\frak{m}}(M))$. Similarly, we have functors $\Gamma_{\frak{m}^{op}}$ and $\Gamma_{\frak{m}_{\mathcal{A}^e}}$ on $\mathscr{C}(\mathcal{A}^{op})$ and $\mathscr{C}(\mathcal{A}^e)$ respectively.
We use the same notations for some variants of these functors.
For example, $\Gamma_{\frak{m}}$ defines a functor from $\mathscr{C}(\mathcal{A}\otimes \mathcal{B})$ to $\mathscr{C}(\mathcal{A}\otimes \mathcal{B})$, where $\mathcal{B}$ is another connected cochain DG algebra.

\begin{lem}\label{comlim}
Let $\mathcal{A}$ be a homologically smooth and locally finite connected cochain DG
algebra. Then $R\Gamma_{\frak{m}}(-)$ commutes with direct limits.
\end{lem}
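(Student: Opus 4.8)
The plan is to express $R\Gamma_{\frak{m}}$ as a filtered colimit of derived Hom functors against the quotient DG modules $\mathcal{A}/\mathcal{A}^{\ge n}$, to observe that each of these is compact, and then to interchange two filtered colimits. First I would establish the comparison formula
$$R\Gamma_{\frak{m}}(M)\cong \varinjlim_{n\ge 1}R\Hom_{\mathcal{A}}(\mathcal{A}/\mathcal{A}^{\ge n},M),\qquad M\in \mathscr{C}(\mathcal{A}).$$
To see this, pick a semi-injective resolution $M\stackrel{\simeq}{\to}I$; by definition of the right derived functor together with the colimit presentation of $\Gamma_{\frak{m}}$ recalled above, $R\Gamma_{\frak{m}}(M)=\Gamma_{\frak{m}}(I)=\varinjlim_{n\ge 1}\Hom_{\mathcal{A}}(\mathcal{A}/\mathcal{A}^{\ge n},I)$, and since $I$ is semi-injective the $n$-th term computes $R\Hom_{\mathcal{A}}(\mathcal{A}/\mathcal{A}^{\ge n},M)$.

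The key input is that each $\mathcal{A}/\mathcal{A}^{\ge n}$ is a compact DG module. Indeed, as $\mathcal{A}$ is connected and locally finite, $\mathcal{A}/\mathcal{A}^{\ge n}=\bigoplus_{i=0}^{n-1}\mathcal{A}^i$ is finite dimensional over $k$, so $\dim_kH(\mathcal{A}/\mathcal{A}^{\ge n})<\infty$; since $\mathcal{A}$ is homologically smooth, Remark \ref{HSM}(2) gives $\mathcal{A}/\mathcal{A}^{\ge n}\in \mathscr{D}^c(\mathcal{A})$. Consequently $\mathcal{A}/\mathcal{A}^{\ge n}$ is a retract in $\mathscr{D}(\mathcal{A})$ of a semi-free DG module $F_n$ with finite semi-basis, so that $\Hom_{\mathcal{A}}(F_n,-)$ is, on underlying complexes, a finite direct sum of shifted copies of the identity functor. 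As finite direct sums commute with filtered colimits, $\Hom_{\mathcal{A}}(F_n,-)$ commutes with direct limits, and hence so does its retract $R\Hom_{\mathcal{A}}(\mathcal{A}/\mathcal{A}^{\ge n},-)$.

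With these two facts in hand I would conclude by interchanging colimits. For a direct system $\{M_\lambda\}_{\lambda\in\Lambda}$ in $\mathscr{C}(\mathcal{A})$,
\begin{align*}
R\Gamma_{\frak{m}}\big(\varinjlim_{\lambda}M_\lambda\big)
&\cong \varinjlim_{n}R\Hom_{\mathcal{A}}\big(\mathcal{A}/\mathcal{A}^{\ge n},\varinjlim_{\lambda}M_\lambda\big)\\
&\cong \varinjlim_{n}\varinjlim_{\lambda}R\Hom_{\mathcal{A}}(\mathcal{A}/\mathcal{A}^{\ge n},M_\lambda)\\
&\cong \varinjlim_{\lambda}\varinjlim_{n}R\Hom_{\mathcal{A}}(\mathcal{A}/\mathcal{A}^{\ge n},M_\lambda)\\
&\cong \varinjlim_{\lambda}R\Gamma_{\frak{m}}(M_\lambda),
\end{align*}
where the first and last isomorphisms are the comparison formula, the second is the compactness step, and the third is the commutation of two filtered colimits; taking cohomology (which itself commutes with filtered colimits, these being exact over the field $k$) shows that the canonical comparison map is an isomorphism in $\mathscr{D}(\mathcal{A})$.

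The step I expect to be the main obstacle is the compactness input: proving that $R\Hom_{\mathcal{A}}(\mathcal{A}/\mathcal{A}^{\ge n},-)$ commutes with direct limits. This is precisely where homological smoothness is essential --- via Remark \ref{HSM}(2) it forces $\mathcal{A}/\mathcal{A}^{\ge n}$ to be compact, hence to admit a finite semi-free resolution, so that the Hom-complex is a finite rather than an infinite product; for a general locally finite $\mathcal{A}$ the resolution could carry infinitely many basis elements in the relevant range and the passage to the colimit would fail. Some care is also needed to check that the comparison maps in the displayed chain are the canonical ones and are mutually compatible, but this is routine once the colimit presentation of $R\Gamma_{\frak{m}}$ is fixed.
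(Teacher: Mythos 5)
Your proposal is correct and follows essentially the same route as the paper: use local finiteness to see that each $\mathcal{A}/\mathcal{A}^{\ge n}$ is finite dimensional, invoke homological smoothness via Remark \ref{HSM}(2) to conclude it is compact and hence that $R\Hom_{\mathcal{A}}(\mathcal{A}/\mathcal{A}^{\ge n},-)$ is computed by a semi-free resolution with finite semi-basis and so commutes with direct limits, then interchange the two filtered colimits. The only cosmetic difference is that the paper takes a minimal semi-free resolution with finite semi-basis directly, while you pass through the retract characterization of compactness; the two are interchangeable here.
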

\begin{proof}
Since $\mathcal{A}$ is homologically smooth, ${}_{\mathcal{A}}k$ is a compact DG $\mathcal{A}$-module. For
any integer $n\ge 1$,  $\mathcal{A}/\mathcal{A}^{\ge n}$ is finite dimensional. Hence
$\mathcal{A}/\mathcal{A}^{\ge n}$ is a compact DG $\mathcal{A}$-module by Remark \ref{HSM}. Let $F$ be a minimal
semi-free resolution of $\mathcal{A}/\mathcal{A}^{\ge n}$. Then $F$ admits a finite
semi-basis. Hence $R\Hom_{\mathcal{A}}(\mathcal{A}/\mathcal{A}^{\ge n},-) = \Hom_{\mathcal{A}}(F,-)$ commutes
with direct limits. This implies that $R\Gamma_{\frak{m}}(-) =
\varinjlim_{n\ge 1}R\Hom_{\mathcal{A}}(\mathcal{A}/\mathcal{A}^{\ge n},-)$ commutes with direct
limits.
\end{proof}

\begin{lem}\label{torfindim}
Assume that $\mathcal{A}$ is a homologically smooth locally finite  connected cochain DG
algebra.  If
 $M$ is a direct limit of
DG $\mathcal{A}$-modules with finite dimensional cohomology, then the
canonical map $$\iota: R\Gamma_{\frak{m}}(M)\to M$$ is a
quasi-isomorphism.
\end{lem}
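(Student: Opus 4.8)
The plan is to reduce, in two stages, to the single computation that the canonical map $\iota_k\colon R\Gamma_{\frak{m}}(k)\to k$ is a quasi-isomorphism, where the situation becomes transparent. First I would invoke Lemma \ref{comlim}. Write $M=\varinjlim_i M_i$ with $\dim_k H(M_i)<\infty$. Since $R\Gamma_{\frak{m}}(-)$ and the identity functor both commute with direct limits and $\iota$ is a natural transformation between them, the map $\iota_M$ is the direct limit of the maps $\iota_{M_i}\colon R\Gamma_{\frak{m}}(M_i)\to M_i$. As cohomology commutes with filtered colimits of complexes of $k$-vector spaces, $H(\iota_M)=\varinjlim_i H(\iota_{M_i})$, which is an isomorphism once each $H(\iota_{M_i})$ is. Hence it suffices to prove the lemma under the extra hypothesis $\dim_k H(M)<\infty$ (so that $M\in\mathscr{D}^c(\mathcal{A})$ by Remark \ref{HSM}).

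Second, for such $M$ I would argue by d\'evissage. The full subcategory $\mathcal{C}\subseteq\mathscr{D}(\mathcal{A})$ of objects $N$ for which $\iota_N$ is a quasi-isomorphism is a triangulated subcategory: $R\Gamma_{\frak{m}}(-)$ is an exact functor and $\iota$ is a morphism of triangulated functors to the identity, so the five-lemma applied to a map of triangles shows $\mathcal{C}$ is closed under shifts and cones. I would then show every $M$ with $\dim_k H(M)<\infty$ lies in the triangulated subcategory $\langle k\rangle$ generated by $k$. Let $d$ be the top degree with $H^d(M)\neq 0$; the soft-truncation triangle $\tau^{\le d-1}M\to M\to\tau^{\ge d}M\to\Sigma\tau^{\le d-1}M$ has $\dim_k H(\tau^{\le d-1}M)<\dim_k H(M)$, while $\tau^{\ge d}M$ has cohomology concentrated in degree $d$, on which $\frak{m}$ acts trivially; generalising the truncation construction of Lemma \ref{onedim} with multiplicities gives $\tau^{\ge d}M\cong(\Sigma^{-d}k)^{\oplus\dim_k H^d(M)}$ in $\mathscr{D}(\mathcal{A})$. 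Induction on $\dim_k H(M)$ then places $M$ in $\langle k\rangle$, so it is enough to check $k\in\mathcal{C}$.

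Finally, for the base case I would compute the cone of $\iota_k$. Applying $R\Hom_{\mathcal{A}}(-,k)$ to the triangle $\mathcal{A}^{\ge n}\to\mathcal{A}\to\mathcal{A}/\mathcal{A}^{\ge n}\to\Sigma\mathcal{A}^{\ge n}$ and passing to the limit identifies $\mathrm{cone}(\iota_k)$ with $\varinjlim_{n}R\Hom_{\mathcal{A}}(\mathcal{A}^{\ge n},k)$. A minimal semi-free resolution $P_n$ of $\mathcal{A}^{\ge n}$ has its semi-basis concentrated in degrees $\ge n$ by connectivity of $\mathcal{A}$, so by minimality the differential of $\Hom_{\mathcal{A}}(P_n,k)$ vanishes and $H^j(R\Hom_{\mathcal{A}}(\mathcal{A}^{\ge n},k))=0$ for all $j>-n$. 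For each fixed $j$ the directed system $\{H^j(R\Hom_{\mathcal{A}}(\mathcal{A}^{\ge n},k))\}_n$ is therefore eventually zero, so its colimit vanishes; hence $\mathrm{cone}(\iota_k)$ is acyclic and $\iota_k$ is a quasi-isomorphism.

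The main obstacle is this last step: I must justify that the minimal semi-free resolution of the bounded-below DG module $\mathcal{A}^{\ge n}$ really has semi-basis in degrees $\ge n$, and that $\Hom_{\mathcal{A}}(P_n,k)$ is genuinely minimal with zero differential, since it is exactly the collapse of this differential that forces the degree bound and makes the colimit vanish. The d\'evissage sub-step—that a DG module with cohomology concentrated in a single degree and trivial $\frak{m}$-action is a direct sum of shifts of $k$ in $\mathscr{D}(\mathcal{A})$—also needs the truncation argument of Lemma \ref{onedim} to be run with multiplicities, but this is routine once the one-dimensional case is in hand.
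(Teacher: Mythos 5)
Your proposal is correct, but it takes a genuinely different route from the paper's for the key step. The first reduction is identical: both invoke Lemma \ref{comlim} to reduce to the case $\dim_k H(M)<\infty$. For that core case the paper argues in one stroke via Matlis duality: smoothness makes $M'$ a compact DG $\mathcal{A}^{op}$-module (Remark \ref{HSM}), so it has a minimal semi-free resolution $F_{M'}$ with \emph{finite} semi-basis; dualizing, $(F_{M'})'$ is a semi-injective resolution of $M$ (here the quasi-isomorphism $M\to M''$ is used), and it is already torsion, $\Gamma_{\frak{m}}((F_{M'})')=(F_{M'})'$, because it is finitely built from shifts of $\mathcal{A}'$; hence $R\Gamma_{\frak{m}}(M)=(F_{M'})'\simeq M$ with no d\'evissage at all. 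Your argument replaces this with d\'evissage to $k$ plus the colimit computation for $\varinjlim_n R\Hom_{\mathcal{A}}(\mathcal{A}^{\ge n},k)$; it is longer, but it buys something real: smoothness is then consumed only through Lemma \ref{comlim}, so your proof shows the finite-dimensional-cohomology case of the lemma holds over \emph{any} locally finite connected cochain DG algebra, whereas the paper's argument uses smoothness a second time (for compactness of $M'$). The obstacle you flag at the end largely dissolves: since $\mathcal{A}^{\ge n}$ is concentrated in degrees $\ge n$, the graded space $H(k{}^L\otimes_{\mathcal{A}}\mathcal{A}^{\ge n})$ is concentrated in degrees $\ge n$ (compute with the bar resolution of $k$), so the minimal semi-free resolution supplied by \cite[Proposition 2.4]{MW1} --- which the paper itself invokes elsewhere --- has semi-basis in degrees $\ge n$; moreover minimality is not even needed, since for \emph{any} semi-free $P_n$ with semi-basis in degrees $\ge n$ the complex $\Hom_{\mathcal{A}}(P_n,k)$ is concentrated in degrees $\le -n$, which already forces $H^j(R\Hom_{\mathcal{A}}(\mathcal{A}^{\ge n},k))=0$ for $j>-n$ whether or not its differential vanishes. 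One small correction to your d\'evissage: over a connected cochain (non-negatively graded) DG algebra it is the top truncation $\tau^{\ge d}M$ of Lemma \ref{onedim} that is a DG submodule, so the triangle should read $\tau^{\ge d}M\to M\to M/\tau^{\ge d}M\to\Sigma\tau^{\ge d}M$; this is harmless for the induction, and the multiplicity version of Lemma \ref{onedim} does go through verbatim, the quotient $\tau^{\ge d}M/o(\tau^{\ge d}M)$ being $H^d(M)$ placed in degree $d$ with trivial $\mathcal{A}$-action.
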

\begin{proof}
By Lemma \ref{comlim}, it suffices to show that the canonical map
$R\Gamma_{\frak{m}}(M)\to M$ is a quasi-isomorphism, if $M$ is a DG
$\mathcal{A}$-module with finite dimensional cohomology. Since $\mathcal{A}$ is
homologically smooth, $k_{\mathcal{A}}$ is compact. By Remark \ref{HSM}, $M'$
is a compact DG $\mathcal{A}\!^{op}$-module. There is a minimal semi-free
resolution $\gamma: F_{M'}\to M'$ such that $F_{M'}$ has a finite
semi-basis. This implies $\Gamma_{\frak{m}}((F_{M'})') = (F_{M'})'$.
Since $H(M)$ is finite dimensional,  $\theta: M\to M''$
  defined by $\theta(m)(f)=(-1)^{|f|\cdot|m|}f(m)$ is a
  quasi-isomorphism.
So $\Hom_k(\gamma,k)\circ\theta: M\to (F_{M'})'$ is a semi-injective
resolution of $M$. By the following commutative diagram

\begin{align*}
\xymatrix{H(R\Gamma_{\frak{m}}(M))\ar[r]^{H(\iota)}\ar[d]^{\mathrm{=}}
& H(M) \ar[d]^{H(\Hom_k(\gamma,k)\circ\theta)}
\\
H(\Gamma_{\frak{m}}((F_{M'})')) \ar[r]^{=} &H((F_{M'})'),
\\}\end{align*}
we get that $H(\iota)$ is an isomorphism.  Hence $\iota$ is a
quasi-isomorphism.

\end{proof}

\begin{prop}
Assume that $E$ is a semi-injective DG $\mathcal{A}\otimes \mathcal{B}$-module. For any
DG $\mathcal{B}$-module $Q$, $\Hom_{\mathcal{B}}(Q,E)$ is a semi-injective DG $\mathcal{A}$-module,
and $\Gamma_{\frak{m}_{\mathcal{B}}}(E)$ is a direct limit of semi-injective DG
$\mathcal{A}$-modules.
\end{prop}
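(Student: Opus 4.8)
The plan is to establish the two assertions in turn, obtaining the second as an immediate consequence of the first together with the definition of the torsion functor.

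For the first assertion, I would check semi-injectivity of $\Hom_{\mathcal{B}}(Q,E)$ over $\mathcal{A}$ straight from its defining property, namely that $\Hom_{\mathcal{A}}(-,\Hom_{\mathcal{B}}(Q,E))$ preserves quasi-isomorphisms. The pivot is Lemma \ref{tensorcong}, which furnishes a natural isomorphism of functors on $\mathscr{C}(\mathcal{A})$,
$$\Hom_{\mathcal{A}}(-,\Hom_{\mathcal{B}}(Q,E))\cong \Hom_{\mathcal{A}\otimes\mathcal{B}}((-)\otimes Q,\,E),$$
so everything reduces to the composite of $(-)\otimes Q\colon\mathscr{C}(\mathcal{A})\to\mathscr{C}(\mathcal{A}\otimes\mathcal{B})$ with $\Hom_{\mathcal{A}\otimes\mathcal{B}}(-,E)$. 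The first factor sends quasi-isomorphisms to quasi-isomorphisms: this is where the K\"unneth formula over the field $k$ enters, since for a quasi-isomorphism $f\colon P_1\to P_2$ the map $H(f\otimes\mathrm{id}_Q)$ is identified with $H(f)\otimes\mathrm{id}_{H(Q)}$, an isomorphism. The second factor preserves quasi-isomorphisms because $E$ is semi-injective over $\mathcal{A}\otimes\mathcal{B}$. Composing, $\Hom_{\mathcal{A}}(-,\Hom_{\mathcal{B}}(Q,E))$ preserves quasi-isomorphisms, so $\Hom_{\mathcal{B}}(Q,E)$ is semi-injective over $\mathcal{A}$. Should the definition of semi-injectivity also demand that $\Hom_{\mathcal{B}}(Q,E)^{\#}$ be graded-injective over $\mathcal{A}^{\#}$, I would deduce this from the same adjunction, using that $(-)\otimes_k Q^{\#}$ is exact over the field $k$ and that $E^{\#}$ is graded-injective.

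For the second assertion I would unwind the definition of the torsion functor,
$$\Gamma_{\frak{m}_{\mathcal{B}}}(E)=\varinjlim_{n\ge 1}\Hom_{\mathcal{B}}(\mathcal{B}/\mathcal{B}^{\ge n},E),$$
in which every $\mathcal{B}/\mathcal{B}^{\ge n}$ is a DG $\mathcal{B}$-module. Taking $Q=\mathcal{B}/\mathcal{B}^{\ge n}$ in the first assertion shows that each term of this system is a semi-injective DG $\mathcal{A}$-module, whence $\Gamma_{\frak{m}_{\mathcal{B}}}(E)$ is displayed as a direct limit of semi-injective DG $\mathcal{A}$-modules.

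I do not expect a serious obstacle in the argument; the point worth stressing is rather why the conclusion is phrased as it is. A direct limit of semi-injective DG modules need not itself be semi-injective, so one cannot upgrade the second assertion to the semi-injectivity of $\Gamma_{\frak{m}_{\mathcal{B}}}(E)$—this is precisely the difficulty signalled in the introduction, that the torsion functor is not known to preserve the homotopically injective property. The only remaining verifications are routine: that the isomorphism of Lemma \ref{tensorcong} is natural in the first variable and compatible with the residual $\mathcal{A}$-action on $E$, so that it is genuinely an isomorphism in $\mathscr{C}(\mathcal{A})$.
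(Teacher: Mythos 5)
Your proof is correct and follows essentially the same route as the paper: the paper's entire argument is the identification $\Hom_{\mathcal{A}}(-,\Hom_{\mathcal{B}}(Q,E))\cong \Hom_{\mathcal{A}\otimes \mathcal{B}}(-\otimes Q,E)$ from Lemma \ref{tensorcong}, followed by taking $Q=\mathcal{B}/\mathcal{B}^{\ge n}$ in the direct-limit description $\Gamma_{\frak{m}_{\mathcal{B}}}(E)=\varinjlim_{n\ge 1}\Hom_{\mathcal{B}}(\mathcal{B}/\mathcal{B}^{\ge n},E)$. The only difference is that you spell out the K\"{u}nneth and exactness verifications (both relying on $k$ being a field) that the paper compresses into the phrase ``this implies.''
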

\begin{proof}
By Lemma \ref{tensorcong}, one sees that $\Hom_{\mathcal{A}}(-,\Hom_{\mathcal{B}}(Q,E))=
\Hom_{\mathcal{A}\otimes \mathcal{B}}(-\otimes Q,E)$. This implies that $\Hom_{\mathcal{B}}(Q,E)$ is a
semi-injective DG $\mathcal{A}$-module. Hence $\Gamma_{\frak{m}_{\mathcal{B}}}(E) =
\varinjlim_{n\ge 1}\Hom_{\mathcal{B}}(\mathcal{B}/\mathcal{B}^{\ge n},E)$ is a direct limit of
semi-injective DG $\mathcal{A}$-modules.
\end{proof}

\section {local duality theorem and its applications}
 The notion of Gorenstein DG algebras was first
introduced by F$\acute{\mathrm{e}}$lix-Halperin-Thomas (cf.
\cite{FHT1}). In algebraic topology, Gorenstein DG algebras are
algebraic model of Gorenstein spaces. In the last twenty years,
Gorenstein DG algebras have been intensively studied in literature (cf.\cite{FJ,FHT1,Dual,Gam,MW2}). This section gives a
characterization of Gorensteinness for
homologically smooth connected cochain DG algebras by using local
cohomology functor.

\begin{thm}[Local Duality Theorem]\label{localdual}
Let $M$ be an object in $\mathscr{D}^c(\mathcal{A})$. Then
$$R\Gamma_{\frak{m}}(M)' = R\Hom_{\mathcal{A}}(M,R\Gamma_{\frak{m}}(\mathcal{A})')$$ in
$\mathscr{D}(\mathcal{A}\!^{op})$.
\end{thm}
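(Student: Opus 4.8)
The plan is to establish a \emph{projection formula} $R\Gamma_{\frak m}(M)\cong R\Gamma_{\frak m}(\mathcal A)\otimes^{\mathbf L}_{\mathcal A}M$ in $\mathscr D(\mathcal A)$ and then to read off the asserted duality by applying the Matlis dual $(-)'=\Hom_k(-,k)$ together with the standard Hom-tensor adjunction $\Hom_k(X\otimes_{\mathcal A}F,k)\cong\Hom_{\mathcal A}(F,\Hom_k(X,k))$. The whole role of the compactness hypothesis on $M$ is forced by the fact that $\mathcal A$ is only assumed locally finite: the finite length modules $\mathcal A/\mathcal A^{\ge n}$ need \emph{not} be compact (by Remark \ref{HSM}(2) that would require homological smoothness of $\mathcal A$), so I cannot make the evaluation isomorphism work through the first variable and must instead exploit that $M$ is perfect in the second variable.

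First I would fix $n\ge 1$ and consider the natural evaluation morphism
\[
\mathrm{ev}_n\colon\ R\Hom_{\mathcal A}(\mathcal A/\mathcal A^{\ge n},\mathcal A)\otimes^{\mathbf L}_{\mathcal A}M\ \longrightarrow\ R\Hom_{\mathcal A}(\mathcal A/\mathcal A^{\ge n},M),
\]
sending $\phi\otimes m$ to $g\mapsto\phi(g)m$. Both sides are exact functors of $M$ commuting with suspensions, and $\mathrm{ev}_n$ is visibly an isomorphism when $M=\mathcal A$; since $M\in\mathscr D^c(\mathcal A)$ lies in the thick subcategory generated by $\mathcal A$, a d\'evissage argument gives that $\mathrm{ev}_n$ is an isomorphism for every compact $M$. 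Concretely, for a minimal semi-free resolution $F\to M$ with a \emph{finite} semi-basis and a semi-free resolution $G\to\mathcal A/\mathcal A^{\ge n}$, this is just the bijectivity of the canonical map $\Hom_{\mathcal A}(G,\mathcal A)\otimes_{\mathcal A}F\to\Hom_{\mathcal A}(G,F)$. Passing to the direct limit over $n$, using that $-\otimes^{\mathbf L}_{\mathcal A}M$ commutes with direct limits and the identification $R\Gamma_{\frak m}(-)=\varinjlim_{n}R\Hom_{\mathcal A}(\mathcal A/\mathcal A^{\ge n},-)$ underlying Lemma \ref{comlim}, I obtain
\[
R\Gamma_{\frak m}(M)\ \cong\ \varinjlim_{n}\bigl(R\Hom_{\mathcal A}(\mathcal A/\mathcal A^{\ge n},\mathcal A)\otimes^{\mathbf L}_{\mathcal A}M\bigr)\ \cong\ R\Gamma_{\frak m}(\mathcal A)\otimes^{\mathbf L}_{\mathcal A}M.
\]

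Next I would dualise. With $F$ as above and a representative $B$ of $R\Gamma_{\frak m}(\mathcal A)$, the projection formula yields $R\Gamma_{\frak m}(M)\cong B\otimes_{\mathcal A}F$, whence
\[
R\Gamma_{\frak m}(M)'\cong(B\otimes_{\mathcal A}F)'=\Hom_k(B\otimes_{\mathcal A}F,k)\cong\Hom_{\mathcal A}(F,\Hom_k(B,k))=\Hom_{\mathcal A}(F,B'),
\]
the middle step being the Hom-tensor adjunction over $k$. Since $F$ is semi-free, hence K-projective, $\Hom_{\mathcal A}(F,B')$ computes $R\Hom_{\mathcal A}(M,R\Gamma_{\frak m}(\mathcal A)')$, which is exactly the right-hand side of the theorem.

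The main obstacle is the bookkeeping of module structures so that the final identification holds in $\mathscr D(\mathcal A\!^{op})$ and not merely in $\mathscr D(k)$: one must treat $R\Gamma_{\frak m}(\mathcal A)$ throughout as a DG $\mathcal A$-bimodule, letting its right structure feed the tensor product $B\otimes_{\mathcal A}F$ while its left structure survives the $k$-dual to produce the $\mathcal A\!^{op}$-action on $R\Gamma_{\frak m}(M)'$, and then verify that the adjunction isomorphism is $\mathcal A\!^{op}$-linear. A secondary technical point is the d\'evissage establishing $\mathrm{ev}_n$: it rests on $M$ admitting a \emph{finite} semi-basis, and this is precisely where compactness of $M$ — rather than homological smoothness of $\mathcal A$ — is indispensable. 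The remaining ingredients (exactness of the three functors involved and compatibility of $\varinjlim$ with the transition maps induced by the surjections $\mathcal A/\mathcal A^{\ge n+1}\twoheadrightarrow\mathcal A/\mathcal A^{\ge n}$) I expect to be routine.
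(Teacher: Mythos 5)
Your proposal has the same two-step skeleton as the paper's own proof: first a projection formula identifying $R\Gamma_{\frak{m}}(M)$ with $R\Gamma_{\frak{m}}(\mathcal{A})\otimes^{\mathbf{L}}_{\mathcal{A}}M$ (using compactness of $M$ through a finite semi-basis), then the Hom-tensor adjunction over $k$ to turn the Matlis dual of that tensor product into $R\Hom_{\mathcal{A}}(M,R\Gamma_{\frak{m}}(\mathcal{A})')$; your dualization step is literally the paper's. The difference is how the projection formula is justified. The paper does it concretely: it takes a semi-injective resolution $I$ of $\mathcal{A}$ over $\mathcal{A}^e$, notes that ${}_{\mathcal{A}}I$ is then semi-injective, deduces that $I\otimes_{\mathcal{A}}F_M$ is a semi-injective resolution of $M$ because $F_M$ has a finite semi-basis, and then identifies the complexes $\Gamma_{\frak{m}}(I\otimes_{\mathcal{A}}F_M)\cong\Gamma_{\frak{m}}(I)\otimes_{\mathcal{A}}F_M$. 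You instead argue by d\'evissage through the thick subcategory generated by $\mathcal{A}$ and pass to direct limits. What the paper's route buys is that every object in sight is an honest complex computing the right derived functor; what your route buys is that the key isomorphism is exhibited as a formal consequence of compactness, without ever checking semi-injectivity of a tensor product.

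The one step where your sketch is genuinely looser than the paper is the passage to the limit. Direct limits are not categorical colimits in $\mathscr{D}(\mathcal{A})$, so having the isomorphisms $\mathrm{ev}_n$ one $n$ at a time, in the derived category, does not by itself identify $\varinjlim_n$ of the two systems: you must realize both systems by complexes with strictly compatible transition maps and produce levelwise quasi-isomorphisms between them. The natural realization of your tensor-side system is $\Hom_{\mathcal{A}}(\mathcal{A}/\mathcal{A}^{\ge n},I)\otimes_{\mathcal{A}}F_M$, whose colimit is $\Gamma_{\frak{m}}(I)\otimes_{\mathcal{A}}F_M$, while the Hom-side system must be realized through a semi-injective resolution of $M$; comparing them forces you through the complex $\Gamma_{\frak{m}}(I\otimes_{\mathcal{A}}F_M)$, and the question of whether this computes $R\Gamma_{\frak{m}}(M)$ is exactly the point the paper settles by proving $I\otimes_{\mathcal{A}}F_M$ is semi-injective. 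There are two ways to patch your argument: import that semi-injectivity observation, or note that the comparison map $\Hom_{\mathcal{A}}(\mathcal{A}/\mathcal{A}^{\ge n},I\otimes_{\mathcal{A}}F_M)\to\Hom_{\mathcal{A}}(\mathcal{A}/\mathcal{A}^{\ge n},I_M)$ is itself a realization of your derived evaluation morphism and hence a quasi-isomorphism by your own d\'evissage, after which exactness of filtered colimits over $k$ finishes the job. With either patch the proof closes; and the bimodule bookkeeping you flag (right structure of $R\Gamma_{\frak{m}}(\mathcal{A})$ feeding the tensor, left structure surviving the dual) is precisely what the paper's choice of an $\mathcal{A}^e$-semi-injective $I$ is engineered to handle, so your instinct there matches the paper as well.
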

\begin{proof}
Since $M$ is compact,  it admits a minimal semi-free
resolution $F_M$ with
a finite semi-basis. Let $\eta: \mathcal{A} \stackrel{\simeq}{\to} I$ be a
semi-injective resolution of the DG $\mathcal{A}^e$-module $\mathcal{A}$. Since
$$\Hom_{\mathcal{A}}(-,I) \cong \Hom_{\mathcal{A}}(-,\Hom_{\mathcal{A}^e}(\mathcal{A}^e,I))\cong
\Hom_{\mathcal{A}^e}(\mathcal{A}\otimes_k-,I),$$ ${}_{\mathcal{A}}I$ is semi-injective.  This implies
that $I\otimes_{\mathcal{A}}F_M$ is a semi-injective DG $\mathcal{A}$-module since $F_M$
is a semi-free DG $\mathcal{A}$-module with a finite semi-basis. Since
\begin{align*}
R\Hom_{\mathcal{A}}(M,R\Gamma_{\frak{m}}(\mathcal{A})')=\Hom_{\mathcal{A}}(F_M,\Gamma_{\frak{m}}(I)')\cong (\Gamma_{\frak{m}}(I)\otimes_{\mathcal{A}} F_M)'
\end{align*}
and $$R\Gamma_{\frak{m}}(M)'=
R\Gamma_{\frak{m}}(F_M)'=(\Gamma_{\frak{m}}(I\otimes_{\mathcal{A}}F_M))'\cong
(\Gamma_{\frak{m}}(I)\otimes_{\mathcal{A}}F_M)',$$ we have
$$R\Gamma_{\frak{m}}(M)' = R\Hom_{\mathcal{A}}(M,R\Gamma_{\frak{m}}(\mathcal{A})')$$ in
$\mathscr{D}(\mathcal{A}\!^{op})$
\end{proof}

\begin{rem}\label{op}
For any compact DG $\mathcal{A}\!^{op}$-module $N$, we can similarly prove
that $$R\Gamma_{\frak{m}\!^{op}}(N)' =
R\Hom_{\mathcal{A}\!^{op}}(N,R\Gamma_{\frak{m}\!^{op}}(\mathcal{A})')$$ in
$\mathscr{D}(\mathcal{A})$.
\end{rem}

\begin{lem}\label{below}
Assume that $\mathcal{A}$ is a left Gorenstein locally finite connected cochain DG algebra. Then
$R\Gamma_{\frak{m}}(\mathcal{A})$ is a direct limit of DG $\mathcal{A}^e$-modules with
finite dimensional cohomology, and $H_{\frak{m}}(\mathcal{A})'$ is bounded
below.
\end{lem}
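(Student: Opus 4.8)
The plan is to compute $R\Gamma_{\frak{m}}(\mathcal{A})$ as a direct limit and then reduce everything to the single left Gorenstein input $\dim_k H(R\Hom_{\mathcal{A}}(k,\mathcal{A}))=1$. Fix a semi-injective resolution $\mathcal{A}\stackrel{\simeq}{\to} I$ in $\mathscr{C}(\mathcal{A}^e)$; by Lemma \ref{semi-inj} the DG module ${}_{\mathcal{A}}I$ is semi-injective, so from the definition of the torsion functor
\[
R\Gamma_{\frak{m}}(\mathcal{A})=\Gamma_{\frak{m}}(I)=\varinjlim_{n\ge 1}\Hom_{\mathcal{A}}(\mathcal{A}/\mathcal{A}^{\ge n},I)=\varinjlim_{n\ge 1}R\Hom_{\mathcal{A}}(\mathcal{A}/\mathcal{A}^{\ge n},\mathcal{A})
\]
in $\mathscr{D}(\mathcal{A}^e)$. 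Here each $R\Hom_{\mathcal{A}}(\mathcal{A}/\mathcal{A}^{\ge n},\mathcal{A})$ inherits a DG $\mathcal{A}^e$-structure from the right $\mathcal{A}$-actions on the two-sided quotient $\mathcal{A}/\mathcal{A}^{\ge n}$ and on $\mathcal{A}$, and the transition maps are $\mathcal{A}^e$-linear. Hence it suffices to prove that each term $R\Hom_{\mathcal{A}}(\mathcal{A}/\mathcal{A}^{\ge n},\mathcal{A})$ has finite dimensional cohomology concentrated in cohomological degrees bounded above by a constant independent of $n$.

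For this I would run a dévissage on $\mathcal{A}/\mathcal{A}^{\ge n}$. Since $\mathcal{A}$ is connected and locally finite, the chain $0\subseteq \mathcal{A}^{\ge n-1}/\mathcal{A}^{\ge n}\subseteq\cdots\subseteq \mathcal{A}/\mathcal{A}^{\ge n}$ is a finite filtration of $\mathcal{A}/\mathcal{A}^{\ge n}$ by DG $\mathcal{A}$-submodules whose successive subquotients are the degree pieces $\mathcal{A}^{j}$ for $0\le j\le n-1$. On each such subquotient both the differential and the $\frak{m}$-action vanish, so $\mathcal{A}^{j}\cong (\dim_k\mathcal{A}^{j})\cdot \Sigma^{-j}{}_{\mathcal{A}}k$ as a DG $\mathcal{A}$-module, a finite direct sum of shifts of the trivial module. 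Applying the cohomological functor $R\Hom_{\mathcal{A}}(-,\mathcal{A})$ to the short exact sequences of this filtration yields triangles, so by induction on the filtration length together with the associated long exact cohomology sequences it is enough to control $H(R\Hom_{\mathcal{A}}(\Sigma^{-j}k,\mathcal{A}))$ for $0\le j\le n-1$.

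Now the left Gorenstein hypothesis enters: $\dim_k H(R\Hom_{\mathcal{A}}(k,\mathcal{A}))=1$, and by Lemma \ref{onedim} this cohomology is concentrated in a single degree, say $d$. Since $R\Hom_{\mathcal{A}}(\Sigma^{-j}k,\mathcal{A})\cong \Sigma^{j}R\Hom_{\mathcal{A}}(k,\mathcal{A})$ has one-dimensional cohomology sitting in degree $d-j\le d$ (because $j\ge 0$), the long exact sequences show that $H(R\Hom_{\mathcal{A}}(\mathcal{A}/\mathcal{A}^{\ge n},\mathcal{A}))$ is finite dimensional and vanishes in all degrees $>d$, with the bound $d$ uniform in $n$. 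This already gives the first assertion, that $R\Gamma_{\frak{m}}(\mathcal{A})$ is a direct limit of DG $\mathcal{A}^e$-modules with finite dimensional cohomology. For the second assertion, since cohomology commutes with direct limits, $H_{\frak{m}}(\mathcal{A})=\varinjlim_{n}H(R\Hom_{\mathcal{A}}(\mathcal{A}/\mathcal{A}^{\ge n},\mathcal{A}))$ is concentrated in degrees $\le d$, i.e. bounded above; dualizing, $H_{\frak{m}}(\mathcal{A})'$ is bounded below.

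The step I expect to be the main obstacle is making the dévissage genuinely uniform in $n$: one must verify that the filtration subquotients really are trivial DG modules, so that the only homological input is $R\Hom_{\mathcal{A}}(k,\mathcal{A})$, and—crucially—that every shift $\Sigma^{-j}k$ that occurs has $j\ge 0$, which is exactly what forces all $R\Hom$-cohomology into degrees $\le d$ and yields a bound independent of $n$. Local finiteness is what keeps each subquotient finite dimensional, and connectedness guarantees $\mathcal{A}^{0}=k$ so that the filtration is built entirely from shifts of the trivial module.
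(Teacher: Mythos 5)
Your proof is correct and follows essentially the same route as the paper's: take a semi-injective resolution $I$ of $\mathcal{A}$ over $\mathcal{A}^e$, write $R\Gamma_{\frak{m}}(\mathcal{A})=\varinjlim_{n\ge 1}\Hom_{\mathcal{A}}(\mathcal{A}/\mathcal{A}^{\ge n},I)$, and use the left Gorenstein condition to bound the cohomology of each term uniformly in $n$ before dualizing. The only difference is that you spell out the d\'evissage of $\mathcal{A}/\mathcal{A}^{\ge n}$ into shifts of ${}_{\mathcal{A}}k$, which the paper leaves implicit when it asserts the finite-dimensionality and the degree range of $H(\Hom_{\mathcal{A}}(\mathcal{A}/\mathcal{A}^{\ge n},I))$.
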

\begin{proof}
Let $I$ be a semi-injective resolution of $\mathcal{A}$ as a DG $\mathcal{A}^e$-module.
Since $\mathcal{A}$ is left Gorenstein, we have $$H(R\Hom_{\mathcal{A}}(k,\mathcal{A}))= H(\Hom_{\mathcal{A}}(k,I))= \Sigma^j k$$
for some $j\in \Bbb{Z}$. For any $n\ge 1$,  $H(\Hom_{\mathcal{A}}(\mathcal{A}/\mathcal{A}^{\ge
n},I))$ is finite dimensional since $\mathcal{A}/\mathcal{A}^{\ge n}$ is finite
dimensional. Hence $\Gamma_{\frak{m}}(I)= \varinjlim_{n\ge
1}\Hom_{\mathcal{A}}(\mathcal{A}/\mathcal{A}^{\ge n},I)$ is a direct limit of DG $\mathcal{A}^e$-modules
with finite dimensional cohomology.

Since $\mathcal{A}/\mathcal{A}^{\ge n}$ is concentrated in degrees from $0$ to
$n-1$ for each $n\ge 1$, $H(\Hom_{\mathcal{A}}(\mathcal{A}/\mathcal{A}^{\ge n},I))$ is
concentrated in degrees from  $-j+1-n$ to $-j$. Hence
$H(\Gamma_{\frak{m}}(I))'\cong H(\varinjlim_{n\ge
1}\Hom_{\mathcal{A}}(\mathcal{A}/\mathcal{A}^{\ge n},I))'\cong (\varinjlim_{n\ge 1}
H(\Hom_{\mathcal{A}}(\mathcal{A}/\mathcal{A}^{\ge n},I)))'$ is concentrated in degrees
$\ge j$.

\end{proof}

\begin{prop}\label{smoothGor}
Let $\mathcal{A}$ be a homologically smooth locally finite connected cochain DG algebra. Then
$\mathcal{A}$ is Gorenstein if and only if $R\Gamma_{\frak{m}}(\mathcal{A})\cong
\Sigma^i({}_{\mathcal{A}}\mathcal{A})'$ in $\mathscr{D}(\mathcal{A}\!^{op})$.
\end{prop}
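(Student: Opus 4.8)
The plan is to transport, via Matlis duality, both implications to a single comparison of the left DG module $X:=R\Gamma_{\frak{m}}(\mathcal{A})'$ with $\Sigma^{-i}{}_{\mathcal{A}}\mathcal{A}$. Since $\mathcal{A}$ is locally finite we have $({}_{\mathcal{A}}\mathcal{A})''\cong{}_{\mathcal{A}}\mathcal{A}$, and (as I will verify at the end) $H_{\frak{m}}(\mathcal{A})$ is locally finite, so the canonical map $R\Gamma_{\frak{m}}(\mathcal{A})\to R\Gamma_{\frak{m}}(\mathcal{A})''$ is a quasi-isomorphism; hence the asserted isomorphism $R\Gamma_{\frak{m}}(\mathcal{A})\cong\Sigma^i({}_{\mathcal{A}}\mathcal{A})'$ in $\mathscr{D}(\mathcal{A}^{op})$ is equivalent to $X\cong\Sigma^{-i}{}_{\mathcal{A}}\mathcal{A}$ in $\mathscr{D}(\mathcal{A})$. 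The common input comes from applying the Local Duality Theorem \ref{localdual} to $M=k$: because $\mathcal{A}$ is homologically smooth, ${}_{\mathcal{A}}k$ is compact, and because $H(k)$ is finite dimensional, Lemma \ref{torfindim} gives $R\Gamma_{\frak{m}}(k)\cong k$; thus Theorem \ref{localdual} yields $R\Hom_{\mathcal{A}}(k,X)\cong R\Gamma_{\frak{m}}(k)'\cong k$ in $\mathscr{D}(\mathcal{A}^{op})$, an identity that holds with no Gorenstein assumption.

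For the ``if'' direction I would simply substitute. If $R\Gamma_{\frak{m}}(\mathcal{A})\cong\Sigma^i({}_{\mathcal{A}}\mathcal{A})'$, then dualizing and using $({}_{\mathcal{A}}\mathcal{A})''\cong{}_{\mathcal{A}}\mathcal{A}$ gives $X\cong\Sigma^{-i}{}_{\mathcal{A}}\mathcal{A}$, so the identity above becomes $\Sigma^{-i}R\Hom_{\mathcal{A}}(k,\mathcal{A})\cong k$, i.e. $R\Hom_{\mathcal{A}}(k,\mathcal{A})\cong\Sigma^i k$. In particular $\dim_kH(R\Hom_{\mathcal{A}}(k,\mathcal{A}))=1$, so $\mathcal{A}$ is left Gorenstein and hence Gorenstein.

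For the ``only if'' direction the point is to make the Gorenstein hypothesis bite through the compactness of $k$. Since $\mathcal{A}$ is homologically smooth, ${}_{\mathcal{A}}k$ has a minimal semi-free resolution $F$ with a finite semi-basis, and for such $F$ the complex $\Hom_{\mathcal{A}}(F,\mathcal{A})$ is a finite semi-free right DG module, so the evaluation map $\Hom_{\mathcal{A}}(F,\mathcal{A})\otimes_{\mathcal{A}}(-)\to\Hom_{\mathcal{A}}(F,-)$ is a chain-level isomorphism; passing to $\mathscr{D}$ this reads $R\Hom_{\mathcal{A}}(k,\mathcal{A})\otimes_{\mathcal{A}}^{\mathbf L}(-)\cong R\Hom_{\mathcal{A}}(k,-)$. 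By Remark \ref{HSM}(1) and Lemma \ref{onedim}, the Gorenstein hypothesis gives $R\Hom_{\mathcal{A}}(k,\mathcal{A})\cong\Sigma^i k_{\mathcal{A}}$ for some $i$, so evaluating at $M=X$ and comparing with $R\Hom_{\mathcal{A}}(k,X)\cong k$ yields $\Sigma^i(k\otimes_{\mathcal{A}}^{\mathbf L}X)\cong k$, i.e. $k\otimes_{\mathcal{A}}^{\mathbf L}X\cong\Sigma^{-i}k$ is one dimensional. Now by Lemma \ref{below} the cohomology $H(X)=H_{\frak{m}}(\mathcal{A})'$ is bounded below, so $X$ has a minimal semi-free resolution $G=\mathcal{A}\otimes W\to X$; minimality ($\partial_G(G)\subseteq\frak{m}G$) kills the differential on $k\otimes_{\mathcal{A}}G$, whence $k\otimes_{\mathcal{A}}^{\mathbf L}X\cong W$ and therefore $W\cong\Sigma^{-i}k$ is one dimensional. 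A semi-free module on a single generator has zero differential, so $X\cong G\cong\Sigma^{-i}{}_{\mathcal{A}}\mathcal{A}$. Dualizing this, and noting that $H(X)\cong\Sigma^{-i}H(\mathcal{A})$ is locally finite (which in turn forces $H_{\frak{m}}(\mathcal{A})$ to be locally finite, so $R\Gamma_{\frak{m}}(\mathcal{A})''\cong R\Gamma_{\frak{m}}(\mathcal{A})$), gives $R\Gamma_{\frak{m}}(\mathcal{A})\cong\Sigma^i({}_{\mathcal{A}}\mathcal{A})'$ in $\mathscr{D}(\mathcal{A}^{op})$.

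I expect the genuine work to lie entirely in the ``only if'' direction, in the two steps where the hypothesis is actually used: establishing the functorial isomorphism $R\Hom_{\mathcal{A}}(k,\mathcal{A})\otimes_{\mathcal{A}}^{\mathbf L}(-)\cong R\Hom_{\mathcal{A}}(k,-)$ from the finiteness of the semi-basis of $F$, and then reading the one-dimensionality of the semi-basis $W$ off $k\otimes_{\mathcal{A}}^{\mathbf L}X$ via minimality. By contrast, the duality bookkeeping---keeping track of left versus right structures and justifying the biduality $R\Gamma_{\frak{m}}(\mathcal{A})''\cong R\Gamma_{\frak{m}}(\mathcal{A})$ through local finiteness of $H_{\frak{m}}(\mathcal{A})$---is routine but should be carried out carefully, since it is what converts the statement about $X$ back into the claimed statement about $R\Gamma_{\frak{m}}(\mathcal{A})$.
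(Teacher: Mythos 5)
Your proof is correct and follows essentially the same route as the paper's: both directions hinge on applying the local duality theorem to the compact module $k$ (with $R\Gamma_{\frak{m}}(k)\cong k$ from Lemma \ref{torfindim}), using compactness of $k$ for the evaluation isomorphism $R\Hom_{\mathcal{A}}(k,-)\cong R\Hom_{\mathcal{A}}(k,\mathcal{A})\otimes_{\mathcal{A}}^{\mathbf L}(-)$, and then reading off from Lemma \ref{below} and a minimal semi-free resolution of $R\Gamma_{\frak{m}}(\mathcal{A})'$ that it is $\Sigma^{-i}\mathcal{A}$. Your only departure is organizational (working throughout with $X=R\Gamma_{\frak{m}}(\mathcal{A})'$) together with an explicit justification of the biduality $R\Gamma_{\frak{m}}(\mathcal{A})''\cong R\Gamma_{\frak{m}}(\mathcal{A})$ via local finiteness of $H_{\frak{m}}(\mathcal{A})$, a step the paper passes over silently.
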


\begin{proof}
By Lemma \ref{torfindim},  we have $H_{\frak{m}}(k) = k$. If $\mathcal{A}$ is
Gorenstein, then there exists $i\in \Bbb{Z}$ such that
$R\Hom_{\mathcal{A}}(k,\mathcal{A})\cong \Sigma^ik_{\mathcal{A}}$ in $\mathscr{D}(\mathcal{A}\!^{op})$. By
Theorem \ref{localdual},
\begin{align*}
H_{\frak{m}}(k)' & \cong H(R\Hom_{\mathcal{A}}(k,R\Gamma_{\frak{m}}(\mathcal{A})') \\
&\cong H(R\Hom_{\mathcal{A}}(k,\mathcal{A}){}^L\otimes_{\mathcal{A}} R\Gamma_{\frak{m}}(\mathcal{A})') \\
&\cong H(\Sigma^ik{}^L\otimes_{\mathcal{A}}R\Gamma_{\frak{m}}(\mathcal{A})').
\end{align*}
Hence $H(\Sigma^ik{}^L\otimes_{\mathcal{A}}R\Gamma_{\frak{m}}(\mathcal{A})') \cong k$.
 By Lemma \ref{below} and \cite[Proposition 2.4]{MW1}, there is a minimal
 semi-free resolutions ${}_{\mathcal{A}}F \stackrel{\simeq}{\to}
 {}_{\mathcal{A}}(R\Gamma_{\frak{m}}(\mathcal{A})')$. Since
$$H(\Sigma^ik{}^L\otimes_{\mathcal{A}}R\Gamma_{\frak{m}}(\mathcal{A})') \cong
\Sigma^ik\otimes_{\mathcal{A}} F =k, $$ we have  $F = \Sigma^{-i}{}_{\mathcal{A}}\mathcal{A}$.
This implies that $R\Gamma_{\frak{m}}(\mathcal{A})_{\mathcal{A}}\cong\Sigma^i ({}_{\mathcal{A}}\mathcal{A})'$
in $\mathscr{D}(\mathcal{A}\!^{op})$.

Conversely, if $R\Gamma_{\frak{m}}(\mathcal{A})_{\mathcal{A}}\cong \Sigma^i({}_{\mathcal{A}}\mathcal{A})'$ in
$\mathscr{D}(\mathcal{A}\!^{op})$, then by Theorem \ref{localdual}, we have
$$H(R\Hom_{\mathcal{A}}(k,\mathcal{A}))\cong H(R\Hom_{\mathcal{A}}(k, \Sigma^i
R\Gamma_{\frak{m}}(\mathcal{A})'))\cong \Sigma^iH_{\frak{m}}(k) =
\Sigma^ik.$$ Since $\mathcal{A}$ is homologically smooth, $\mathcal{A}$ is Gorenstein
by Remark \ref{HSM}.
\end{proof}

\begin{rem}
For any homologically smooth locally finite connected cochain DG algebra $\mathcal{A}$, we can
similarly prove that $\mathcal{A}$ is Gorenstein if and only if
$R\Gamma_{\frak{m}\!^{op}}(\mathcal{A})\cong \Sigma^i(\mathcal{A}_\mathcal{A})'$ in
$\mathscr{D}(\mathcal{A})$, by Remark \ref{op} and Remark \ref{HSM}.
\end{rem}

\begin{thm}\label{smoothgoren}
Let $\mathcal{A}$ be a Noetherian, homologically smooth connected cochain DG
algebra. Then $\mathcal{A}$ is Gorenstein if and only if
$R\Gamma_{\frak{m}}(\mathcal{A})\cong  L \otimes_{\mathcal{A}}\mathcal{A}'$ in $\mathscr{D}(\mathcal{A}^e)$ for
some invertible DG $\mathcal{A}$-bimodule $L=\Sigma^i \mathcal{A}(\phi)$ with $\phi\in \mathrm{Aut}_{dg}(\mathcal{A})$.
\end{thm}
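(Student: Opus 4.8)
The plan is to reduce the two-sided statement to the one-sided Proposition \ref{smoothGor} together with the structural Lemma \ref{lrgor}; since $\mathcal{A}$ is Noetherian it is locally finite, so both of these and their $\mathcal{A}\!^{op}$-analogues apply. For the forward implication I would assume $\mathcal{A}$ Gorenstein and set $M=R\Gamma_{\frak{m}}(\mathcal{A})$, realized as the sub-bimodule $\Gamma_{\frak{m}}(I)$ of a semi-injective resolution $\mathcal{A}\to I$ over $\mathcal{A}^e$ (here $\Gamma_{\frak{m}}(I)$ is a DG $\mathcal{A}^e$-module because $\frak{m}$ is two-sided, and by Lemma \ref{semi-inj} both ${}_{\mathcal{A}}I$ and $I_{\mathcal{A}}$ are semi-injective, so this computes $R\Gamma_{\frak{m}}(\mathcal{A})$ one-sidedly as well). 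Proposition \ref{smoothGor} then gives $M_{\mathcal{A}}\cong\Sigma^i({}_{\mathcal{A}}\mathcal{A})'$ in $\mathscr{D}(\mathcal{A}\!^{op})$, where $i$ is the Gorenstein shift; by Remark \ref{HSM}(1) the same $i$ governs the right Gorenstein condition, so the $\mathcal{A}\!^{op}$-version of Proposition \ref{smoothGor} yields ${}_{\mathcal{A}}\big(R\Gamma_{\frak{m}\!^{op}}(\mathcal{A})\big)\cong\Sigma^i(\mathcal{A}_{\mathcal{A}})'$ with the same shift.

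The crux is then to identify the full bimodule, i.e. to prove the balance $R\Gamma_{\frak{m}}(\mathcal{A})\cong R\Gamma_{\frak{m}\!^{op}}(\mathcal{A})$ in $\mathscr{D}(\mathcal{A}^e)$; granting this, the two displays above give ${}_{\mathcal{A}}M\cong\Sigma^i(\mathcal{A}_{\mathcal{A}})'$ and $M_{\mathcal{A}}\cong\Sigma^i({}_{\mathcal{A}}\mathcal{A})'$ with matching $i$, and Lemma \ref{lrgor} produces an invertible bimodule $L=\Sigma^i\mathcal{A}(\phi)$ with $M\cong L\otimes_{\mathcal{A}}\mathcal{A}'$, which is exactly the claim. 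To prove the balance I would pass to the double torsion $\Gamma_{\frak{m}}\Gamma_{\frak{m}\!^{op}}(I)=\Gamma_{\frak{m}\!^{op}}\Gamma_{\frak{m}}(I)$ inside $I$ and show that each one-sided inclusion is a quasi-isomorphism of DG $\mathcal{A}^e$-modules. Concretely, $\Gamma_{\frak{m}}(I)=R\Gamma_{\frak{m}}(\mathcal{A})$ is, by Lemma \ref{below}, a direct limit of DG $\mathcal{A}^e$-modules with finite dimensional cohomology, hence a direct limit of such DG $\mathcal{A}\!^{op}$-modules; moreover, by the Proposition at the end of Section 2 it is a direct limit of semi-injective DG $\mathcal{A}\!^{op}$-modules, so $R\Gamma_{\frak{m}\!^{op}}(\Gamma_{\frak{m}}(I))\cong\Gamma_{\frak{m}\!^{op}}\Gamma_{\frak{m}}(I)$. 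The $\mathcal{A}\!^{op}$-version of Lemma \ref{torfindim} then forces the canonical map $\Gamma_{\frak{m}\!^{op}}\Gamma_{\frak{m}}(I)\to\Gamma_{\frak{m}}(I)$ to be a quasi-isomorphism; the symmetric argument treats $\Gamma_{\frak{m}}\Gamma_{\frak{m}\!^{op}}(I)\to\Gamma_{\frak{m}\!^{op}}(I)$, and since the two double torsions coincide as sub-bimodules of $I$, the balance follows.

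For the converse I would suppose $R\Gamma_{\frak{m}}(\mathcal{A})\cong L\otimes_{\mathcal{A}}\mathcal{A}'$ with $L=\Sigma^i\mathcal{A}(\phi)$ and simply restrict this isomorphism along $\mathscr{D}(\mathcal{A}^e)\to\mathscr{D}(\mathcal{A}\!^{op})$. Using the natural right-module identification $\mathcal{A}(\phi)\otimes_{\mathcal{A}}\mathcal{A}'\cong(\mathcal{A}')_{\mathcal{A}}=({}_{\mathcal{A}}\mathcal{A})'$, where the twist by $\phi$ is absorbed into the left action and so leaves the right structure untouched, one gets $R\Gamma_{\frak{m}}(\mathcal{A})\cong\Sigma^i({}_{\mathcal{A}}\mathcal{A})'$ in $\mathscr{D}(\mathcal{A}\!^{op})$, whence $\mathcal{A}$ is Gorenstein by Proposition \ref{smoothGor}.

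The main obstacle is precisely the balance isomorphism in $\mathscr{D}(\mathcal{A}^e)$: the torsion functor is defined only through the left (resp. right) action, so a priori $R\Gamma_{\frak{m}}(\mathcal{A})$ and $R\Gamma_{\frak{m}\!^{op}}(\mathcal{A})$ carry unrelated bimodule structures, and the one-sided Proposition \ref{smoothGor} controls only one side of each. The double-torsion comparison is what forces them to agree, and it is here that the finiteness packaged in Lemma \ref{below} (via the Gorenstein and locally finite hypotheses) and the homological smoothness underlying Lemma \ref{torfindim} are indispensable. A secondary point requiring care is that every comparison map must be a morphism of DG $\mathcal{A}^e$-modules, so that Lemma \ref{lrgor} can be applied to a genuine bimodule model of $M$ rather than merely to its one-sided derived avatars.
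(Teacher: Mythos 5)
Your argument is correct (at the same level of rigor as the paper's own), but the crucial forward direction follows a genuinely different route. Both you and the paper get the right-module structure of $R\Gamma_{\frak{m}}(\mathcal{A})$ from Proposition \ref{smoothGor} and both finish by feeding a two-sided identification into Lemma \ref{lrgor}; the divergence is in how the left-module structure is obtained. The paper does it by hand inside the semi-injective resolution $I$: using Gorensteinness it splits $\Hom_{\mathcal{A}}(k,I)$ as $ke\oplus X$ with $X$ quasi-trivial, identifies $\Gamma_{\frak{m}}(I)^{\#}$ via a graded injective-envelope argument, and then constructs an explicit zig-zag of quasi-isomorphisms $\Gamma_{\frak{m}}(I)\to\Gamma_{\frak{m}}(I)''\to(\mathcal{A}_{\mathcal{A}})'\otimes(ke)''$ to conclude $\Gamma_{\frak{m}}(I)\simeq\Sigma^i(\mathcal{A}_{\mathcal{A}})'$ as left DG $\mathcal{A}$-modules. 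You instead first prove the balance $R\Gamma_{\frak{m}}(\mathcal{A})\cong R\Gamma_{\frak{m}\!^{op}}(\mathcal{A})$ in $\mathscr{D}(\mathcal{A}^e)$ by the double-torsion argument and then import the left structure from the op-version of Proposition \ref{smoothGor}. Your balance proof is sound and, importantly, non-circular: the terms $\Hom_{\mathcal{A}}(\mathcal{A}/\mathcal{A}^{\ge n},I)$ of the directed system are simultaneously of finite-dimensional cohomology (Lemma \ref{below}, which is where Gorensteinness enters) and semi-injective over $\mathcal{A}\!^{op}$ (the unlabelled Proposition closing Section 2), so the op-versions of Lemmas \ref{comlim} and \ref{torfindim} identify the inclusion $\Gamma_{\frak{m}\!^{op}}\Gamma_{\frak{m}}(I)\hookrightarrow\Gamma_{\frak{m}}(I)$ with a quasi-isomorphism, and symmetrically on the other side, the two double torsions being equal inside $I$. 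This is worth noting because the paper only states balance afterwards, as Proposition \ref{lrcong}, and proves it there \emph{using} Theorem \ref{smoothgoren}; your route reverses the logical order legitimately, and makes Proposition \ref{lrcong} an immediate by-product, at the cost of more direct-limit machinery, whereas the paper's envelope computation is more explicit at the module level. Your converse is also cleaner than the paper's: you just restrict the bimodule isomorphism to $\mathscr{D}(\mathcal{A}\!^{op})$, where the twist is absorbed into the left action (indeed $\mathcal{A}(\phi)\otimes_{\mathcal{A}}\mathcal{A}'$ has untwisted right structure), and quote Proposition \ref{smoothGor}, while the paper reruns a local-duality computation with $R\Hom_{\mathcal{A}}(k,-)$ and compactness of $k$. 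One shared caveat: like the paper itself, you apply Lemma \ref{lrgor} to isomorphisms in derived categories, although its proof manipulates honest module generators; that looseness is inherited from the paper, not introduced by you.
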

\begin{proof}
Assume that $R\Gamma_{\frak{m}}(\mathcal{A})\cong L \otimes_{\mathcal{A}}\mathcal{A}'$ in
$\mathscr{D}(\mathcal{A}^e)$, where $L=\Sigma^i \mathcal{A}(\phi)$ with $\phi\in \mathrm{Aut}_{dg}(\mathcal{A})$.
Then
\begin{align*}
R\Hom_{\mathcal{A}}(k, R\Gamma_{\frak{m}}(\mathcal{A})') &\cong R\Hom_{\mathcal{A}}(k, \Hom_{k}(L\otimes_{\mathcal{A}}\mathcal{A}',k))\\
                                                         &\stackrel{(1)}{\cong} R\Hom_{\mathcal{A}}(k, \Hom_{\mathcal{A}\!^{op}}(L,\mathcal{A}''))\\
                                                         &\stackrel{(2)}{\cong} R\Hom_{\mathcal{A}}(k, \Hom_{\mathcal{A}\!^{op}}(L,\mathcal{A}))\\
                                                         &\stackrel{(3)}{\cong} R\Hom_{\mathcal{A}}(k, \mathcal{A})\otimes_{\mathcal{A}} \Hom_{\mathcal{A}\!^{op}}(L,\mathcal{A}),
\end{align*}
where $(1)$, $(2)$ and $(3)$ are obtained by the fact that $k\in \mathscr{D}^c(\mathcal{A})$.
On the other hand, we have $R\Hom_{\mathcal{A}}(k,  R\Gamma_{\frak{m}}(\mathcal{A})')
\cong R\Gamma_{\frak{m}}(k)'$ by Theorem \ref{localdual}. By Lemma
\ref{torfindim} and Lemma \ref{onedim}, $(R\Gamma_{\frak{m}}(k)')_{\mathcal{A}}
\cong k_{\mathcal{A}}$ in $\mathscr{D}(\mathcal{A}\!^{op})$. Therefore,
$$R\Hom_{\mathcal{A}}(k,\mathcal{A})\otimes_{\mathcal{A}}\Hom_{\mathcal{A}\!^{op}}(L,\mathcal{A}) \cong k_{\mathcal{A}}$$ in
$\mathscr{D}(\mathcal{A}\!^{op})$. Since $L=\Sigma^i\mathcal{A}(\phi)$, we have $R\Hom_{\mathcal{A}}(k,\mathcal{A})\cong k_{\mathcal{A}}\otimes_{\mathcal{A}}
\Sigma^i\mathcal{A}(\phi)$. Thus $\mathcal{A}$ is Gorenstein.

Conversely, if $\mathcal{A}$ is Gorenstein, we should prove that
$R\Gamma_{\frak{m}}(\mathcal{A})\cong  L\otimes_{\mathcal{A}}\mathcal{A}'$ in $\mathscr{D}(\mathcal{A}^e)$ for
 some invertible DG $\mathcal{A}$-bimodule $L$. By Lemma \ref{lrgor}, it suffices to prove that $R\Gamma_{\frak{m}}(\mathcal{A})\cong
\Sigma^i({}_{\mathcal{A}}\mathcal{A})'$ in $\mathscr{D}(\mathcal{A}\!^{op})$ and
$R\Gamma_{\frak{m}}(\mathcal{A})\cong \Sigma^i(\mathcal{A}_{\mathcal{A}})'$ in $\mathscr{D}(\mathcal{A})$.

 By Proposition \ref{smoothGor},  $R\Gamma_{\frak{m}}(\mathcal{A})\cong
\Sigma^i({}_{\mathcal{A}}\mathcal{A})'$ in $\mathscr{D}(\mathcal{A}\!^{op})$, for some $i\in \Bbb{Z}$.
 Let
$\mathcal{A}\stackrel{\simeq}{\to} I$ be a semi-injective resolution of
${}_{\mathcal{A}}\mathcal{A}$. As a graded $\mathcal{A}^{\#}$-module, $\Hom_{\mathcal{A}}(k,I)^{\#} \cong
\bigoplus_{j\in J}\Sigma^{u_j} ({}_{\mathcal{A}^{\#}}k)$. Since $\mathcal{A}$ is
Gorenstein,  $\dim_kH(\Hom_{\mathcal{A}}(k,I))=1$. This implies that there is a
graded cocycle element $e\in \Hom_{\mathcal{A}}(k,I)$ of degree $i$ such
that $\Hom_{\mathcal{A}}(k,I)\cong {}_{\mathcal{A}}ke\oplus {}_{\mathcal{A}}X$ as a DG $\mathcal{A}$-module, where
$X$ is a quasi-trivial DG $\mathcal{A}$-module with $\frak{m}X=0$. Therefore,
$(\mathcal{A}_{\mathcal{A}})^{\#'}\otimes ke\oplus ((\mathcal{A}_{\mathcal{A}})^{\#'}\otimes X^{\#})$ is an
injective envelope of $\Hom_{\mathcal{A}}(k,I)^{\#}$. Now let $E^{\#}$ be an
injective $\mathcal{A}^{\#}$-module such that $\Hom_{\mathcal{A}}(k,I)^{\#}$ is an
essential graded $\mathcal{A}^{\#}$-submodule of $E^{\#}$. Then $E^{\#}\cong
(\mathcal{A}_{\mathcal{A}})^{\#'}\otimes ke\oplus ((\mathcal{A}_{\mathcal{A}})^{\#'}\otimes X^{\#})$ and
$E^{\#}\subseteq \Gamma_{\frak{m}}(I)^{\#}$. The graded
$A^{\#}$-module $I^{\#}$ can be decomposed as $I^{\#}\cong
E^{\#}\oplus J^{\#}$, for some injective $\mathcal{A}^{\#}$-module $J^{\#}$.
Since $\Hom_{\mathcal{A}}(k,I)^{\#} \bigcap J^{\#}=0$, it follows that
$\Gamma_{\frak{m}^{\#}}(J^{\#}) = 0$. Hence
$\Gamma_{\frak{m}}(I)^{\#}=\Gamma_{\frak{m}^{\#}}(I^{\#})\subseteq
E^{\#}.$ Therefore, $$ \Gamma_{\frak{m}}(I)^{\#} = \mathcal{A}^{\#'}\otimes
ke\oplus (\mathcal{A}^{\#'}\otimes X^{\#}).$$ We have
$$\Gamma_{\frak{m}}(I)^{\#'}\cong (ke)'\otimes \mathcal{A}_{\mathcal{A}^{\#}}\oplus
((\mathcal{A}_{\mathcal{A}})^{\#'}\otimes X^{\#})'.$$ Since $$\Gamma_{\frak{m}}(I)=
\varinjlim_{n\ge 1}\Hom_{\mathcal{A}}(\mathcal{A}/\mathcal{A}^{\ge n},I),$$ $\Hom_{\mathcal{A}}(k,I)$ is a DG
$\mathcal{A}$-submodule of $\Gamma_{\frak{m}}(I)$. Hence
$\partial_{\Gamma_{\frak{m}}(I)}(e) =0$. So
$\partial_{\Gamma_{\frak{m}}(I)'}|_{(ke)'} =0$. For any $a\in \mathcal{A}$, we
have $\partial_{\Gamma_{\frak{m}}(I)'}(e'\otimes a) = (-1)^{|e'|}
e'\otimes\partial_{\mathcal{A}}(a)$ since $\Gamma_{\frak{m}}(I)'$ is a DG
$\mathcal{A}\!^{op}$-module. This implies that $(ke)'\otimes \mathcal{A}_{\mathcal{A}}$ is a DG
$\mathcal{A}\!^{op}$-submodule of $\Gamma_{\frak{m}}(I)'$.
 Let $$\iota: (ke)'\otimes \mathcal{A}_{\mathcal{A}} \to \Gamma_{\frak{m}}(I)'$$
be the inclusion map. Since  $R\Gamma_{\frak{m}}(\mathcal{A})\cong
\Sigma^i({}_{\mathcal{A}}\mathcal{A})'$ in $\mathscr{D}(\mathcal{A}\!^{op})$, $H_{\frak{m}}(\mathcal{A})\cong
\Sigma^i H(\mathcal{A})'$ as a graded $k$-vector space. It is easy to check
that $H(\iota)$ is an isomorphism as a morphism of graded $k$-vector
space. This implies that $\iota$ is a quasi-isomorphism as a
morphism of DG $\mathcal{A}\!^{op}$-modules. Therefore, the composition of DG
morphisms $$\Gamma_{\frak{m}}(I)\to
\Gamma_{\frak{m}}(I)''\stackrel{\iota'}{\to} (\mathcal{A}_{\mathcal{A}})'\otimes (ke)'' $$
is also a quasi-isomorphism. Clearly, the DG $\mathcal{A}$-module
$$(\mathcal{A}_{\mathcal{A}})'\otimes (ke)''\cong (\mathcal{A}_{\mathcal{A}})'\otimes ke.$$ So
$\Gamma_{\frak{m}}(I) \simeq \Sigma^i (\mathcal{A}_{\mathcal{A}})'$ as a DG $\mathcal{A}$-module,
for some $i\in \Bbb{Z}$. By Lemma \ref{lrgor}, $R\Gamma_{\frak{m}}(\mathcal{A})\cong L \otimes_{\mathcal{A}} \mathcal{A}' $ in
$\mathscr{D}(\mathcal{A}^e)$, for some invertible DG $\mathcal{A}$-bimodule $L=\Sigma^i \mathcal{A}(\phi)$ with $\phi\in \mathrm{Aut}_{dg}(\mathcal{A})$.

\end{proof}

\begin{rem}\label{opcase}
By considering $\dim_kH(k{}^L\otimes_{\mathcal{A}}k)$,
one sees that $\mathcal{A}$ is homologically smooth if and only if $\mathcal{A}\!^{op}$ is homologically smooth. It follows that
$\mathcal{A}^{op}$ is also a homoloigcally smooth and Gorenstein DG algebra when the conditions in Theorem \ref{smoothGor} hold.
By Theorem \ref{smoothGor}, we have
$R\Gamma_{\frak{m}\!^{op}}(\mathcal{A}\!^{op})\cong Q\otimes_{\mathcal{A}\!^{op}}(\mathcal{A}\!^{op})'\cong \mathcal{A}'\otimes_{\mathcal{A}}Q$
in $\mathscr{D}(\mathcal{A}^e)$, where $Q=\Sigma^i\mathcal{A}(\phi)$ is an invertible DG $\mathcal{A}$-bimodule with $\phi\in \mathrm{Aut}_{dg}(\mathcal{A})$.

\end{rem}

\begin{prop}\label{lrcong}
Let $\mathcal{A}$ be a Noetherian, homologically smooth and Gorenstein
connected cochain DG algebra. Then $$R\Gamma_{\frak{m}}(\mathcal{A})\cong
R\Gamma_{\frak{m}_{\mathcal{A}^e}}(\mathcal{A})\cong R\Gamma_{\frak{m}^{op}}(\mathcal{A}\!^{op})$$ in
$\mathscr{D}(\mathcal{A}^e)$.
\end{prop}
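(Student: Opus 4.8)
The plan is to compute the three derived functors on one and the same resolution, reducing the assertion to the elementary identity $\Gamma_{\frak{m}_{\mathcal{A}^e}}=\Gamma_{\frak{m}^{op}}\circ\Gamma_{\frak{m}}=\Gamma_{\frak{m}}\circ\Gamma_{\frak{m}^{op}}$ between the naive torsion functors. First I would fix a semi-injective resolution $\eta:\mathcal{A}\stackrel{\simeq}{\to}I$ in $\mathscr{C}(\mathcal{A}^e)$; by Lemma \ref{semi-inj} the module $I$ is then semi-injective both over $\mathcal{A}$ and over $\mathcal{A}^{op}$. Each of $\Gamma_{\frak{m}}$, $\Gamma_{\frak{m}^{op}}$ and $\Gamma_{\frak{m}_{\mathcal{A}^e}}$ sends $\mathscr{C}(\mathcal{A}^e)$ to itself, because a one-sided torsion submodule of a DG bimodule is again a sub-bimodule; hence this single $I$ computes all three, giving $R\Gamma_{\frak{m}_{\mathcal{A}^e}}(\mathcal{A})=\Gamma_{\frak{m}_{\mathcal{A}^e}}(I)$, $R\Gamma_{\frak{m}}(\mathcal{A})=\Gamma_{\frak{m}}(I)$ and $R\Gamma_{\frak{m}^{op}}(\mathcal{A}^{op})=\Gamma_{\frak{m}^{op}}(I)$ in $\mathscr{D}(\mathcal{A}^e)$, the last because the regular bimodule is one and the same object whether written $\mathcal{A}$ or $\mathcal{A}^{op}$.

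At the level of honest submodules, an element of $I$ lies in $\Gamma_{\frak{m}_{\mathcal{A}^e}}(I)$ iff it is simultaneously left and right torsion, so $\Gamma_{\frak{m}_{\mathcal{A}^e}}(I)=\Gamma_{\frak{m}^{op}}(\Gamma_{\frak{m}}(I))\hookrightarrow\Gamma_{\frak{m}}(I)$; it therefore suffices to show this inclusion is a quasi-isomorphism (and, symmetrically, that $\Gamma_{\frak{m}_{\mathcal{A}^e}}(I)=\Gamma_{\frak{m}}(\Gamma_{\frak{m}^{op}}(I))\hookrightarrow\Gamma_{\frak{m}^{op}}(I)$ is). Writing $J:=\Gamma_{\frak{m}}(I)=R\Gamma_{\frak{m}}(\mathcal{A})$, I would argue in two steps. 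Step one: $J$ is $\Gamma_{\frak{m}^{op}}$-acyclic. Regarding $I$ as a semi-injective module over $\mathcal{A}^{op}\otimes\mathcal{A}\cong\mathcal{A}^e$ and invoking the proposition of the previous section stating that $\Gamma_{\frak{m}_{\mathcal{B}}}(E)$ is a direct limit of semi-injective DG $\mathcal{A}$-modules whenever $E$ is semi-injective over $\mathcal{A}\otimes\mathcal{B}$ (applied with the two tensor factors interchanged), $J=\varinjlim_{n}\Hom_{\mathcal{A}}(\mathcal{A}/\mathcal{A}^{\ge n},I)$ is a direct limit of DG $\mathcal{A}^{op}$-modules that are semi-injective. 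Since $\mathcal{A}^{op}$ is again homologically smooth and locally finite, Lemma \ref{comlim} lets $R\Gamma_{\frak{m}^{op}}$ commute with this direct limit, while the naive torsion functor commutes with filtered colimits; as the derived and naive torsions coincide on each semi-injective term, we obtain $R\Gamma_{\frak{m}^{op}}(J)=\Gamma_{\frak{m}^{op}}(J)$. Step two: by Lemma \ref{below} applied to the Gorenstein algebra $\mathcal{A}$, $J$ is a direct limit of DG $\mathcal{A}^e$-modules with finite-dimensional cohomology, hence of DG $\mathcal{A}^{op}$-modules with finite-dimensional cohomology, so Lemma \ref{torfindim} (for $\mathcal{A}^{op}$) shows the canonical map $R\Gamma_{\frak{m}^{op}}(J)\to J$ is a quasi-isomorphism. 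Combining the two steps identifies the inclusion $\Gamma_{\frak{m}^{op}}(J)\hookrightarrow J$, i.e. $\Gamma_{\frak{m}_{\mathcal{A}^e}}(I)\hookrightarrow\Gamma_{\frak{m}}(I)$, as a quasi-isomorphism.

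Exchanging the roles of $\frak{m}$ and $\frak{m}^{op}$ throughout yields the second quasi-isomorphism: here one uses that $\mathcal{A}^{op}$ is itself homologically smooth and Gorenstein (Remark \ref{opcase}), so that the $\mathcal{A}^{op}$-analogue of Lemma \ref{below} is available, and that $\Gamma_{\frak{m}^{op}}(I)$ is a direct limit of semi-injective DG $\mathcal{A}$-modules. The two quasi-isomorphisms then assemble into the desired chain $R\Gamma_{\frak{m}}(\mathcal{A})\cong R\Gamma_{\frak{m}_{\mathcal{A}^e}}(\mathcal{A})\cong R\Gamma_{\frak{m}^{op}}(\mathcal{A}^{op})$ in $\mathscr{D}(\mathcal{A}^e)$.

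The main obstacle is Step one, the $\Gamma_{\frak{m}^{op}}$-acyclicity of $J=\Gamma_{\frak{m}}(I)$: the point is that $J$ need not itself be semi-injective over $\mathcal{A}^{op}$, so $R\Gamma_{\frak{m}^{op}}(J)$ cannot be read off naively. The device that resolves this is to present $J$ as a direct limit of semi-injective $\mathcal{A}^{op}$-modules and then push $R\Gamma_{\frak{m}^{op}}$ through the limit via Lemma \ref{comlim}. I would also take care to check that, under this acyclicity identification, the canonical morphism furnished by Lemma \ref{torfindim} really is the honest inclusion $\Gamma_{\frak{m}^{op}}(J)\hookrightarrow J$, so that the two steps may be composed without ambiguity.
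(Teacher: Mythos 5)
Your proof is correct, but it follows a genuinely different route from the paper's. The paper starts the same way (a semi-injective resolution $\mathcal{A}\stackrel{\simeq}{\to}I$ over $\mathcal{A}^e$, plus Lemma \ref{semi-inj}), and its computation of $\Gamma_{\frak{m}_{\mathcal{A}^e}}(I)$ via the cofinality of $(\mathcal{A}^e)^{\ge n}$ with $\mathcal{A}^{\ge n}\otimes\mathcal{A}^{op}+\mathcal{A}\otimes(\mathcal{A}^{op})^{\ge n}$ and the adjunction of Lemma \ref{tensorcong} is the same fact as your identity $\Gamma_{\frak{m}_{\mathcal{A}^e}}=\Gamma_{\frak{m}^{op}}\circ\Gamma_{\frak{m}}$; but from that point on the paper leans on Theorem \ref{smoothgoren}: it substitutes the explicit dualizing bimodule $L\otimes_{\mathcal{A}}\mathcal{A}'$ for the inner limit $\varinjlim_m\Hom_{\mathcal{A}}(\mathcal{A}/\mathcal{A}^{\ge m},I)$ (after replacing each $\mathcal{A}^{op}/(\mathcal{A}^{op})^{\ge n}$ by a finite semi-free resolution and commuting Homs past the limit), and then observes that $L\otimes_{\mathcal{A}}\mathcal{A}'$ is already $\frak{m}^{op}$-torsion, so the outer limit collapses. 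You avoid Theorem \ref{smoothgoren} altogether: Gorensteinness enters only through Lemma \ref{below}, and the quasi-isomorphism $\Gamma_{\frak{m}^{op}}(J)\hookrightarrow J$ is extracted from the abstract machinery (the unlabeled proposition at the end of Section 2, Lemma \ref{comlim}, Lemma \ref{torfindim}). What each buys: your argument is symmetric in the two sides (the second isomorphism really is the mirror image, whereas the paper's "similarly" hides a rerun of the whole chain), and it needs only left/right Gorensteinness rather than the full identification of $R\Gamma_{\frak{m}}(\mathcal{A})$ as a twisted Matlis dual; the paper's argument, by carrying $L\otimes_{\mathcal{A}}\mathcal{A}'$ through, re-derives the explicit form of all three functors' values, which is the shape actually invoked later in Theorem \ref{biduality}. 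One refinement worth making in your write-up: the compatibility check you flag at the end (that the canonical map of Lemma \ref{torfindim} agrees with the honest inclusion under the colimit identification) can be bypassed entirely by noting that the terms $J_n=\Hom_{\mathcal{A}}(\mathcal{A}/\mathcal{A}^{\ge n},I)$ of your direct system are simultaneously semi-injective over $\mathcal{A}^{op}$ (the unlabeled proposition) and of finite-dimensional cohomology (proof of Lemma \ref{below}); hence for each $n$ the canonical map is literally the inclusion $\Gamma_{\frak{m}^{op}}(J_n)\subseteq J_n$ and is a quasi-isomorphism by Lemma \ref{torfindim}, and passing to the filtered colimit (exact over $k$) gives the quasi-isomorphism $\Gamma_{\frak{m}^{op}}(J)\hookrightarrow J$ directly, with no naturality bookkeeping needed.
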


\begin{proof}
Let $\mathcal{A}\stackrel{\simeq}{\to} I$ be a semi-injective resolution of
the DG $\mathcal{A}^e$-module $\mathcal{A}$. By Lemma \ref{semi-inj}, $I$ is
semi-injective both as a DG $\mathcal{A}$-module and as a DG $\mathcal{A}^{op}$-module.
Since $\mathcal{A}$ is a Noetherian homologically smooth Gorenstein
connected cochain DG algebra, we have $R\Gamma_{\frak{m}}(\mathcal{A})\cong
 L\otimes_{\mathcal{A}}\mathcal{A}'$ in $\mathscr{D}(\mathcal{A}^e)$ by Theorem \ref{smoothgoren},
for some invertible DG $\mathcal{A}$-bimodule $L$. For any $n\ge 1$, let $F_{\mathcal{A}\!^{op}/(\mathcal{A}\!^{op})^{\ge n}}$ be the minimal semi-free resolution of the DG right $\mathcal{A}$-module $\mathcal{A}\!^{op}/(\mathcal{A}\!^{op})^{\ge n}$. Then
\begin{align*}
R\Gamma_{\frak{m}_{\mathcal{A}^e}}(\mathcal{A})&\cong \Gamma_{\frak{m}_{\mathcal{A}^e}}(I) \\
&= \varinjlim_{n\ge 1}\Hom_{\mathcal{A}^e}((\mathcal{A}^e)/(\mathcal{A}^e)^{\ge n},I) \\
&= \varinjlim_{n\ge 1}\Hom_{\mathcal{A}^e}((\mathcal{A}^e)/(\mathcal{A}^{\ge
n}\otimes \mathcal{A}\!^{op} + \mathcal{A}\otimes (\mathcal{A}\!^{op})^{\ge n}),I) \\
&=\varinjlim_{n\ge 1}\Hom_{\mathcal{A}^e}((\mathcal{A}/\mathcal{A}^{\ge n})\otimes
(\mathcal{A}\!^{op}/(\mathcal{A}\!^{op})^{\ge n}),I)\\
&=\varinjlim_{n\ge 1}\Hom_{\mathcal{A}\!^{op}}(\mathcal{A}\!^{op}/(\mathcal{A}\!^{op})^{\ge n},
\Hom_{\mathcal{A}}(\mathcal{A}/\mathcal{A}^{\ge
n},I)) \\
&=\varinjlim_{n\ge 1}\varinjlim_{m\ge
1}\Hom_{\mathcal{A}\!^{op}}(\mathcal{A}\!^{op}/(\mathcal{A}\!^{op})^{\ge n},
\Hom_{\mathcal{A}}(\mathcal{A}/\mathcal{A}^{\ge m},I))\\
&\cong \varinjlim_{n\ge 1}\varinjlim_{m\ge
1}\Hom_{\mathcal{A}\!^{op}}(F_{\mathcal{A}\!^{op}/(\mathcal{A}\!^{op})^{\ge n}},
\Hom_{\mathcal{A}}(\mathcal{A}/\mathcal{A}^{\ge m},I))\\
&\cong \varinjlim_{n\ge
1}\Hom_{\mathcal{A}\!^{op}}(F_{\mathcal{A}\!^{op}/(\mathcal{A}\!^{op})^{\ge n}},\varinjlim_{m\ge
1}
\Hom_{\mathcal{A}}(\mathcal{A}/\mathcal{A}^{\ge m},I))\\
&\cong \varinjlim_{n\ge
1}\Hom_{\mathcal{A}\!^{op}}(F_{\mathcal{A}\!^{op}/(\mathcal{A}\!^{op})^{\ge n}},L\otimes_{\mathcal{A}}\mathcal{A}')\\
&\cong \varinjlim_{n\ge
1}\Hom_{\mathcal{A}\!^{op}}(\mathcal{A}\!^{op}/(\mathcal{A}\!^{op})^{\ge n}, L\otimes_{\mathcal{A}}\mathcal{A}')\\
&= L\otimes_{\mathcal{A}}\mathcal{A}' \cong R\Gamma_{\frak{m}}(\mathcal{A})
\\
\end{align*}
in $\mathscr{D}(\mathcal{A}^e)$. Similarly, we can prove that
$R\Gamma_{\frak{m}^e}(\mathcal{A})\cong R\Gamma_{\frak{m}^{op}}(\mathcal{A}\!^{op})$ in
$\mathscr{D}(\mathcal{A}^e)$.

\end{proof}

\begin{thm}\label{biduality}
Let $\mathcal{A}$ be a Noetherian, homologically smooth and Gorenstein
connected cochain DG algebra. Then for any $M\in \mathscr{D}^c(\mathcal{A})$,
$R\Gamma_{\frak{m}}(M)'$ is a compact DG $\mathcal{A}\!^{op}$-module. And
there is a quasi-inverse contravariant equivalence of categories,
\begin{align*}
\xymatrix{&\mathscr{D}^{c}(\mathcal{A})\quad\quad\ar@<1ex>[r]^{R\Gamma_{\frak{m}}(-)'}&\quad\quad
\mathscr{D}^{c}(\mathcal{A}\!^{op})\ar@<1ex>[l]^{R\Gamma_{\frak{m}\!^{op}}(-)'}}.
\end{align*}
\end{thm}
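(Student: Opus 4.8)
The plan is to rewrite both functors as Hom-duality against a single invertible DG bimodule and then collapse the two-fold duality into the biduality of perfect complexes. First I would put $W:=R\Gamma_{\frak{m}}(\mathcal{A})'$. Since $\mathcal{A}$ is Noetherian, homologically smooth and Gorenstein, Theorem \ref{smoothgoren} gives $R\Gamma_{\frak{m}}(\mathcal{A})\cong L\otimes_{\mathcal{A}}\mathcal{A}'$ in $\mathscr{D}(\mathcal{A}^e)$ for an invertible bimodule $L=\Sigma^i\mathcal{A}(\phi)$. Dualizing and repeating the adjunction computation from the proof of Theorem \ref{smoothgoren} (using $\mathcal{A}''\cong\mathcal{A}$, valid by local finiteness), I get $W\cong\Hom_{\mathcal{A}\!^{op}}(L,\mathcal{A})\cong L^{\vee}=\Sigma^{-i}\mathcal{A}(\phi^{-1})$, so $W$ is itself an invertible DG $\mathcal{A}$-bimodule. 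Proposition \ref{lrcong} moreover yields $R\Gamma_{\frak{m}}(\mathcal{A})\cong R\Gamma_{\frak{m}\!^{op}}(\mathcal{A}\!^{op})$ in $\mathscr{D}(\mathcal{A}^e)$, hence $R\Gamma_{\frak{m}\!^{op}}(\mathcal{A})'\cong W$ as well. Therefore Theorem \ref{localdual} and Remark \ref{op} read $R\Gamma_{\frak{m}}(M)'\cong R\Hom_{\mathcal{A}}(M,W)$ and $R\Gamma_{\frak{m}\!^{op}}(N)'\cong R\Hom_{\mathcal{A}\!^{op}}(N,W)$, with the \emph{same} $W$ on both sides; by Remark \ref{opcase}, $\mathcal{A}\!^{op}$ inherits all the standing hypotheses, so every statement has its $\mathcal{A}\!^{op}$-analogue.

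For the first assertion I would argue that $R\Hom_{\mathcal{A}}(-,W)$ maps $\mathscr{D}^c(\mathcal{A})$ into $\mathscr{D}^c(\mathcal{A}\!^{op})$. As $\mathscr{D}^c(\mathcal{A})$ is the thick subcategory of $\mathscr{D}(\mathcal{A})$ generated by ${}_{\mathcal{A}}\mathcal{A}$ and $R\Hom_{\mathcal{A}}(\mathcal{A},W)\cong W$ is compact over $\mathcal{A}\!^{op}$ (being $\Sigma^{-i}\mathcal{A}(\phi^{-1})$, free of rank one up to shift and twist), the contravariant triangulated functor $R\Hom_{\mathcal{A}}(-,W)$—which commutes with suspension, sends distinguished triangles to distinguished triangles and preserves finite sums and retracts—carries this thick subcategory into the thick subcategory of $\mathscr{D}(\mathcal{A}\!^{op})$ generated by $W$, which lies in $\mathscr{D}^c(\mathcal{A}\!^{op})$. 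Hence $R\Gamma_{\frak{m}}(M)'$ is compact for every $M\in\mathscr{D}^c(\mathcal{A})$.

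For the equivalence I would compute the two composites. Fix $M\in\mathscr{D}^c(\mathcal{A})$ and write $M^{\vee}:=R\Hom_{\mathcal{A}}(M,\mathcal{A})\in\mathscr{D}^c(\mathcal{A}\!^{op})$. Compactness of $M$ gives the natural tensor-evaluation isomorphism $R\Hom_{\mathcal{A}}(M,W)\cong M^{\vee}\otimes_{\mathcal{A}}W$. Since $-\otimes_{\mathcal{A}}W$ is an autoequivalence of $\mathscr{D}(\mathcal{A}\!^{op})$ with quasi-inverse $-\otimes_{\mathcal{A}}W^{\vee}$, adjunction together with $W\otimes_{\mathcal{A}}W^{\vee}\cong\mathcal{A}$ yields
\begin{align*}
R\Gamma_{\frak{m}\!^{op}}(R\Gamma_{\frak{m}}(M)')' &\cong R\Hom_{\mathcal{A}\!^{op}}(M^{\vee}\otimes_{\mathcal{A}}W,W) \\
&\cong R\Hom_{\mathcal{A}\!^{op}}(M^{\vee},W\otimes_{\mathcal{A}}W^{\vee}) \\
&\cong R\Hom_{\mathcal{A}\!^{op}}(M^{\vee},\mathcal{A}),
\end{align*}
and the last term equals $R\Hom_{\mathcal{A}\!^{op}}(R\Hom_{\mathcal{A}}(M,\mathcal{A}),\mathcal{A})\cong M$ by the biduality of perfect complexes. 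All these isomorphisms are natural in $M$, so $R\Gamma_{\frak{m}\!^{op}}(-)'\circ R\Gamma_{\frak{m}}(-)'\cong\mathrm{id}_{\mathscr{D}^c(\mathcal{A})}$; the symmetric computation over $\mathcal{A}\!^{op}$ gives $R\Gamma_{\frak{m}}(-)'\circ R\Gamma_{\frak{m}\!^{op}}(-)'\cong\mathrm{id}_{\mathscr{D}^c(\mathcal{A}\!^{op})}$. This exhibits the two functors as quasi-inverse contravariant equivalences.

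The main obstacle I anticipate is the bookkeeping of left/right structures: the Matlis dual interchanges sides, so one must track precisely which structure of the bimodule $W$ feeds each $\Hom$ and each tensor, and check that the invertibility relation $W\otimes_{\mathcal{A}}W^{\vee}\cong\mathcal{A}$ is invoked on the correct side. Conceptually the heart of the argument—folding the two local dualities into ordinary biduality—is immediate once $W$ is known to be invertible and common to both functors; the points that require genuine care are the tensor-evaluation isomorphism $R\Hom_{\mathcal{A}}(M,W)\cong R\Hom_{\mathcal{A}}(M,\mathcal{A})\otimes_{\mathcal{A}}W$ and the biduality $R\Hom_{\mathcal{A}\!^{op}}(R\Hom_{\mathcal{A}}(M,\mathcal{A}),\mathcal{A})\cong M$, both of which hold precisely because $M$ is a finite semifree, hence perfect, DG module.
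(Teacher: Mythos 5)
Your proposal is correct, and its skeleton coincides with the paper's: both arguments run on Theorem \ref{localdual} and Remark \ref{op}, Theorem \ref{smoothgoren}, Remark \ref{opcase} and Proposition \ref{lrcong}, and both ultimately collapse the composite functor into the biduality $R\Hom_{\mathcal{A}\!^{op}}(R\Hom_{\mathcal{A}}(M,\mathcal{A}),\mathcal{A})\cong M$ for perfect DG modules. The differences lie in execution, at two spots, and they are worth recording. For the compactness claim the paper fixes a minimal semi-free resolution $F_M$ with finite semi-basis and reads off directly that $R\Gamma_{\frak{m}}(M)'\cong\Hom_{\mathcal{A}}(F_M,\mathcal{A})\otimes_{\mathcal{A}}\Hom_{\mathcal{A}\!^{op}}(L,\mathcal{A})$ is compact; you instead use a thick-subcategory argument ($\mathscr{D}^c(\mathcal{A})$ is the thick subcategory generated by ${}_{\mathcal{A}}\mathcal{A}$, and $R\Hom_{\mathcal{A}}(-,W)$ sends the generator to the compact object $W$), which is equally valid and somewhat more robust, since it only needs the dualizing object to be compact on the other side rather than its explicit form. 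For the composite, the paper writes out an explicit chain of Hom-tensor isomorphisms over $F_M$, in which the invertibility of $L$ enters as $\Hom_{\mathcal{A}\!^{op}}\bigl(\Hom_{\mathcal{A}\!^{op}}(L,\mathcal{A}),\Hom_{\mathcal{A}\!^{op}}(L,\mathcal{A})\bigr)\cong\mathcal{A}$ (Lemma \ref{hom}); you first identify $W=R\Gamma_{\frak{m}}(\mathcal{A})'\cong\Hom_{\mathcal{A}\!^{op}}(L,\mathcal{A})\cong\Sigma^{-i}\mathcal{A}(\phi^{-1})$ as an invertible bimodule and then invoke $W\otimes_{\mathcal{A}}W^{\vee}\cong\mathcal{A}$ — the same computation repackaged through the invertible-bimodule calculus, with the added benefit of making the role of Proposition \ref{lrcong} (one and the same dualizing object serving both sides) transparent. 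The only step you should spell out is the passage $R\Hom_{\mathcal{A}\!^{op}}(M^{\vee}\otimes_{\mathcal{A}}W,W)\cong R\Hom_{\mathcal{A}\!^{op}}\bigl(M^{\vee},\Hom_{\mathcal{A}\!^{op}}(W,W)\bigr)$ followed by $\Hom_{\mathcal{A}\!^{op}}(W,W)\cong W\otimes_{\mathcal{A}}W^{\vee}$: the latter identification requires $W$ to be finitely generated free (up to shift and twist) as a one-sided DG module, which holds precisely because $W\cong\Sigma^{-i}\mathcal{A}(\phi^{-1})$, the same input the paper draws from Lemma \ref{invdgbim}.
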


\begin{proof}
Let $F_M$ be a minimal semi-free resolution of $M$. Since $M$ is
compact, $F_M$ has a finite semi-basis. By Theorem \ref{localdual}
and Theorem \ref{smoothgoren}, we have
\begin{align*}
R\Gamma_{\frak{m}}(M)' &\cong
R\Hom_{\mathcal{A}}(M,R\Gamma_{\frak{m}}(\mathcal{A})') \\
&\cong \Hom_{\mathcal{A}}(F_M,(L\otimes_{\mathcal{A}}\mathcal{A}')')\\
&\cong
\Hom_{\mathcal{A}}(F_M,\Hom_{\mathcal{A}\!^{op}}(L,\mathcal{A}''))\\
&\cong \Hom_{\mathcal{A}}(F_M,\Hom_{\mathcal{A}\!^{op}}(L,\mathcal{A}))\\
&\cong \Hom_{\mathcal{A}}(F_M,\mathcal{A})\otimes_{\mathcal{A}}\Hom_{\mathcal{A}\!^{op}}(L,\mathcal{A}) ,
\end{align*} where $L$
is an invertible DG bimodule over $\mathcal{A}$. By Lemma \ref{dginvert}, the DG $\mathcal{A}^e$-module
$L\cong \Sigma^i\mathcal{A}(\phi)$, for some $i\in \Bbb{Z}$ and $\phi\in \mathrm{Aut}_{dg}\mathcal{A}$.
This implies that $$\Hom_{\mathcal{A}}(F_M,\mathcal{A})\otimes_{\mathcal{A}}\Hom_{\mathcal{A}\!^{op}}(L,\mathcal{A})\in \mathscr{D}^c(\mathcal{A}\!^{op})$$
since $F_M$ admits a finite semi-basis. So
$R\Gamma_{\frak{m}}(M)'$ is a compact DG $\mathcal{A}\!^{op}$-module. To prove
the quasi-inverse contravariant equivalence of categories, it
suffices to show that
$R\Gamma_{\frak{m}\!^{op}}(R\Gamma_{\frak{m}}(M)')'\cong M$ in
$\mathscr{D}(\mathcal{A})$. By Remark \ref{op},
$$R\Gamma_{\frak{m}\!^{op}}(R\Gamma_{\frak{m}}(M)')'\cong
R\Hom_{\mathcal{A}\!^{op}}(R\Gamma_{\frak{m}}(M)',R\Gamma_{\frak{m}\!^{op}}(\mathcal{A}\!^{op})')$$
in $\mathscr{D}(\mathcal{A})$. By Remark \ref{opcase}, we have
$R\Gamma_{\frak{m}\!^{op}}(\mathcal{A}\!^{op})\cong \mathcal{A}'\otimes_{\mathcal{A}} Q $ in
$\mathscr{D}(\mathcal{A}^e)$, for some invertible DG $\mathcal{A}$-bimodule $Q$.
Proposition \ref{lrcong} indicates that $\mathcal{A}'\otimes_{\mathcal{A}} Q\cong L\otimes_{\mathcal{A}}\mathcal{A}'$  in
$\mathscr{D}(\mathcal{A}^e)$. Therefore,
\begin{align*}
R\Gamma_{\frak{m}\!^{op}}(R\Gamma_{\frak{m}}(M)')'&\cong
\Hom_{\mathcal{A}\!^{op}}( \Hom_{\mathcal{A}}(F_M,\mathcal{A})\otimes_{\mathcal{A}}\Hom_{\mathcal{A}\!^{op}}(L,\mathcal{A}) , (\mathcal{A}'\otimes_{\mathcal{A}} Q)')\\
&\cong \Hom_{\mathcal{A}\!^{op}}( \Hom_{\mathcal{A}}(F_M,\mathcal{A})\otimes_{\mathcal{A}}\Hom_{\mathcal{A}\!^{op}}(L,\mathcal{A}), (L\otimes_{\mathcal{A}} \mathcal{A}')')\\
&\cong \Hom_{\mathcal{A}\!^{op}}( \Hom_{\mathcal{A}}(F_M,\mathcal{A})\otimes_{\mathcal{A}}\Hom_{\mathcal{A}\!^{op}}(L,\mathcal{A}), \Hom_{\mathcal{A}\!^{op}}(L,\mathcal{A}''))\\
&\cong
\Hom_{\mathcal{A}\!^{op}}(\Hom_{\mathcal{A}}(F_M,\mathcal{A}),\Hom_{\mathcal{A}\!^{op}}(\Hom_{\mathcal{A}\!^{op}}(L,\mathcal{A}),\Hom_{\mathcal{A}\!^{op}}(L,\mathcal{A}))\\
&\cong  \Hom_{\mathcal{A}\!^{op}}(\Hom_{\mathcal{A}}(F_M,\mathcal{A}),\mathcal{A})\\
&\cong F_M.
\end{align*}
So
$R\Gamma_{\frak{m}^{op}}(R\Gamma_{\frak{m}}(M)')'\cong M$ in
$\mathscr{D}(\mathcal{A})$.
\end{proof}

\section {homological determinant}
In this section, we will introduce and study the notion of homological determinant for group actions on Gorenstein homologically smooth DG algebras. For these, we need the following lemma first.

\begin{lem}\label{implem}
Let $\mathcal{A}$ be a Gorenstein  homologically smooth locally finite connected cochain DG algebra. For any $\sigma\in \mathrm{Aut}_{dg}(\mathcal{A})$, consider the morphism of DG $\mathcal{A}\!^{op}$-modules $$\Sigma^{-i}\sigma^{-1}:\Sigma^{-i}\mathcal{A}_{\mathcal{A}}\to \Sigma^{-i}\mathcal{A}_{\mathcal{A}},$$ which is $\sigma^{-1}$-linear. The Matlis dual map $(\Sigma^{-i} \sigma^{-1})':\Sigma^{i}{}_{\mathcal{A}}\mathcal{A}'\to \Sigma^{i}{}_{\mathcal{A}}\mathcal{A}'$ is $\sigma$-linear. Hence there exists a scalar $c\in k^{\times}$ such that the $H(\sigma)$-linear map
$$H_{\frak{m}}(\sigma): H_{\frak{m}}(\mathcal{A})\to H_{\frak{m}}(\mathcal{A})$$ is equal to
$$cH[(\Sigma^{-i} \sigma^{-1})']:\Sigma^i{}_{H(\mathcal{A})}[H(\mathcal{A})]'\to \Sigma^i{}_{H(\mathcal{A})}[H(\mathcal{A})]'.$$
\end{lem}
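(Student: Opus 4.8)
The plan is to interpret both maps as $H(\sigma)$-semilinear automorphisms of one and the same graded module, and then to show that any two such automorphisms differ by a scalar. Since $\mathcal{A}$ is Gorenstein, homologically smooth and locally finite, Proposition \ref{smoothGor} gives $R\Gamma_{\frak{m}}(\mathcal{A})\cong \Sigma^i({}_{\mathcal{A}}\mathcal{A})'$ in $\mathscr{D}(\mathcal{A}\!^{op})$ for the integer $i$ appearing in the statement. Passing to cohomology and using that $\Hom_k(-,k)$ is exact over the field $k$, together with local finiteness of $\mathcal{A}$ (hence of $H(\mathcal{A})$), I obtain an isomorphism $H_{\frak{m}}(\mathcal{A})\cong \Sigma^i[H(\mathcal{A})]'$ of graded modules. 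This is the common object on which both $H_{\frak{m}}(\sigma)$ and $H[(\Sigma^{-i}\sigma^{-1})']$ act.

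First I would record the semilinearity of the two maps. Functoriality of $\Gamma_{\frak{m}}$ applied to the $\sigma$-linear chain map $\sigma:\mathcal{A}\to{}^{\sigma}\mathcal{A}$ produces $H_{\frak{m}}(\sigma)$, which is $H(\sigma)$-linear. For the other map, a direct check using $(a\cdot f)(m)=\pm f(ma)$ shows that if $g$ is $\sigma^{-1}$-linear then $g'$ satisfies $g'(a\cdot f)=\sigma(a)\cdot g'(f)$; applied to $g=\Sigma^{-i}\sigma^{-1}$ this gives that $(\Sigma^{-i}\sigma^{-1})'$, and hence $H[(\Sigma^{-i}\sigma^{-1})']$, is $H(\sigma)$-linear. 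Both maps are isomorphisms, since $\sigma$ and $\Sigma^{-i}\sigma^{-1}$ are.

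The key reduction is to compose: write $\Phi=H_{\frak{m}}(\sigma)$ and $\Psi=H[(\Sigma^{-i}\sigma^{-1})']$. Since $\Phi$ and $\Psi$ are both $H(\sigma)$-semilinear, the composite $\Psi^{-1}\Phi$ is an honest degree-zero graded $H(\mathcal{A})$-module endomorphism of $\Sigma^i[H(\mathcal{A})]'$. It therefore suffices to prove that every such endomorphism is multiplication by a scalar, i.e. $\mathrm{End}^{0}_{H(\mathcal{A})}([H(\mathcal{A})]')=k$ in degree zero (the suspension $\Sigma^i$ being irrelevant here). This is the step I expect to be the main obstacle, and I would settle it by Matlis duality: the contravariant exact functor $(-)'$ sends $[H(\mathcal{A})]'$ to its double dual, which by local finiteness of $H(\mathcal{A})$ is $H(\mathcal{A})$ itself, and induces an anti-isomorphism of graded endomorphism rings. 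Hence the degree-zero endomorphisms of $[H(\mathcal{A})]'$ are identified with those of the rank-one free module $H(\mathcal{A})$ over itself, which are exactly the multiplications by $H(\mathcal{A})^0=k$ since $H(\mathcal{A})$ is connected.

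Finally, $\Psi^{-1}\Phi=c\cdot\mathrm{id}$ for some $c\in k$, and $c\in k^{\times}$ because $\Phi,\Psi$ are isomorphisms; rearranging gives $H_{\frak{m}}(\sigma)=\Phi=c\Psi=cH[(\Sigma^{-i}\sigma^{-1})']$, as required. The care points will be tracking the Koszul signs in the semilinearity computation and confirming that the background identification $H_{\frak{m}}(\mathcal{A})=\Sigma^i[H(\mathcal{A})]'$ is the one implicit in the statement, so that the two maps genuinely share a domain.
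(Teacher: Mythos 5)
Your proposal is correct and takes essentially the same route as the paper's proof: identify $H_{\frak{m}}(\mathcal{A})$ with $\Sigma^i{}_{H(\mathcal{A})}[H(\mathcal{A})]'$ via the Gorenstein characterization (Proposition \ref{smoothGor}/Theorem \ref{smoothgoren}), observe that $H_{\frak{m}}(\sigma)$ is $H(\sigma)$-semilinear, invoke Matlis duality to see that the space of such maps is spanned by $H[(\Sigma^{-i}\sigma^{-1})']$, and get $c\neq 0$ from invertibility of $H_{\frak{m}}(\sigma)$. Your reduction of the composite $\Psi^{-1}\Phi$ to a genuine $H(\mathcal{A})$-linear endomorphism, together with the verification that $\mathrm{End}^{0}_{H(\mathcal{A})}([H(\mathcal{A})]')=k$ via biduality and connectedness, is simply a careful spelling-out of the step the paper dispatches with the phrase ``by Matlis duality,'' so it is a welcome elaboration rather than a different argument.
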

\begin{proof}
By Theorem \ref{smoothgoren}, $H_{\frak{m}}(\mathcal{A})$ is equal to $\Sigma^i{}_{H(\mathcal{A})}[H(\mathcal{A})]'$, for some $i\in \Bbb{Z}$.
Acting $H_{\frak{m}}(-)$ on the $\mathcal{A}$-linear map $\sigma: {}_{\mathcal{A}}\mathcal{A}\to {}_{\mathcal{A}}^{\sigma}\mathcal{A}$, we get the $H(\mathcal{A})$-linear map
$$H_{\frak{m}}(\sigma):H_{\frak{m}}(\mathcal{A})\to H_{\frak{m}}({}^{\sigma}\mathcal{A})={}^{H(\sigma)}H_{\frak{m}}(\mathcal{A}).$$
By Theorem \ref{smoothgoren}, we may write this as
$$H_{\frak{m}}(\sigma):  \Sigma^i{}_{H(\mathcal{A})}[H(\mathcal{A})]'\to \Sigma^i{}_{H(\mathcal{A})}^{H(\sigma)}[H(\mathcal{A})]'.$$
By Matlis duality, one sees that the only $H(\mathcal{A})$-linear maps from $\Sigma^i{}_{H(\mathcal{A})}[H(\mathcal{A})]'$ to $\Sigma^i{}_{H(\mathcal{A})}^{H(\sigma)}[H(\mathcal{A})]'$ are of the form $cH[(\Sigma^{-i} \sigma^{-1})']$, where $c\in k$.

Finally, we want to see $c\neq 0$. This is easy, since $H_{\frak{m}}(\sigma )H_{\frak{m}}(\sigma^{-1})=\mathrm{id}_{H_{\frak{m}}(\mathcal{A})}.$
\end{proof}

Lemma \ref{implem} enables us to give the following definition.
\begin{defn}\label{DGhdet}{\rm
 Assume that $\mathcal{A}$ is a Gorenstein, homologically smooth, locally finite connected cochain DG algebra. For each $\sigma\in \mathrm{Aut}_{dg}(\mathcal{A})$, we have $H_{\frak{m}}(\sigma)=cH[(\Sigma^{-i} \sigma^{-1})']$ by Lemma \ref{implem}, for some $c\in k^{\times}$ and $i\in \Bbb{Z}$. We write $\mathrm{Hdet}_{\mathcal{A}}\sigma =c^{-1}$, and call it the homological determinant of $\sigma$. It is easy for one to see that $\mathrm{Hdet}_{\mathcal{A}}$ defines a group homomorphism $\mathrm{Aut}_{dg}(\mathcal{A})\to k^{\times}$. }
\end{defn}

\begin{prop}\label{same}
Let $\mathcal{A}$ be a homologically smooth locally finite connected cochain DG algebra such that $H(\mathcal{A})$ is a Noetherian AS-Gorenstein algebra.
Then for any $\sigma\in \mathrm{Aut}_{dg} \mathcal{A}$, we have $\mathrm{Hdet}_{\mathcal{A}}\sigma=\mathrm{hdet}_{H(\mathcal{A})} H(\sigma )$.
\end{prop}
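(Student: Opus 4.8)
The plan is to unwind both quantities as the scalar by which an automorphism acts on a copy of the Matlis dual $\Sigma^{\bullet}{}_{H(\mathcal{A})}[H(\mathcal{A})]'$ that arises as a local cohomology module, and then to match these two descriptions. Write $B:=H(\mathcal{A})$. On the DG side, Lemma \ref{implem} and Definition \ref{DGhdet} already supply $H_{\frak{m}}(\mathcal{A})\cong \Sigma^i{}_{B}B'$ together with the identity $H_{\frak{m}}(\sigma)=(\mathrm{Hdet}_{\mathcal{A}}\sigma)^{-1}\,H[(\Sigma^{-i}\sigma^{-1})']$; thus $(\mathrm{Hdet}_{\mathcal{A}}\sigma)^{-1}$ is exactly the scalar comparing $H_{\frak{m}}(\sigma)$ with the natural Matlis-dual action of $H(\sigma)$ on $\Sigma^i B'$. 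On the graded side, since $B$ is Noetherian AS-Gorenstein of some injective dimension $d$, graded local duality (as in \cite{Jor1,JoZ}) gives $H^{d}_{\frak{m}_B}(B)\cong \Sigma^{j}{}_{B}B'$ as graded left $B$-modules, and the definition of $\mathrm{hdet}_{B}$ in \cite{JoZ} is precisely that the induced map $H^{d}_{\frak{m}_B}(H(\sigma))$ equals $(\mathrm{hdet}_{B}H(\sigma))^{-1}$ times the same natural Matlis-dual action of $H(\sigma)$. Hence it suffices to produce an isomorphism $H_{\frak{m}}(\mathcal{A})\cong H^{d}_{\frak{m}_B}(B)$ of graded $B$-modules that intertwines $H_{\frak{m}}(\sigma)$ with $H^{d}_{\frak{m}_B}(H(\sigma))$ for every $\sigma$.

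First I would record the DG computation $H_{\frak{m}}(\mathcal{A})=H(R\Gamma_{\frak{m}}(\mathcal{A}))=\varinjlim_{n}\mathrm{Ext}^{*}_{\mathcal{A}}(\mathcal{A}/\mathcal{A}^{\ge n},\mathcal{A})$, using $R\Gamma_{\frak{m}}(-)=\varinjlim_n R\Hom_{\mathcal{A}}(\mathcal{A}/\mathcal{A}^{\ge n},-)$ together with Lemma \ref{comlim}, and compare it to the graded local cohomology $H^{*}_{\frak{m}_B}(B)=\varinjlim_{n}\underline{\mathrm{Ext}}^{*}_{B}(B/B^{\ge n},B)$. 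The engine would be, for each fixed $n$, a convergent spectral sequence obtained by filtering a minimal semifree resolution of $\mathcal{A}/\mathcal{A}^{\ge n}$ by cohomological degree, whose $E_2$-term is built from graded $\underline{\mathrm{Ext}}$ over $B$ and which abuts to $\mathrm{Ext}^{*}_{\mathcal{A}}(\mathcal{A}/\mathcal{A}^{\ge n},\mathcal{A})$. Passing to the direct limit over $n$, the graded groups assemble into $H^{*}_{\frak{m}_B}(B)$, which by AS-Gorensteinness is concentrated in the single homological degree $d$; this concentration forces the limiting spectral sequence to collapse and yields an isomorphism $H_{\frak{m}}(\mathcal{A})\cong H^{d}_{\frak{m}_B}(B)$ of graded $B$-modules. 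Because every ingredient ($\varinjlim$, the resolutions, the filtration) is functorial in the module, replacing $\mathcal{A}$ throughout by ${}^{\sigma}\mathcal{A}$ shows that the isomorphism carries $H_{\frak{m}}(\sigma)$ to $H^{d}_{\frak{m}_B}(H(\sigma))$; equivalently, one checks this naturality on the one-dimensional top component, dual to $B^{0}=k$, on which the comparison scalar is concentrated. Along the way the two suspensions must be matched, $i=j$, which is bookkeeping dictated by the common Gorenstein parameter, since both $\Sigma^iB'$ and $\Sigma^jB'$ have their top nonzero component $(B^0)'=k$ in the degree forced by $\dim_kH(R\Hom_{\mathcal{A}}(k,\mathcal{A}))=1$ and by the AS-index of $B$, respectively.

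Granting this equivariant identification, the two natural Matlis-dual actions of $H(\sigma)$ coincide, so the scalars relating them to the common map $H_{\frak{m}}(\sigma)=H^{d}_{\frak{m}_B}(H(\sigma))$ must agree as well; that is, $(\mathrm{Hdet}_{\mathcal{A}}\sigma)^{-1}=(\mathrm{hdet}_{B}H(\sigma))^{-1}$, and therefore $\mathrm{Hdet}_{\mathcal{A}}\sigma=\mathrm{hdet}_{H(\mathcal{A})}H(\sigma)$, as asserted.

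The main obstacle is precisely the equivariant identification of the DG and graded local cohomologies, i.e. the passage from DG $\mathrm{Ext}$ over $\mathcal{A}$ to graded $\mathrm{Ext}$ over $H(\mathcal{A})$: one must control the spectral sequence and its direct limit, and—most delicately—verify that all the isomorphisms involved are natural in $\sigma$, so that the single scalar detecting the determinant transforms correctly. I expect the difficulty to lie in this \emph{naturality} rather than in the mere existence of some abstract isomorphism $H_{\frak{m}}(\mathcal{A})\cong H^{d}_{\frak{m}_B}(B)$, since the homological determinant is by its very nature the measure of the failure of a non-canonical such identification to be $H(\sigma)$-equivariant.
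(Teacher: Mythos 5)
Your overall framework coincides with the paper's: realize $(\mathrm{Hdet}_{\mathcal{A}}\sigma)^{-1}$ and $(\mathrm{hdet}_{H(\mathcal{A})}H(\sigma))^{-1}$ as the scalars comparing the respective local cohomology actions of $\sigma$ and $H(\sigma)$ with a reference Matlis-dual map, then match the two. But your plan has two genuine gaps. First, you invoke Lemma \ref{implem} and Definition \ref{DGhdet} from the outset, yet both presuppose that $\mathcal{A}$ itself is Gorenstein, which is not among the hypotheses; the paper's opening move is to compute $\mathrm{Ext}^i_{H(\mathcal{A})}(k,H(\mathcal{A}))$ from \cite[Lemma 2.1]{JoZ} and conclude Gorensteinness of $\mathcal{A}$ from \cite[Proposition 1]{Gam} — without this step $\mathrm{Hdet}_{\mathcal{A}}\sigma$ is not even defined. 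Second, and more seriously, your proposed engine for the equivariant identification fails as stated: for the Eilenberg--Moore type spectral sequence computing $\mathrm{Ext}^*_{\mathcal{A}}(\mathcal{A}/\mathcal{A}^{\ge n},\mathcal{A})$, the $E_2$-term involves $\underline{\mathrm{Ext}}_{H(\mathcal{A})}\bigl(H(\mathcal{A}/\mathcal{A}^{\ge n}),H(\mathcal{A})\bigr)$, and $H(\mathcal{A}/\mathcal{A}^{\ge n})$ is \emph{not} $H(\mathcal{A})/H(\mathcal{A})^{\ge n}$: in top degree the truncated complex contributes $\mathcal{A}^{n-1}/\mathrm{Im}\,\partial^{n-2}$ rather than $H^{n-1}(\mathcal{A})$. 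So the direct limit of these pages does not visibly assemble into $H^*_{\frak{m}_{H(\mathcal{A})}}(H(\mathcal{A}))$; moreover, at each finite stage nothing is concentrated in a single homological degree, so the collapse you appeal to could occur only after the limit and needs justification; and the $\sigma$-naturality of all identifications — which you yourself single out as the main obstacle — is never established. A proposal whose declared main difficulty is exactly the step left unproved is a plan, not a proof.

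For contrast, the paper avoids building any identification of the full local cohomologies. It uses Proposition \ref{smoothGor} (Gorensteinness gives $R\Gamma_{\frak{m}}(\mathcal{A})\cong\Sigma^j({}_{\mathcal{A}}\mathcal{A})'$) together with the local duality theorem (Theorem \ref{localdual}) to get $H(R\Hom_{\mathcal{A}}(k,\mathcal{A}))\cong\Sigma^j k$, and then runs the spectral sequence of \cite[Theorem 4.7]{KM} only for the trivial module $k$, where the $E_2$-page $\mathrm{Ext}^{p,q}_{H(\mathcal{A})}(k,H(\mathcal{A}))=k(l)$, concentrated in $p=d$, makes collapse and bookkeeping immediate and pins the suspension $j=d-l$; it then compares the defining identity $H_{\frak{m}}(\sigma)=c^{-1}\Sigma^{d-l}[(H(\sigma))^{-1}]'$ with the graded identity $H^d_{\frak{m}_{H(\mathcal{A})}}(H(\sigma))=[\mathrm{hdet}_{H(\mathcal{A})}H(\sigma)]^{-1}\{[(H(\sigma))^{-1}]'(l)\}$ from \cite[2.2, 2.3]{JoZ}. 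Your instinct that the $\sigma$-equivariant comparison of the DG and graded theories is the crux is sound (the paper itself treats that comparison briskly), but the workable repair is the paper's reduction: pass through local duality to the one-dimensional module $k$, where functoriality of the spectral sequence in $\sigma$ is a statement about scalars acting on a single line, rather than attacking the whole tower $\{\mathcal{A}/\mathcal{A}^{\ge n}\}$.
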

\begin{proof}
Since $H(\mathcal{A})$ is a Noetherian AS-Gorenstein algebra, we have
\begin{align*}
H^i_{\frak{m}_{H(\mathcal{A})}}(H(\mathcal{A}))\cong \begin{cases}
0, \quad\quad\quad\text{for}\quad i\neq d,\\
{}_{H(\mathcal{A})}H(\mathcal{A})'(l), \quad\text{for}\quad i=d,
\end{cases}
\end{align*}
by \cite[Lemma 2.1]{JoZ}.
Here $d$ and $l$ are the injective dimension and the Gorenstein index of $H(\mathcal{A})$, respectively.
So
\begin{align*}
\mathrm{Ext}_{H(\mathcal{A})}^i(k,H(\mathcal{A}))\cong \begin{cases}
0, \quad\quad\text{for}\quad i\neq d,\\
k(l), \quad\text{for}\quad i=d,
\end{cases}
\end{align*}
By \cite[Proposition 1]{Gam}, $\mathcal{A}$ is a Gorenstein DG algebra. By Theorme \ref{smoothgoren},
$R\Gamma_{\frak{m}}(\mathcal{A})\cong
\Sigma^j({}_{\mathcal{A}}\mathcal{A})'$ in $\mathscr{D}(\mathcal{A}\!^{op})$ for some $j\in \Bbb{Z}$. Then by Theorem \ref{localdual},
\begin{align*}
H(R\Hom_{\mathcal{A}}(k,\mathcal{A}))&\cong H(R\Hom_{\mathcal{A}}(k, \Sigma^j
R\Gamma_{\frak{m}}(\mathcal{A})'))\\
&\cong \Sigma^jH_{\frak{m}}(k) =
\Sigma^jk.
\end{align*}
By \cite[Theorem 4.7]{KM}, there is a natural sepctral sequence
$$E_2^{p,q}=\mathrm{Ext}_{H(\mathcal{A})}^{p,q}(k,H(\mathcal{A})) \Rightarrow H^{p+q}(R\Hom_{\mathcal{A}}(k,\mathcal{A})). $$
Hence $j=d-l$. Let $c=\mathrm{Hdet}_{\mathcal{A}}\sigma$. Then $$H_{\frak{m}}(\sigma)=c^{-1}H[(\Sigma^{l-d} \sigma^{-1})']:\Sigma^{d-l}{}_{H(\mathcal{A})}[H(\mathcal{A})]'\to \Sigma^{d-l}{}_{H(\mathcal{A})}[H(\mathcal{A})]'$$ by Definition \ref{DGhdet}.
We have \begin{align}\label{dgcase}
H_{\frak{m}}(\sigma)=c^{-1}H[(\Sigma^{l-d} \sigma^{-1})']
=c^{-1}\Sigma^{d-l}[H(\sigma^{-1})]'
=c^{-1}\Sigma^{d-l}[(H(\sigma))^{-1}]'.
\end{align}
By \cite[2.2 and 2.3]{JoZ}, $H(\sigma):H(\mathcal{A})\to H(\mathcal{A})$ induces a $H(\sigma)$-linear map $$H_{\frak{m}_{H(\mathcal{A})}}^d(H(\sigma)):{}_{H(\mathcal{A})}H(\mathcal{A})'(l)\to {}_{H(\mathcal{A})}H(\mathcal{A})'(l).$$
By the definition of the homological determinant of $H(\sigma)$, we have
\begin{align}\label{grcase}H_{\frak{m}_{H(\mathcal{A})}}^d(H(\sigma))=[\mathrm{hdet}_{H(\mathcal{A})}H(\sigma)]^{-1}\{[(H(\sigma))^{-1}]'(l)\}.
\end{align}
By (\ref{dgcase}) and (\ref{grcase}), we have $\mathrm{Hdet}_{\mathcal{A}}\sigma=c=\mathrm{hdet}_{H(\mathcal{A})}H(\sigma)$.
\end{proof}
\begin{prop}\label{equal}
Let $\mathcal{A}$ be a connected cochain DG algebra. Suppose that $G$ is a finite subgroup of  $\mathrm{Aut}_{dg}\mathcal{A}$.
Then
\begin{enumerate}
\item
$\mathcal{A}^{G}=\{a\in \mathcal{A}|g(a)=a, \,\forall \, g\in G\}$ is a connected cochain DG subalgebra of $\mathcal{A}$;
\item $H(G):\{H(g)| g\in G\}$ is a finite subgroup of $\mathrm{Aut}_{gr}H(\mathcal{A})$;
\item The graded fixed subalgebra $H(\mathcal{A})^{H(G)}$ of $H(\mathcal{A})$ is isomorphic to $H(\mathcal{A}^{G})$.
\end {enumerate}
\end{prop}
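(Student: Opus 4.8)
The plan is to treat (1) and (2) by direct functorial checks and to reserve the substance for (3), where the characteristic-zero hypothesis on $k$ enters through an averaging (Reynolds) idempotent. For (1), I would use that every $g\in G$, being a DG algebra automorphism, is $k$-linear, preserves the grading, the multiplication and the unit, commutes with $\partial_{\mathcal{A}}$, and fixes $k=\mathcal{A}^0$ pointwise. Hence $\mathcal{A}^G=\bigcap_{g\in G}\ker(g-\mathrm{id})$ is a graded $k$-subspace containing $1$, closed under multiplication (if $g(a)=a$ and $g(b)=b$ then $g(ab)=g(a)g(b)=ab$) and under $\partial_{\mathcal{A}}$ (if $g(a)=a$ then $g(\partial_{\mathcal{A}}a)=\partial_{\mathcal{A}}g(a)=\partial_{\mathcal{A}}a$); since $(\mathcal{A}^G)^{<0}=0$ and $(\mathcal{A}^G)^0=k$, it is a connected cochain DG subalgebra. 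For (2), functoriality of $H(-)$ gives $H(g\circ h)=H(g)\circ H(h)$ and $H(\mathrm{id})=\mathrm{id}$, so $g\mapsto H(g)$ is a group homomorphism $G\to \mathrm{Aut}_{gr}H(\mathcal{A})$ and each $H(g)$ is invertible with inverse $H(g^{-1})$; its image $H(G)$ is therefore a subgroup, finite because it is a quotient of the finite group $G$.

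For (3), the key device is the Reynolds operator $e=\frac{1}{|G|}\sum_{g\in G}g$, well defined since $\mathrm{char}\,k=0$. I would first verify that $e$ is a $k$-linear idempotent chain map: it commutes with $\partial_{\mathcal{A}}$ because each $g$ does, and $e^2=e$ because left multiplication by a fixed $g$ permutes $G$. A short computation shows $\mathrm{Im}(e)=\mathcal{A}^G$ and $e|_{\mathcal{A}^G}=\mathrm{id}$, so $e$ is the projection realizing a decomposition $\mathcal{A}=\mathcal{A}^G\oplus\ker e$ of complexes of $k$-vector spaces. Applying the additive functor $H(-)$ then yields $H(\mathcal{A})=H(\mathcal{A}^G)\oplus H(\ker e)$; this makes $H(\mathcal{A}^G)\to H(\mathcal{A})$ injective and identifies $H(\mathcal{A}^G)$ with the image of the idempotent $H(e)=\frac{1}{|G|}\sum_{g\in G}H(g)$.

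It then remains to identify this image with $H(\mathcal{A})^{H(G)}$. Since $H(e)$ is exactly the averaging operator for the finite group $H(G)$ acting on $H(\mathcal{A})$, the same two computations as above, now using $H(h)H(e)=H(e)$ for every $h\in G$ and the fact that $H(e)$ fixes every $H(G)$-invariant class, give $\mathrm{Im}(H(e))=H(\mathcal{A})^{H(G)}$. Comparing the two descriptions of $\mathrm{Im}(H(e))$ yields $H(\mathcal{A}^G)=H(\mathcal{A})^{H(G)}$ as graded subalgebras of $H(\mathcal{A})$. The main obstacle is precisely this step: one must not merely average at the level of cohomology but first produce the genuine chain-level idempotent $e$, so that passing to cohomology commutes with forming invariants, and it is here that the invertibility of $|G|$ in $k$ is indispensable.
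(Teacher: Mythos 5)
Your proof is correct, and parts (1) and (2) match the paper's (the paper declares (1) straightforward and checks (2) by functoriality, as you do). For (3), both arguments rest on the same engine --- averaging over $G$, which is where $\mathrm{char}\,k=0$ enters --- but the packaging is genuinely different. The paper works element by element: it first notes the easy inclusion $H(\mathcal{A}^G)\subseteq H(\mathcal{A})^{H(G)}$, and then, given a class $\lceil z\rceil\in H(\mathcal{A})^{H(G)}$, writes $g(z)-z=\partial_{\mathcal{A}}(x_g)$ and verifies by hand that the averaged representative $y=\frac{1}{|G|}\sum_{g\in G}g(z)$ is a $G$-invariant cocycle with $\lceil y\rceil=\lceil z\rceil$. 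You instead promote the average to a chain-level Reynolds idempotent $e$, obtain the splitting $\mathcal{A}=\mathcal{A}^G\oplus\ker e$ of complexes, and identify $H(\mathcal{A}^G)$ with $\mathrm{Im}\,H(e)=H(\mathcal{A})^{H(G)}$. Your route takes slightly longer to set up but buys something the paper leaves implicit: the paper's two-inclusion phrasing treats $H(\mathcal{A}^G)$ as a subset of $H(\mathcal{A})$, tacitly using that the map $H(\mathcal{A}^G)\to H(\mathcal{A})$ induced by inclusion is injective (if $z\in\mathcal{A}^G$ is a cocycle with $z=\partial_{\mathcal{A}}(w)$ for some $w\in\mathcal{A}$, one must replace $w$ by an invariant element, which again needs averaging: $z=\partial_{\mathcal{A}}(e(w))$); your splitting makes this injectivity automatic. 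One small point to tighten in your write-up: $H(e)$ is literally the averaging operator of the group $H(G)$ only after observing that $g\mapsto H(g)$ is a group homomorphism whose fibers all have the same size, so that $\frac{1}{|G|}\sum_{g\in G}H(g)=\frac{1}{|H(G)|}\sum_{h\in H(G)}h$; alternatively, the two identities you already state ($H(\sigma)H(e)=H(e)$ for $\sigma\in G$, and $H(e)$ fixing every $H(G)$-invariant class) suffice to give $\mathrm{Im}\,H(e)=H(\mathcal{A})^{H(G)}$ without that identification.
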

\begin{proof}
(1) It is straightford to show that $\mathcal{A}^{G}$ is a subcomplex which is closed under the addition and multiplication of $\mathcal{A}$.

(2) For any $g_1,g_2\in G$, we have $H(g_1)H(g_2)=H(g_1\circ g_2)\in H(G)$. Hence $H(G)$ is a subgroup of $\mathrm{Aut}_{gr}H(\mathcal{A})$.

(3) Let $\iota: \mathcal{A}^G\to \mathcal{A}$ be the inclusion morphism of DG algebras. It induces a morphism of graded algebras $H(\iota): H(\mathcal{A}^G)\to H(\mathcal{A})$. For any $\lceil z\rceil \in \mathrm{ker}(H(\iota))$, we have $z\in \mathcal{A}^G, \partial_{\mathcal{A}^G}(z)=0$ and $H(\iota)(\lceil z\rceil)=\lceil z\rceil=0$ in $H(A)$. There exists $a\in A$ such that $\partial_A(a)=z$. Let $x=\frac{1}{|G|^{-1}}\sum\limits_{g\in G} g(a)$. For any $\sigma\in G$, we have $$
\sigma(x)=\frac{1}{|G|^{-1}}\sum\limits_{g\in G} \sigma\circ g(a)=\frac{1}{|G|^{-1}}\sum\limits_{g'\in G} g'(a)=x$$
and \begin{align*}
\partial_{\mathcal{A}^G}(x)&=\partial_{\mathcal{A}^G}[\frac{1}{|G|^{-1}}\sum\limits_{g\in G} g(a)]\\
&=\frac{1}{|G|^{-1}}\sum\limits_{g\in G} \partial_A\circ g(a)\\
&=\frac{1}{|G|^{-1}}\sum\limits_{g\in G} g\circ \partial_A(a)\\
&=\frac{1}{|G|^{-1}}\sum\limits_{g\in G} g(z)=\frac{1}{|G|^{-1}}\sum\limits_{g\in G}z=z.
\end{align*}
Then $\lceil z\rceil =0$ in $H(\mathcal{A}^G)$ and $H(\iota)$ is a monomorphism.  Hence $H(\mathcal{A}^G)\cong \mathrm{im}(H(\iota))$.
For any $\lceil z \rceil\in H(\mathcal{A}^{G})$, we have $\partial_{\mathcal{A}^G}(z)=\partial_{\mathcal{A}}(z)=0$ and $g(z)=z$, for any $g\in G$.  Hence
$H(g)(\lceil z \rceil)= \lceil g(z) \rceil=\lceil z\rceil$, for any $g\in G$. This implies that $\lceil z \rceil\in H(\mathcal{A})^{H(G)}$. So $\mathrm{im}(H(\iota))\subseteq H(\mathcal{A})^{H(G)}$.
Conversely, let $\lceil b \rceil\in H(\mathcal{A})^{H(G)}$. Then $\lceil b\rceil=H(g)(\lceil b \rceil)=\lceil g(b)\rceil $ for any $g\in G$. So there exists $x_g\in \mathcal{A}$ such that $g(b)-b=\partial_{\mathcal{A}}(x_g)$. Set $$y=\frac{1}{|G|^{-1}}\sum\limits_{g\in G} g(b).$$ For any $\sigma\in G$, we have $$\sigma(y)=\sigma[\frac{1}{|G|^{-1}}\sum\limits_{g\in G} g(b)]=\frac{1}{|G|^{-1}}\sum\limits_{g\in G}\sigma\circ g(b)=\frac{1}{|G|^{-1}}\sum\limits_{g'\in G} g'(b)=y.$$
Furthermore,  \begin{align*}
\partial_{\mathcal{A}}(y)&=\partial_{\mathcal{A}}[\frac{1}{|G|^{-1}}\sum\limits_{g\in G} g(b)]\\
&=\frac{1}{|G|^{-1}}\sum\limits_{g\in G}\partial_{\mathcal{A}}\circ g(b)\\
&=\frac{1}{|G|^{-1}}\sum\limits_{g\in G} g(\partial_{\mathcal{A}}(b))=0
\end{align*}
and
\begin{align*}
\lceil \iota(y)\rceil=\lceil y\rceil &=\lceil \frac{1}{|G|^{-1}}\sum\limits_{g\in G} g(b) \rceil\\
&=\lceil  \frac{1}{|G|^{-1}}\sum\limits_{g\in G} (z+\partial_{\mathcal{A}}(x_g)) \rceil\\
&=\lceil b+ \partial_{\mathcal{A}}(\frac{1}{|G|^{-1}} \sum\limits_{g\in G} x_g) \rceil= \lceil b\rceil.
\end{align*}
Then $H(\mathcal{A})^{H(G)}\subseteq  \mathrm{im}(H(\iota))$ and hence $H(\mathcal{A})^{H(G)}= \mathrm{im}(H(\iota))\cong H(\mathcal{A}^{G})$.
\end{proof}

\begin{thm}\label{fixedsub}
Let $\mathcal{A}$ be a homologically smooth locally finite connected cochain DG algebra such that $H(\mathcal{A})$ is a Noetherian AS-Gorenstein graded algebra.
If $G$ is a finite subgroup of $\mathrm{Aut}_{dg}(\mathcal{A})$ such that the homological determinant of each $\sigma \in G$ is $1$,  then the fixed DG subalgebra $\mathcal{A}^G$ is a Gorenstein DG algebra.
\end{thm}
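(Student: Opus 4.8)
The plan is to reduce the Gorenstein property of the DG algebra $\mathcal{A}^G$ to the corresponding graded statement for the fixed ring $H(\mathcal{A})^{H(G)}$, using Proposition \ref{same} as the bridge between the DG and graded homological determinants and \cite[Proposition 1]{Gam} as the lifting device in both directions. The two pillars I would lean on are: (i) a connected cochain DG algebra whose cohomology is Noetherian AS-Gorenstein is itself a Gorenstein DG algebra, and (ii) the graded analogue of Watanabe's theorem, guaranteeing that the invariant subring of a Noetherian AS-Gorenstein graded algebra under a finite group of graded automorphisms with trivial homological determinant is again AS-Gorenstein (cf. \cite{JoZ, KKZ1}).

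First I would note that the hypotheses here are exactly those of Proposition \ref{same}, so $\mathcal{A}$ is already a Gorenstein DG algebra and $\mathrm{Hdet}_{\mathcal{A}}$ is defined; moreover, for each $\sigma\in G$ we obtain $\mathrm{hdet}_{H(\mathcal{A})}H(\sigma)=\mathrm{Hdet}_{\mathcal{A}}\sigma=1$. By Proposition \ref{equal}, $H(G)=\{H(g)\mid g\in G\}$ is a finite subgroup of $\mathrm{Aut}_{gr}H(\mathcal{A})$, it acts with trivial homological determinant by the previous sentence, and $H(\mathcal{A}^G)=H(\mathcal{A})^{H(G)}$. Thus the entire group-theoretic hypothesis has been pushed down to the graded cohomology algebra.

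Next I would carry out the graded invariant theory. Since $H(\mathcal{A})$ is Noetherian and $\mathrm{char}\,k=0$, the order $\lvert H(G)\rvert$ is invertible in $k$, so the averaging (Reynolds) operator exhibits $H(\mathcal{A})^{H(G)}$ as a direct summand of $H(\mathcal{A})$ and $H(\mathcal{A})$ as module-finite over it; by standard invariant theory $H(\mathcal{A})^{H(G)}$ is then Noetherian. Feeding the trivial-determinant condition into the graded Watanabe-type theorem of \cite{JoZ, KKZ1}, I conclude that the fixed ring $H(\mathcal{A})^{H(G)}=H(\mathcal{A}^G)$ is AS-Gorenstein.

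Finally, by Proposition \ref{equal}(1) the object $\mathcal{A}^G$ is a connected cochain DG algebra, and by the step above $H(\mathcal{A}^G)$ is Noetherian AS-Gorenstein; applying \cite[Proposition 1]{Gam} once more yields that $\mathcal{A}^G$ is a Gorenstein DG algebra, as desired. I expect the genuine obstacle to be the middle step rather than the formal bookkeeping: the condition $\mathrm{Hdet}_{\mathcal{A}}\sigma=1$ is a priori a statement about the DG local cohomology module $H_{\frak{m}}(\mathcal{A})$, and it becomes usable in graded invariant theory only through the nontrivial identification $\mathrm{Hdet}_{\mathcal{A}}\sigma=\mathrm{hdet}_{H(\mathcal{A})}H(\sigma)$ of Proposition \ref{same}, whose proof itself rests on the local duality theorem (Theorem \ref{localdual}), on Theorem \ref{smoothgoren}, and on the spectral sequence comparing $R\Hom_{\mathcal{A}}(k,\mathcal{A})$ with $\mathrm{Ext}_{H(\mathcal{A})}(k,H(\mathcal{A}))$. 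All the difficulty is therefore concentrated in having that comparison available; once it is, Theorem \ref{fixedsub} follows by transporting the classical graded result across it.
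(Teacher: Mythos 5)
Your proposal is correct and follows essentially the same route as the paper: transfer the determinant condition via Proposition \ref{same}, identify $H(\mathcal{A}^G)=H(\mathcal{A})^{H(G)}$ via Proposition \ref{equal}, invoke the graded Watanabe-type theorem (the paper cites \cite[Theorem 3.3]{JoZ}) to get that the fixed ring is AS-Gorenstein, and lift back to the DG level with \cite[Proposition 1]{Gam}. Your additional remarks on Noetherianity of the invariant ring and on where the real difficulty lies (in Proposition \ref{same} itself) are sound but not part of the paper's short argument.
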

\begin{proof}
By Proposition \ref{same}, $\mathrm{hdet}_{H(\mathcal{A})}H(\sigma)=\mathrm{Hdet}_{\mathcal{A}}(\sigma)=1$ for any $\sigma\in G$. Hence $H(\mathcal{A})^{H(G)}$ is AS-Gorenstein by \cite[Theorem 3.3]{JoZ}. Since $H(\mathcal{A}^G)\cong H(\mathcal{A})^{H(G)}$ by
Proposition \ref{equal}, we conclude that $H(\mathcal{A}^G)$ is AS-Gorenstein. This implies that $\mathcal{A}^G$ is a Gorenstein DG algebra by
\cite[Proposition 1]{Gam}.
\end{proof}

DG down-up algebras are introduced and systematically studied in \cite{MHLX}. Recall that a connected cochain DG algebra $\mathcal{A}$ is called down-up if  $\mathcal{A}^{\#}$ is a graded down-up algebra $A(\alpha,\beta)$, which is a graded associative algebra generated by degree- one elements $x$ and $y$ over $k$ with the relations given by $$x^2y=\alpha xyx+\beta yx^2\quad\text{ and}\quad
 xy^2=\alpha yxy+\beta y^2x.$$ When $\beta\neq 0$, $A(\alpha,\beta)$ is Noetherian.
 By \cite[Theorem A]{MHLX}, the differential of a Noetherian DG down-up algebra can be described as follows:

 (a) $\partial_{\mathcal{A}}=0$ if $1+\alpha-\beta \neq 0$ or $1+\alpha-\beta =0, \beta^3\neq 1$;

(b) $\partial_{\mathcal{A}}$ is defined  by either
$$\begin{cases}
\partial_{\mathcal{A}}(x)=cy^2 \\
\partial_{\mathcal{A}}(y)=dx^2 \\
 c\cdot d=0,\,\, c, d\in k
\end{cases} \,\,
\text{or} \,\,\,\,
\begin{cases}
\partial_{\mathcal{A}}(x)= 2dx^2+cxy+cyx-\frac{c^2}{d}y^2\\
\partial_{\mathcal{A}}(y)= -\frac{d^2}{c}x^2 +dxy+dyx+ 2cy^2\\
c\in k^{\times}, d\in k^{\times}
\end{cases} $$ if  $1+\alpha-\beta =0, \beta^3=1$ and $\beta\neq 1$;

(c) $\partial_{\mathcal{A}}$ is defined by
$
\begin{cases}
\partial_{\mathcal{A}}(x)=c_1x^2 + c_2(xy+yx)+c_3y^2\\
\partial_{\mathcal{A}}(y)=d_1x^2+d_2(xy+yx)+d_3y^2\\
c_i, d_i \in k, i=1,2,3
\end{cases}$
if $\alpha =0$ and $\beta= 1$.

Therefore,  we can divide Noetherian
DG down-up algebras with non-trivial differentials into the
following three cases: $$
 \text{Case}\,1.\,\,  \mathcal{A}^{\#}=A(\xi-1,\xi), \quad   \text{Case}\,2. \,\, \mathcal{A}^{\#}=A(\xi^2-1,\xi^2), \quad
  \text{Case}\,3.\,\,  \mathcal{A}^{\#}=A(0,1), $$
 where $\xi$ is a fixed primitive cubic root of unity.  A Noetherian
 DG down-up algebra of the third case is not isomorphic to any Noetherian DG down-up algebras of the other two cases since we have $A(0,1)\not \cong A(\xi-1,\xi)$ and $A(0,1)\not \cong A(\xi^2-1,\xi^2)$ by \cite[Corollary 6.2]{BR}. By \cite[Theorem B]{MHLX},
the set of non-trivial Noetherian DG down-up algebras belonging to Case 1 or Case 2 can be classified into $3$ different isomorphism classes represented by
\begin{enumerate}
\item   $\mathcal{A}_1$ with $\mathcal{A}_1^{\#}=A(\xi-1,\xi)$ and $\begin{cases}
\partial_{\mathcal{A}_1}(x)=y^2 \\
\partial_{\mathcal{A}_1}(y)=0;
\end{cases}$
\item  $\mathcal{A}_2$ with $\mathcal{A}_2^{\#}=A(\xi-1,\xi)$ and $\begin{cases}
\partial_{\mathcal{A}_2}(x)=0 \\
\partial_{\mathcal{A}_2}(y)=x^2;
\end{cases}$
\item  $\mathcal{A}_3$ with $\mathcal{A}_3^{\#}=A(\xi-1,\xi)$ and $\begin{cases}
\partial_{\mathcal{A}_2}(x)=2x^2+xy+yx-y^2 \\
\partial_{\mathcal{A}_2}(y)=-x^2+xy+yx+2y^2.
\end{cases}$
\end{enumerate}
 By \cite[Proposition 5.5]{MHLX},  $$\begin{array}{lll}
 H(\mathcal{A}_1)&=&\frac{k\langle \lceil xy+yx\rceil, \lceil y\rceil\rangle}{\left(\begin{array}{ccc}
                                  \xi \lceil y\rceil \lceil xy+yx\rceil - \lceil xy+yx\rceil \lceil y\rceil \\
                                   \lceil y^2\rceil \\
                                    \end {array}\right)}, \\
 H(\mathcal{A}_2)&=& \frac{k\langle \lceil xy+yx\rceil, \lceil x\rceil\rangle}{\left(\begin{array}{ccc}
                                  \lceil x\rceil \lceil xy+yx\rceil - \xi \lceil xy+yx\rceil \lceil x\rceil \\
                                   \lceil x^2\rceil \\
                                    \end {array}\right)},\\
\text{and}\quad H(\mathcal{A}_3)&=&
k[\lceil(xy+yx)^3\rceil]
 \end{array}$$
are all AS-Gorenstein. Furthermore, $\mathcal{A}_1, \mathcal{A}_2$ and $\mathcal{A}_3$ are Calabi-Yau DG algebras by \cite[Proposition 6.1, Corollary 6.2 and Proposition 6.4]{MHLX}. For Case $3$,  $H(\mathcal{A})$ is AS-Gorenstein by Proposition $5.7$, and  $\mathcal{A}$ is Calabi-Yau by \cite[Theorem 6.11]{MHLX}.  Therefore, any DG down-up DG algebra $\mathcal{A}$ is homologically smooth and $H(\mathcal{A})$ is a Noetherian AS-Gorenstein graded algebra. Then we have the following corollary.
\begin{cor}\label{dgducase}
Let $\mathcal{A}$ be a Noetherian DG down-up algebra. If $G$ is a finite subgroup of $\mathrm{Aut}_{dg}(\mathcal{A})$ such that the homological determinant of each $\sigma \in G$ is $1$,  then the fixed DG subalgebra $\mathcal{A}^G$ is a Gorenstein DG algebra.
\end{cor}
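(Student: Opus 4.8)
The plan is to apply the main result, Theorem \ref{fixedsub}, directly to the class of Noetherian DG down-up algebras. The only thing requiring verification is that every such algebra $\mathcal{A}$ satisfies the hypotheses of that theorem, namely that $\mathcal{A}$ is homologically smooth, locally finite, and that $H(\mathcal{A})$ is a Noetherian AS-Gorenstein graded algebra. Once these are in place, the conclusion follows verbatim: for a finite subgroup $G \subseteq \mathrm{Aut}_{dg}(\mathcal{A})$ with $\mathrm{Hdet}_{\mathcal{A}}\sigma = 1$ for all $\sigma \in G$, the fixed DG subalgebra $\mathcal{A}^G$ is Gorenstein.

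First I would dispose of the trivial-differential case: when $\partial_{\mathcal{A}} = 0$, the algebra $\mathcal{A}$ is just the graded down-up algebra $A(\alpha,\beta)$ viewed as a DG algebra, and the required properties are classical facts about Noetherian graded down-up algebras. Next I would invoke the classification recalled in the excerpt just before the statement, which reduces the nontrivial-differential Noetherian case to the three isomorphism classes $\mathcal{A}_1, \mathcal{A}_2, \mathcal{A}_3$ (Cases 1 and 2) together with Case 3 where $\mathcal{A}^{\#} = A(0,1)$. For each of these the excerpt has already quoted the explicit computation of $H(\mathcal{A})$ from \cite[Proposition 5.5]{MHLX} and \cite[Proposition 5.7]{MHLX}, and has stated that each resulting cohomology algebra is AS-Gorenstein. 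Moreover, the cited results \cite[Proposition 6.1, Corollary 6.2, Proposition 6.4, Theorem 6.11]{MHLX} assert that each of these DG down-up algebras is Calabi-Yau, hence in particular homologically smooth. Local finiteness is automatic since $\mathcal{A}$ is Noetherian and each graded piece is finite dimensional.

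Thus the heart of the argument is simply the observation, already assembled in the paragraph preceding the corollary, that \emph{any} Noetherian DG down-up algebra $\mathcal{A}$ is homologically smooth and has $H(\mathcal{A})$ a Noetherian AS-Gorenstein graded algebra. I would state this as the key intermediate claim, citing the relevant results of \cite{MHLX} case by case, and then apply Theorem \ref{fixedsub} to conclude. There is essentially no obstacle here beyond bookkeeping: the substantive work has been carried out in \cite{MHLX} and recorded above, so the proof is a short verification-and-citation argument. The only point demanding mild care is confirming that the homological-smoothness input needed by Theorem \ref{fixedsub} is genuinely supplied in every case (including Case 3), which follows because the Calabi-Yau property implies the compactness of ${}_{\mathcal{A}^e}\mathcal{A}$ and hence homological smoothness.
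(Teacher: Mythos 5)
Your proposal is correct and takes essentially the same route as the paper: the paper's ``proof'' of Corollary \ref{dgducase} is exactly the paragraph preceding it, which verifies the hypotheses of Theorem \ref{fixedsub} case by case from the classification in \cite{MHLX} (Calabi-Yau, hence homologically smooth, and $H(\mathcal{A})$ Noetherian AS-Gorenstein) and then cites that theorem. If anything you are slightly more careful than the paper, which never explicitly treats the trivial-differential case; your observation that for $\partial_{\mathcal{A}}=0$ the algebra $A(\alpha,\beta)$ is AS-regular of global dimension $3$ (so its finite minimal graded free resolution of $k$ totalizes to a finite semi-free DG resolution, giving homological smoothness) closes that small gap.
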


In \cite{MXYA}, the notion of DG free algebra is introduced. Recall that a connected cochain DG algebra $\mathcal{A}$ is called DG free if $\mathcal{A}^{\#}$ is a free algebra $k\langle x_1,x_2,\cdots, x_n\rangle$ generated by degree $1$ elements. By \cite[Theorem 2.4]{MXYA}, $\partial_{A}$ is in one to one corresponce with a set of crisscross ordered n-tuples of $n \times n$
matrices. More precisely, there exists a crisscross ordered $n$-tuple $(M^1,M^2,\cdots, M^n)$ of $n\times n$ matrices such that $\partial_{\mathcal{A}}$ is defined by $$
\partial_{\mathcal{A}}(x_i)=(x_1,x_2,\cdots, x_n)M^i \left(
                         \begin{array}{c}
                          x_1 \\
                          x_2\\
                          \vdots\\
                          x_n\\
                         \end{array}
                       \right), \forall i\in \{1,2,\cdots, n\}.$$
 Here,  $(M^1,M^2,\cdots, M^n)$  with
$$M^i=(c^i_1,c^i_2,\cdots, c^i_n)=\left(
                         \begin{array}{c}
                        r^i_1 \\
                          r^i_2\\
                          \vdots\\
                          r^i_n\\
                         \end{array}
                       \right) $$
is called crisscross if
$$  \sum\limits_{l=1}^n[c_j^lr_l^i-c^i_lr_j^l]=(0)_{n\times n}, \forall i,j\in \{1,2,\cdots, n\}. $$ When $n=2$, we classify all isomorphism classes of DG free algebras. It is proved in \cite{MXYA} that all non-trivial DG free algebras generated by $2$ degree-one elements are Koszul Calabi-Yau DG algebras, which are obviously homologically smooth. Furthermore, their cohomology graded algebras were all Noetherian and AS-Gorenstein by \cite[Proposition 5.1]{MXYA}. Applying Theorem \ref{fixedsub}, we get the following corollary.

\begin{cor}\label{dgfreecase}
Let $\mathcal{A}$ be a non-trivial DG free algebra with $2$ degree-one generators. If $G$ is a finite subgroup of $\mathrm{Aut}_{dg}(\mathcal{A})$ such that the homological determinant of each $\sigma \in G$ is $1$,  then the fixed DG subalgebra $\mathcal{A}^G$ is a Gorenstein DG algebra.
\end{cor}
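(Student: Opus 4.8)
The plan is to obtain this corollary as an immediate instance of Theorem \ref{fixedsub}, so the whole task reduces to checking that a non-trivial DG free algebra $\mathcal{A}$ on two degree one generators satisfies the three standing hypotheses of that theorem: that $\mathcal{A}$ is homologically smooth, that $\mathcal{A}$ is locally finite, and that $H(\mathcal{A})$ is a Noetherian AS-Gorenstein graded algebra. Once these are in place, the conclusion follows word for word.

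First I would dispose of local finiteness by hand. Since $\mathcal{A}^{\#}=k\langle x_1,x_2\rangle$ with $|x_1|=|x_2|=1$, the component $\mathcal{A}^n$ is spanned by the length-$n$ words in $x_1,x_2$, so $\mathcal{A}^n=0$ for $n<0$, $\mathcal{A}^0=k$, and $\dim_k\mathcal{A}^n=2^n<\infty$ for all $n$; hence $\mathcal{A}$ is connected and locally finite. Next I would invoke the structural results of \cite{MXYA} recalled just before the statement: every non-trivial DG free algebra generated by two degree one elements is Koszul Calabi-Yau, and a Calabi-Yau DG algebra is compact as a DG $\mathcal{A}^e$-module, hence homologically smooth; moreover \cite[Proposition 5.1]{MXYA} guarantees that $H(\mathcal{A})$ is Noetherian and AS-Gorenstein. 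With all three hypotheses verified, I would apply Theorem \ref{fixedsub} directly: given a finite subgroup $G$ of $\mathrm{Aut}_{dg}(\mathcal{A})$ with $\mathrm{Hdet}_{\mathcal{A}}\sigma=1$ for every $\sigma\in G$, the theorem yields that $\mathcal{A}^G$ is a Gorenstein DG algebra.

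There is essentially no obstacle internal to this argument, since all the substantive content is packaged either into Theorem \ref{fixedsub} or into the classification and cohomology computations of \cite{MXYA}. The only point deserving care is the role of the \emph{non-trivial} hypothesis: this is precisely what places $\mathcal{A}$ in the Koszul Calabi-Yau range of \cite{MXYA} and forces $H(\mathcal{A})$ to be Noetherian AS-Gorenstein. For the trivial differential one would have $H(\mathcal{A})\cong k\langle x_1,x_2\rangle$, which is not Noetherian, so the hypotheses of Theorem \ref{fixedsub} would fail and the present argument would not apply. Thus the assumption of non-triviality is not merely technical but is exactly what makes the reduction to Theorem \ref{fixedsub} legitimate.
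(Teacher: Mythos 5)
Your proposal is correct and follows essentially the same route as the paper: the paper likewise deduces the corollary by citing \cite{MXYA} for the facts that a non-trivial DG free algebra on two degree one generators is Koszul Calabi--Yau (hence homologically smooth) with $H(\mathcal{A})$ Noetherian AS-Gorenstein by \cite[Proposition 5.1]{MXYA}, and then applies Theorem \ref{fixedsub}. Your explicit check of local finiteness and your remark on why the non-triviality hypothesis is needed are welcome additions but do not change the argument.
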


\section*{Acknowledgments}
X.-F. Mao is supported by the National Natural Science Foundation of
China (No. 11871326).

\def\refname{References}

\end{document}